\newtheorem{lemma}{Lemma}
\newtheorem{prop}[lemma]{Proposition}
\newtheorem{proposition}[lemma]{Proposition}
\newtheorem{theorem}[lemma]{Theorem}
\theoremstyle{definition}
\newtheorem{remark}[lemma]{Remark}
\def\tgamma{\tilde \gamma}
\def\f{{\mathcal F}}
\def\af{{\rm af}}
\def\Gr{{\rm Gr}}
\def\Spec{{\rm Spec}}
\def\A{{\mathcal A}}
\def\la{\lambda}
\def\C{{\mathbb C}}
\def\R{{\mathbb R}}
\def\i{{\mathbf{i}}}
\def\j{{\mathbf{j}}}
\def\hw{{\mathrm{hw}}}
\def\Sym{{\mathrm{Sym}}}
\def\h{{\mathfrak h}}
\def\n{{\mathfrak n}}
\def\b{{\mathfrak b}}
\def\g{{\mathfrak g}}
\def\cG{{\mathring G}}
\def\pt{{\rm pt}}
\def\ep{{\varepsilon}}
\def\ph{{\varphi}}
\def\pr{{\rm pr}}
\def\ad{{\rm ad}}
\def\diag{{\rm diag}}
\def\Fl{{\mathcal Fl}}
\def\X{{\mathcal X}}
\def\Y{{\mathcal Y}}
\def\Z{{\mathbb Z}}
\def\op{{\rm op}}
\def\trop{{\rm trop}}
\def\O{{\mathcal O}}
\def\D{{\mathcal D}}
\def\F{{\mathcal F}}
\def\Re{{\rm Re}}
\def\L{{\mathcal L}}
\newcommand\ip[1]{\langle #1 \rangle}
\newcommand\defn[1]{{\it #1}}
\numberwithin{lemma}{section}
\begin{document}
\author{Thomas Lam}\address{Department of Mathematics, University of Michigan,
2074 East Hall, 530 Church Street, Ann Arbor, MI 48109-1043, USA}
\email{tfylam@umich.edu}\thanks{T.L. was supported by NSF grant DMS-1160726, and by a
Sloan Fellowship.}
\title{Whittaker functions, geometric crystals, and quantum Schubert calculus}
\begin{abstract}
This mostly expository article explores recent developments in the relations between the three objects in the title from an algebro-combinatorial perspective.  

We prove a formula for Whittaker functions of a real semisimple group as an integral over a geometric crystal in the sense of Berenstein-Kazhdan.  We explain the connections of this formula to the program of mirror symmetry of flag varieties developed by Givental and Rietsch; in particular, the integral formula proves the equivariant version of Rietsch's mirror symmetry conjecture. We also explain the idea that Whittaker functions should be thought of as geometric analogues of irreducible characters of finite-dimensional representations.
\end{abstract}
\maketitle

The heart of this article is a proof that certain integrals over geometric crystals are archimedean Whittaker functions, or equivalently, eigenfunctions of the quantum Toda lattice.  

A recent new development \cite{BBF1,BBF2} in the study of automorphic forms involves expressing multiple Dirichlet series and $p$-adic (metaplectic) Whittaker functions as sums over Kashiwara's crystals.  This motivated me to observe\footnote{Konni Rietsch has pointed out that this observation had also been made by Masaki Kashiwara.} that Rietsch's mirror-symmetric solution to the quantum Toda lattice \cite{Rie2} could be expressed as an integral over Berenstein and Kazhdan's geometric crystals \cite{BK1,BK2}.  This observation was also made in Chhaibi's recent thesis \cite{Chh} who developed a robust probabilistic interpretation of such integrals.

$$
\begin{tikzcd}
{ } & \text{Whittaker functions} \arrow{ld}[swap]{\text{integrals over}}  \arrow{rd}{\text{solutions to quantum $D$-module}}& \\
\text{Geometric crystals} \arrow[leftrightarrow]{rr}[swap]{\text{mirrors}} &  & \text{Flag variety}
\end{tikzcd}
$$

\section{Introduction}
\subsection{Whittaker functions}
The original Whittaker functions $\psi(z)$ are solutions to the {\it Whittaker differential equation}:
$$
\frac{d^2 \psi}{d z^2} + \frac{d \psi}{dz} + \left(\frac{k}{z} + \frac{\frac{1}{4}-m^2}{z^2} \right)\psi = 0.
$$
Jacquet then introduced Whittaker functions for reductive groups over local fields, with the original Whittaker function corresponding to the case of $SL_2(\R)$.  Jacquet's Whittaker functions play an important role in the study of automorphic forms and automorphic representations.

Kostant \cite{Kos:Whittaker} studied the Whittaker functions of real groups in detail, and essentially showed that they are the eigenfunctions of the quantum Toda lattice.  The quantum Toda lattice is a quantum integrable system with quantum Hamiltonian
$$
H =\frac{1}{2} \Delta - \sum_{i \in I}\alpha_i(t).
$$
We take this as the definition of Whittaker functions in our paper.

\subsection{Mirror symmetry for flag varieties}
For a sufficiently nice complex algebraic variety $M$ one can associate a quantum deformation $QH^*(M,\C)$ of the cohomology ring $H^*(M,\C)$.  The structure constants of this {\it quantum cohomology ring} are called {\it Gromov-Witten invariants}, and are obtained by enumerating rational curves in $M$.

Let $G$ be a complex semisimple algebraic group and $G/B$ be its flag variety.  Givental-Kim \cite{GK} initiated the study of the quantum cohomology ring $QH^*(G/B,\C)$, and Kim \cite{Kim} then proved that $QH^*(G/B,\C)$ was isomorphic to the ring of functions on the nilpotent leaf of the Toda lattice, the classical integrable system which is the quasi-clasical limit of the quantum Toda lattice.  These developments have led to a growing and vibrant subject of ``quantum Schubert calculus''.

At the heart of Givental's approach to quantum cohomology is an integrable system, called the quantum $D$-module.  The commutativity of this system of differential equations follows from deep, but general, results in Gromov-Witten theory.  Givental conjectured that solutions of the quantum $D$-module could be constructed as integrals over a (conjectural) mirror family.  

Kim showed that the quantum $D$-module of the flag variety is exactly the quantum Toda lattice.  So for the case of flag varieties, Givental's conjecture predicts integral formulae for Whittaker functions.  Givental \cite{Giv} explicitly constructed a mirror family and thus integral formula for $G = SL_n$ Whittaker functions.  This integral formula was then studied in detail by Gerasimov-Kharchev-Lebedev-Oblezin \cite{GKLO}, and a series of works by Gerasimov-Lebedev-Oblezin, including \cite{GLO,GLO1}, who extended the formula to classical groups.  

In \cite{Rie1}, Rietsch proposed a general Lie-theoretic construction of the mirror family, and proved it for the ``non-equivariant'' case in \cite{Rie2}.

\subsection{Geometric crystals}
Developments in the representation theory of quantum groups led Kashiwara to define a notion of a {\it crystal graph}: a combinatorial model $B$ for an irreducible representation $V$ of a complex simple Lie algebra $\g$.  The vertices of the graph represent basis vectors of $V$, while the action of $\g$ is encoded in colored edges.  

Berenstein and Zelevinsky \cite{BZ} studied Kashiwara's crystals by first parametrizing crystal basis elements using a string of integers, and then representing the crystal action as certain piecewise-linear formulae.  At the center of their approach is the relation between parametrizations of the closely related canonical bases and parametrizations of totally positive elements, observed by Lusztig \cite{Lus}.

Berenstein and Kazhdan \cite{BK1,BK2} then defined {\it geometric crystals}: these are irreducible varieties $Y$ equipped with rational actions $e_i: \C^* \times Y \to Y$, satisfying certain relations.  Berenstein and Kazhdan showed that geometric crystals tropicalize to combinatorial crystals: the rational actions $e_i$ tropicalize to the piecewise-linear crystal actions, and the finite set underlying a crystal is cut out of $\Z^\ell$ by the tropicalization of a {\it decoration} $\f: Y \to \C$.

\subsection{Whittaker functions as integrals over geometric crystals}
Let $G$ be a complex semisimple algebraic group with opposite Borels $B, B_-$, torus $T = B \cap B_-$ and let $U$ be the unipotent subgroup of $B$.  This article revolves around the following integral formula:
\begin{theorem}\label{thm:mainintro}
Let $\la: T \to \C^*$ be a character.  The integral
$$
\psi_\la(t) = \int_{(X_t)_{> 0}} \la(\gamma(x)) e^{-\f(x)}\omega_t 
$$
as a function of $t \in T_{>0}$ is a Whittaker function for $G$ with infinitesimal character $\xi_{\la - \rho}$.
\end{theorem}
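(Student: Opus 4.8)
The plan is to verify directly that $\psi_\la$ is a joint eigenfunction of the commuting quantum Toda Hamiltonians $H_1 = H, H_2, \dots, H_r$ (of which the operator $H$ displayed above is the quadratic one), with eigenvalues prescribed by the infinitesimal character $\xi_{\la-\rho}$. Since the higher Hamiltonians are generated by $H$ together with the Weyl-invariant differential operators coming from the center of $U(\g)$, I would organize the argument around the single identity $\bigl(H - (\text{eigenvalue})\bigr)\psi_\la = 0$ and then explain that the same mechanism produces all the higher equations at once.

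The first step is to unpack the three ingredients of the integrand on the positive fiber $(X_t)_{>0}$: the weight factor $\la(\gamma(x))$, the decoration $e^{-\f(x)}$, and the measure $\omega_t$. The crucial structural input is that the weight map $\gamma$ intertwines the $T$-action on the geometric crystal with multiplication on the target $T$, so that differentiating $\la(\gamma(x))$ in the variables of the fiber $X_t = \gamma^{-1}(t)$ reduces to applying the infinitesimal crystal action. I would then express $H$ applied to the integrand as a combination of first- and second-order operators induced by this action, the aim being to rewrite $(H\psi_\la)(t)$ as a constant (the eigenvalue attached to $\xi_{\la-\rho}$) times $\psi_\la(t)$, plus the integral of an exact form, i.e.\ a total derivative, along the fiber.

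The heart of the proof, and the main obstacle, is this key identity: applying the Toda operators to the integrand must yield the eigenvalue times the integrand \emph{modulo exact forms}. This is the geometric-crystal incarnation of the fact that the Landau-Ginzburg superpotential $\f$ generates the quantum $D$-module, and it rests on the defining compatibility of the Berenstein-Kazhdan decoration $\f$ with the crystal operations $e_i$ --- concretely, that the derivatives of $\f$ reproduce the simple-root potentials $\alpha_i(t)$ appearing in $H$. Establishing this amounts to a differential-geometric computation on $Y$ in the explicit positive parametrization, followed by checking that the exact form integrates to zero. The vanishing of the boundary contribution is the delicate analytic point: on the totally positive locus $\f$ tends to $+\infty$ along the boundary of the cell, so $e^{-\f}$ decays and kills the boundary terms, but this (and the convergence of the integral itself) requires genuine control of $\f$ near $\partial (X_t)_{>0}$.

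Finally, I would identify the eigenvalue with $\xi_{\la-\rho}$. The $\rho$-shift, rather than $\la$ itself, should enter through the measure $\omega_t$: as in Harish-Chandra's $c$-function computations, the half-sum of positive roots appears via the modular/Jacobian factor relating $\omega_t$ to the invariant measure on the fiber. Matching this shift with the quadratic eigenvalue of the previous step pins down the infinitesimal character as exactly $\xi_{\la-\rho}$. An alternative route I would keep in reserve is to recognize the integral as a matrix coefficient of a principal series representation paired against a Whittaker vector in the sense of Kostant and Jacquet, so that the eigenfunction property is automatic from Schur's lemma; the cost is then to match the geometric-crystal integrand with the classical Whittaker integral through the positive parametrization.
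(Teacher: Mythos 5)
Your primary route has a genuine gap at its very first step: the claim that the higher Toda Hamiltonians are ``generated by $H$'' and that the single identity $(H-c)\psi_\la=0$ will ``produce all the higher equations at once'' is false. The operators $H_2,\dots,H_r$ commute with $H$ but are not determined by it in any way that transfers eigenfunction properties: for rank $r\ge 2$ the solution space of the single second-order PDE $H\psi=c\psi$ is infinite-dimensional, whereas the joint eigenspace of the whole lattice is finite-dimensional, so an eigenfunction of $H$ alone need not be an eigenfunction of the other $H_i$. Consequently, even if you completed the hard ``eigenvalue modulo exact forms'' computation for $H$ (a Givental/Rietsch-type calculation that your proposal defers rather than performs), you would only have proved the statement for the quantum Toda Hamiltonian. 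This is precisely the deficiency the paper attributes to Chhaibi's approach, and avoiding it is one of the paper's stated goals; a direct attack on the full lattice would require explicit formulas for all the $H_i$, which are not available in usable form in general type.

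The one-sentence ``alternative route kept in reserve'' at the end of your proposal is in fact the paper's actual proof, and it is what closes this gap, so it must be promoted to the main argument rather than held in reserve. Concretely, the paper realizes the integral (up to a $\rho$-twist and $t\mapsto t^{-1}$) as a matrix coefficient $t\mapsto \rho(t)\ip{f_-,\,t\cdot f_+}$, where $f_+(u)=e^{-\chi(u)}$ and $f_-(u)=\nu(\tgamma(u))e^{-\chi(\tau(u))}$ are explicit $\n_+$- and $\n_-$-Whittaker vectors in the principal-series-type modules $W_\mu$, $W_\nu$ with $\mu+\nu=-2\rho$, and where the $\g$-invariant pairing is $\ip{f,f'}=\int_{U^{w_0}_{>0}} f\,f'\,\rho(\tgamma(u))\,\omega_U$; the $\g$-invariance of this pairing is itself proved by a Stokes/decay argument of the kind you sketch, using that $\rho(\tgamma(u))\,\omega_U$ is the $U$-invariant meromorphic top form on the flag variety and that $e^{-\f}$ decays at the boundary of the positive cell. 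Once this identification is made, the eigenfunction property for the \emph{entire} lattice is automatic, because every $z\in Z(\g)$ acts on $W_\mu$ by a scalar and can be moved across the invariant pairing (this is your ``Schur's lemma'' remark, made precise by Proposition \ref{P:Toda}); the $\rho$-shift in the infinitesimal character then comes out of the Harish-Chandra homomorphism and the normalization $\mu+\nu=-2\rho$, not from a Jacobian heuristic about the measure. The remaining work is then (i) constructing $f_\pm$ and the pairing, and (ii) the change of variables $x=t^{-1}u\bar w_0\tau(u)$ identifying the matrix coefficient with the crystal integral --- not the exact-form identity for $H$.
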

Here $X_t = B_- \cap Ut \bar w_0U$ is the geometric crystal with highest weight $t$, the functions $\f: X_t \to \C$ and $\gamma: X_t \to T$ are the decoration and weight functions respectively, the form $\omega_t$ is a holomorphic form induced by a toric chart on $X_t$, and ``$>0$'' indicates totally positive parts.  The weight function $\gamma: X_t \to T$ tropicalizes to the usual weight function of a crystal.

In the mirror symmetry/quantum cohomology literature, the decoration is called the ``superpotential''.

We give an essentially self contained proof of Theorem \ref{thm:mainintro}, modulo issues of converegence of this integral, referring to \cite{Chh,Rie2} for such issues.  

Our proof follows that of Rietsch \cite{Rie2}, which in turn is motivated by work of Gerasimov-Kharchev-Lebedev-Oblezin \cite{GKLO}.  Our work improves on the existing literature in the following way.  Compared to Rietsch's work, we now allow $\la$ to be arbitrary: Rietsch deals with the case $\la = 1$.  In other words, we extend Rietsch's proof to the ``equivariant" case. We have also made an effort to formulate the proof within the theory of geometric crystals, hopefully clarifying many calculations.

Compared to Chhaibi's approach \cite{Chh}, our approach has the advantage that it proves that the integral is a solution to the whole quantum Toda lattice, rather than just the quantum Toda Hamiltonian.  We have also made an effort to make the proof as algebraic as possible, which may appeal to some readers.  Though I must say that the probabilistic interpretation of this integral is also extremely attractive!  Since Chhaibi's thesis is so comprehensive, we have basically omitted mention of the probabilistic viewpoint from our treatment.

One of the pleasant surprises (for me) is the naturality of the holomorphic top-form $\omega_t$.  When $X_t$ is embedded into the flag variety it can be identified with the open Richardson variety $R_{1,w_0} \subset G/B_-$.  The form $\omega_t$ is the (unique, up to scalar) meromorphic differential form on $G/B_-$ with simple poles exactly along the boundary $\partial R_{1,w_0}$, which is the union of Schubert and opposite Schubert divisors.  With Knutson and Speyer, we studied (the inverses of) such forms in the setting of Frobenius splittings \cite{KLS}.  This also indicates that the geometric crystal can be thought of as an open Calabi-Yau variety sitting in the flag variety.  It may thus be most natural to formulate Theorem \ref{thm:mainintro} as an integral over the flag variety, where the contribution of the integral over the boundary is zero.  We remark that the open Richardson varieties $R_{1,w_0}$ appear to play a special role in Lian-Song-Yau's study of period integrals on the flag variety where they are candidates for a {\it large complex structure limit point}; see Huang-Lian-Zhu \cite[Section 6]{HLZ}.

There are many potential generalizations to pursue, such as extensions to more general geometric crystals (especially the parabolic ones), but we have decided to focus on this one particular geometric crystal.

\subsection{Geometric analogues of Schur functions and geometric RSK}
The irreducible characters of $\g$ are weight generating functions of crystal graphs.  Theorem \ref{thm:mainintro} is thus the geometric analogue of this.  

For the case $G = SL_n$ this analogy is especially potent, and Whittaker functions behave like geometric analogues of Schur functions in many ways.  There is a geometric analogue of the Robinson-Schensted-Knuth bijection, introduced by Kirillov \cite{Kir}, and developed further by Noumi-Yamada \cite{NY}.  Instead of a bijection from matrices to tableaux, the geometric RSK bijection is a map $\R_{>0}^{nm} \to \R_{>0}^{nm}$ which tropicalizes to RSK.  

As explored in the works of Corwin, O'Connell, Sepp\"{a}l\"{a}inen, and Zygouras \cite{COSZ}, and O'Connell, Sepp\"{a}l\"{a}inen, and Zygouras \cite{OSZ}, the geometric RSK bijection is most interesting when suitable measures are put on $\R_{>0}^{nm}$.  This bijection then has applications to the theory of random directed polymers.  There are many works of O'Connell exploring these developements, see in particular \cite{O,O2}.  A recent ambitious generalization of these random processes is Borodin and Corwin's Macdonald processes \cite{BC}.

The geometric RSK bijection leads to a Cauchy identity for Whittaker functions \cite{COSZ}, and presumably also gives rise to the Pieri-like formula in \cite{GLO}.

\subsection{Organization}
In Section \ref{sec:background} we introduce some background notation, and also formulate some of Berenstein-Zelevinsky's work on parametrizations of subvarieties of complex semisimple algebraic groups.  In Section \ref{sec:geomcrystal} we give a condensed introduction to Berenstein-Kazhdan's geometric crystals.  For brevity, we have avoided unipotent crystals and unipotent bicrystals, but have given an example of how to tropicalize to obtain a combinatorial crystal.  In Section \ref{sec:whittaker}, we give a mostly self-contained introduction to the quantum Toda lattice, and explain how it arises from Whittaker modules.  Here I partly follow Etingof's article \cite{Eti}.  Section \ref{sec:integral} gives the proof of Theorem \ref{thm:mainintro}.  In Section \ref{sec:schur}, we write down the integral explicitly in type $A$ in terms of Gelfand-Tsetlin patterns.  We also state some Whittaker function identities which are geometric analogues of the Cauchy and Pieri identities for Schur functions.  In Section \ref{sec:mirror}, we connect the story back to quantum Schubert calculus, which is one of our (and Rietsch's) main motivations.

\medskip
{\bf Acknowledgements.}  
I have learnt a lot from Konni Rietsch on this topic.   I am also very grateful to Sasha Braverman and Reda Chhaibi for many helpful discussions while visiting ICERM.  Finally, we thank Ben Brubaker, Dan Bump, and Sol Friedberg for interesting discussions related to Whittaker functions.  We also thank Ben Brubaker, Ivan Corwin, and Viswambhara Makam for a number of comments on an earlier version.

This article is loosely based upon two talks I gave during the ICERM semester program ``Automorphic Forms, Combinatorial Representation Theory and Multiple Dirichlet Series".  I thank the organizers and ICERM for inviting me.    I am also grateful to the organizers of the International summer school and conference on Schubert calculus in Osaka 2012 for allowing me to write this article for the proceedings of the conference.

\section{Background on semisimple groups}\label{sec:background}
This section follows Berenstein and Zelevinsky \cite{BZ} and Berenstein and Kazhdan \cite{BK2}.
\subsection{Notations}

Let $G$ be a semisimple complex algebraic group with Dynkin diagram $I$.  We pick opposite Borels $B, B_-$ and set $T = B \cap B_-$.  Write $U$ and $U_-$ for the unipotent radicals of $B$ and $B_-$.  For each $i \in I$ we fix a homomorphism $\phi_i: SL_2 \to G$, and define
$$
x_i(a) = \phi_i \left(\begin{array}{cc} 1 & a \\0&1 \end{array}\right) \qquad y_i(a) = \phi_i\left(\begin{array}{cc} 1 & 0 \\a&1 \end{array}\right)  
$$
$$
\alpha_i^\vee(a) =  \phi_i\left(\begin{array}{cc} a & 0 \\0&a^{-1} \end{array}\right) \qquad x_{-i}(a) = \phi_i \left(\begin{array}{cc} a^{-1} & 0 \\1&a \end{array}\right)
$$
so that $x_i(a) \in B$, $y_i(a)\in B_-$, and $\alpha_i^\vee(a) \in T$.  They are related by the equality $x_{-i}(a) =y_i(a)\alpha^\vee_i(a^{-1})$.  For example for $G = SL_4$, we would have
$$
x_2(a) = \left(\begin{array}{cccc} 1 & && \\&1&a&\\&&1& \\&&&1\end{array}\right) 
\qquad \text{and} \qquad
x_{-3}(a) = \left(\begin{array}{cccc} 1 &&& \\&1&&\\&&a^{-1}& \\&&1&a\end{array}\right) 
$$

%\fixit{Need this?}Following \cite{BZ}, define the positive inverse anti-automorphism $\iota: G \to G$ by
%$$
%\iota(x_i(a)) = x_i(a) \qquad \iota(t) = t^{-1} \qquad \iota(y_i(a)) = y_i(a)
%$$
%for $i \in I$ and $t \in T$.

% Let $g \mapsto g^T$ denote the transpose anti-automorphism of $G$, given on generators by $x_i(a)^T = y_i(a)$ and $(\alpha_i^\vee(a))^T = \alpha_i^\vee(a)$.  

Define projections $\pi^+:B_- \cdot U \to U$ and $\pi^-:B_- \cdot U \to U$ by 
$$
\pi^+(bu) = u \qquad \pi^-(bu) = b.
$$

For each $i \in I$ we also have a character $\alpha_i: T \to \C^*$, which we will often think of as characters on $B$ or $B_-$ via the quotient maps $B \to B/U \simeq T$ and $B_- \to B_-/U_- \simeq T$.  We have $\alpha_j(\alpha_i^\vee(c)) = c^{a_{ij}}$ where $a_{ij} = \ip{\alpha_j,\alpha_i^\vee}$ is the Cartan matrix of $G$.  We denote the set of roots $\alpha$ of $T$ by $R^+$, and the corresponding coroots are denoted $\alpha^\vee$.

Also define $\bar s_i = x_i(-1)y_i(1)x_i(-1)$.  Since the $\bar s_i$ satisfy the braid relations, this gives a distinguished lifting of the $w \to \bar w$ of the Weyl group $W$ to $G$.  Since we have fixed such a lifting, we will often abuse notation by writing $w$ when we mean $\bar w$.  Denote by $w_0$ the longest element of $W$.

For each $i \in I$ we have an elementary character $\chi_i: U \to \C$ given by
$$
\chi_i(x_j(a)) = \delta_{ij} \cdot a.
$$
Similarly define $\chi_i^-: U_- \to \C$.  These characters are extended to rational functions on $G$ via
$$
\chi^-_i(u_-t u_+) = \chi_i^-(u_-) \qquad \text{and} \qquad \chi_i^+(u_-tu_+)=\chi_i(u_+)
$$
for $u_\pm \in U_\pm$ and $t \in T$.  We let $\chi = \sum_i \chi_i$.  For example, if $G = SL_4$
$$
\chi\left(\begin{array}{cccc} 1 &a &*&* \\&1&b&*\\&&1&c\\&&&1\end{array}\right) = a + b + c.
$$

\subsection{Relations for $x_i$ and $y_i$}
We recall some standard relations that the one-parameter subgroups $x_i(a)$ and $y_i(a)$ satisfy.

\begin{prop}\label{prop:xy}
We have
\begin{align}\label{eq:xy}
x_i(a) y_i(a') &= y_i\left(\frac{a'}{1+aa'}\right) \alpha_i^\vee(1+aa') x_i\left(\frac{a}{1+aa'}\right) \\
y_i(a) x_i(a') &= x_i\left(\frac{a'}{1+aa'}\right) \alpha_i^\vee\left(\frac{1}{1+aa'}\right) y_i\left(\frac{a}{1+aa'}\right)
\end{align}
and $x_i$ and $y_j$ commute for $i \neq j$.
\end{prop}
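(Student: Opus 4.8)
The plan is to reduce everything to the group $SL_2$, exploiting that each of $x_i(a)$, $y_i(a)$, and $\alpha_i^\vee(a)$ is by definition the image under the fixed homomorphism $\phi_i\colon SL_2 \to G$ of an explicit $2\times 2$ matrix. Since $\phi_i$ is a group homomorphism, any identity among products of these one-parameter subgroups holds in $G$ the moment the corresponding identity holds inside $SL_2$. The first two relations thereby become elementary statements about Gauss (lower--diagonal--upper) factorizations of $2\times 2$ matrices, which I would verify by direct multiplication.

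Concretely, for the first relation I would compute the left-hand product inside $SL_2$,
\[
\begin{pmatrix} 1 & a \\ 0 & 1\end{pmatrix}\begin{pmatrix} 1 & 0 \\ a' & 1\end{pmatrix} = \begin{pmatrix} 1+aa' & a \\ a' & 1\end{pmatrix},
\]
and then check that this factors as
\[
\begin{pmatrix} 1 & 0 \\ \frac{a'}{1+aa'} & 1\end{pmatrix}\begin{pmatrix} 1+aa' & 0 \\ 0 & \frac{1}{1+aa'}\end{pmatrix}\begin{pmatrix} 1 & \frac{a}{1+aa'} \\ 0 & 1\end{pmatrix}.
\]
The three off-diagonal and diagonal entries match by inspection, and the only real check is the bottom-right entry, which comes out to $\frac{a'}{1+aa'}\cdot(1+aa')\cdot\frac{a}{1+aa'} + \frac{1}{1+aa'} = \frac{aa'+1}{1+aa'} = 1$. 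Applying $\phi_i$ to the three factors yields exactly $y_i(\tfrac{a'}{1+aa'})\,\alpha_i^\vee(1+aa')\,x_i(\tfrac{a}{1+aa'})$, which is the first identity. The second relation is entirely parallel: I would multiply $y_i(a)x_i(a')$ to obtain $\begin{pmatrix} 1 & a' \\ a & 1+aa'\end{pmatrix}$ and read off the upper--diagonal--lower factorization, with the top-left entry furnishing the single consistency check. Alternatively, the second identity can be deduced from the first by applying the automorphism $g \mapsto (g^{-1})^{T}$ of $SL_2$ (which interchanges the two triangular subgroups and inverts the diagonal torus) together with the sign change $a,a'\mapsto -a,-a'$, noting $1+(-a)(-a')=1+aa'$.

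For the commutation of $x_i$ and $y_j$ with $i \neq j$, the two elements no longer lie in a common copy of $SL_2$, so the reduction above does not apply; this is the one step that needs genuine Lie-theoretic input and is the main, if mild, obstacle. Here $x_i(a)$ lies in the root subgroup attached to the positive simple root $\alpha_i$, while $y_j(a')$ lies in the root subgroup attached to $-\alpha_j$. By the Chevalley commutator formula, $[x_i(a), y_j(a')]$ is a product of one-parameter elements indexed by the roots of the form $m\alpha_i - n\alpha_j$ with $m,n$ positive integers. But every root of $G$ expands in the simple root basis with coefficients all of one sign, whereas for $i\neq j$ and $m,n>0$ the combination $m\alpha_i - n\alpha_j$ has the coefficient of $\alpha_i$ positive and that of $\alpha_j$ negative, so it is never a root. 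Hence the product is empty, the commutator is trivial, and $x_i(a)$ and $y_j(a')$ commute. I would emphasize that this argument applies uniformly, including to adjacent nodes $i,j$ where the commutation might at first seem surprising.
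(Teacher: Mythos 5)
Your proof is correct: both $SL_2$ factorizations multiply out exactly as you claim (and the alternative derivation of the second identity via $g \mapsto (g^{-1})^T$ also works), and the Chevalley-commutator argument for $i \neq j$ --- no root of the form $m\alpha_i - n\alpha_j$ with $m,n>0$ can exist, since every root has simple-root coefficients of a single sign --- is the right and complete justification of the commutation. For the record, the paper states Proposition \ref{prop:xy} with no proof at all, recalling it as a standard relation, so your write-up supplies precisely the standard verification the paper takes for granted, in the same $SL_2$-reduction spirit the paper itself uses elsewhere (e.g.\ in the proof of Lemma \ref{lem:Uaction}, where the computation is likewise reduced to $SL_2$).
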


By a positive rational function in the following we mean a ratio of polynomials with nonnegative coefficients.

\begin{prop}[\cite{BZ}]\label{prop:xx}
Let $i \neq j \in I$ and suppose $(s_i s_j)^{m} = 1$ where $m = 2,3,4,6$.  Then as rational morphisms,
\begin{align*}
x_i(a)x_j(b) &= x_j(b) x_i(a) & \mbox{if $m = 2$}\\
x_i(a)x_j(b)x_i(c) &= x_j(bc/(a+c)) x_i(a+c) x_j(ab/(a+c)) & \mbox{if $m = 3$} \\
x_i(a_1)x_j(a_2)x_i(a_3)x_j(a_4) &= x_j(b_1) x_i(b_2)x_j(b_3) x_i(b_4) & \mbox{if $m = 4$} \\
x_i(a_1)x_j(a_2)x_i(a_3)x_j(a_4)x_i(a_5)x_j(a_6) &= x_j(c_1) x_i(c_2)x_j(c_3) x_i(c_4)x_j(c_5) x_i(c_6) & \mbox{if $m = 6$} 
\end{align*}
where $b_1,b_2,b_3,b_4$ (resp. $c_1,c_2,\ldots,c_6$) are positive rational functions of $a_1,a_2,a_3,a_4$ (resp. $a_1,a_2,\ldots,a_6$) depending only on the Cartan matrix entries $a_{ij}$ and $a_{ji}$.  Similar relations hold for $x_{-i}$, and for $m = 2,3$ they are
\begin{align*}
x_{-i}(a)x_{-j}(b) &= x_{-j}(b) x_{-i}(a) & \mbox{if $m = 2$}\\
x_{-i}(a)x_{-j}(b)x_{-i}(c) &= x_{-j}(bc/(b+ac)) x_{-i}(ac) x_{-j}(a+b/c) & \mbox{if $m = 3$}.
\end{align*}
\end{prop}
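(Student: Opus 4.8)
The plan is to reduce each identity to a computation inside the rank-two subgroup attached to the pair $\{i,j\}$. Fix $i \neq j$ and let $G_{ij} = \langle \phi_i(SL_2), \phi_j(SL_2)\rangle \subseteq G$. Its root system is the rank-two subsystem of $R^+$ spanned by $\alpha_i$ and $\alpha_j$, and up to isomorphism this subsystem is determined by the Cartan integers $a_{ij}, a_{ji}$: it is of type $A_1 \times A_1$, $A_2$, $B_2$, or $G_2$ according as $a_{ij}a_{ji} = 0,1,2,3$, that is, according as $m = 2,3,4,6$. Every factor appearing on either side of the asserted identities lies in $G_{ij}$, and the maps $x_i, x_j$ (and $x_{-i}, x_{-j}$) depend only on $\phi_i, \phi_j$, hence only on this Cartan data and not on the ambient $G$. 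So it suffices to prove each identity inside $G_{ij}$, where it becomes a statement about one of four explicit rank-two groups.

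Before computing, I would explain why the identities must exist. In the Weyl group of $G_{ij}$ the longest element $w_0$ has length $m$, equal to the number of positive roots, and it has exactly the two reduced words $(i,j,i,\dots)$ and $(j,i,j,\dots)$, each alternating of length $m$. For a reduced word the product of the corresponding root subgroups is an isomorphism of varieties onto the maximal unipotent $U_{ij}$ of $G_{ij}$ (Lusztig \cite{Lus}), so
$$(a_1,\dots,a_m)\longmapsto x_i(a_1)x_j(a_2)\cdots \qquad\text{and}\qquad (b_1,\dots,b_m)\longmapsto x_j(b_1)x_i(b_2)\cdots$$
are two such isomorphisms $\C^m \to U_{ij}$. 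Composing one with the inverse of the other yields a unique birational transition map $(a_1,\dots,a_m)\mapsto(b_1,\dots,b_m)$, which is precisely the content of the Proposition; in particular the $b_k$ (resp.\ $c_k$) are rational functions of the $a_k$ depending only on $G_{ij}$, hence only on $a_{ij}$ and $a_{ji}$.

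It then remains to compute the transition maps and check positivity. For $m=2$ the roots $\alpha_i, \alpha_j$ are orthogonal, $\alpha_i+\alpha_j$ is not a root, and the Chevalley commutator formula gives $x_i(a)x_j(b)=x_j(b)x_i(a)$, which is the last assertion of Proposition \ref{prop:xy}. For $m=3$, realizing $x_i, x_j$ as the two elementary upper-triangular unipotents of $SL_3$, a direct product gives
$$x_i(a)x_j(b)x_i(c) = \begin{pmatrix} 1 & a+c & ab \\ 0 & 1 & b \\ 0 & 0 & 1 \end{pmatrix},$$
and matching this against $x_j(\beta)x_i(\alpha)x_j(\gamma)$ forces $\alpha = a+c$, $\gamma = ab/(a+c)$, and $\beta = bc/(a+c)$, the stated positive rational functions. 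The cases $m=4$ and $m=6$ are the same computation in $Sp_4$ (or $SO_5$) and in $G_2$; they are longer but mechanical, and since the statement only asserts existence one reads positivity directly off the resulting subtraction-free formulas, as carried out in \cite{BZ}.

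The relations for $x_{-i}$ reduce to those just proved. Writing $x_{-i}(a) = y_i(a)\alpha_i^\vee(a^{-1})$ and using the torus commutation $\alpha_i^\vee(c)\,y_j(a) = y_j(c^{-a_{ij}}a)\,\alpha_i^\vee(c)$ together with Proposition \ref{prop:xy}, I would rewrite each $x_{-i}$ product, collect the torus factors at one end, and match coefficients; equivalently one applies the anti-automorphism of $G_{ij}$ swapping $x_i \leftrightarrow y_i$ and fixing $T$, then reverses the order of the factors. For $m=2,3$ this bookkeeping is short and reproduces the two displayed $x_{-i}$ identities. I expect the only real difficulty to be computational rather than conceptual: verifying the $B_2$ and $G_2$ transition maps explicitly and, in the $x_{-i}$ form, confirming that pushing the torus factors through keeps all expressions subtraction-free. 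This positivity in the $m=4,6$ cases is the one genuinely nontrivial point, and it is exactly what the rank-two matrix computations of \cite{BZ} establish.
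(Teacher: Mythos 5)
The paper gives no proof of this proposition at all: it is imported verbatim from Berenstein--Zelevinsky \cite{BZ}, so there is no internal argument to compare yours against, and a bare citation is exactly what the paper "buys" here. Your reconstruction is essentially the standard one and is sound: the reduction to the pinned rank-two subgroup $G_{ij}$ (whose type $A_1\times A_1$, $A_2$, $B_2$, $G_2$ is determined by $a_{ij}a_{ji}=0,1,2,3$), the existence of a unique rational transition map because the two alternating reduced words of $w_0$ give two parametrizations of a dense subset of the unipotent group, the explicit $SL_3$ computation for $m=3$ (your matrix and the resulting $\alpha=a+c$, $\beta=bc/(a+c)$, $\gamma=ab/(a+c)$ are correct), and the derivation of the $x_{-i}$ relations by writing $x_{-i}(a)=y_i(a)\alpha_i^\vee(a^{-1})$, pushing torus factors to one end, and matching $U_-$ and $T$ components separately (carrying this out does reproduce $e=bc/(b+ac)$, $f=ac$, $g=a+b/c$). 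Three caveats. First, the product map attached to a reduced word is \emph{not} ``an isomorphism of varieties onto the maximal unipotent $U_{ij}$'': it is only dominant and birational --- an open embedding when restricted to $(\C^*)^m$, cf.\ \cite[Proposition 4.5]{BZ} as quoted in the paper --- and on all of $\C^m$ it is not even injective (e.g.\ $x_i(a)x_j(0)x_i(c)$ depends only on $a+c$); your argument only uses birationality, so this is an imprecision rather than a gap. Second, the commutativity $x_i(a)x_j(b)=x_j(b)x_i(a)$ for $m=2$ is not the last assertion of Proposition \ref{prop:xy}, which concerns $x_i$ and $y_j$; the Chevalley commutator formula you also invoke is the correct justification. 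Third, the genuinely nontrivial content --- the subtraction-free transition formulas in the $B_2$ and $G_2$ cases --- is still deferred to \cite{BZ}, so what you have is a complete proof for $m=2,3$ and the $x_{-i}$ reduction, plus a reduction scheme and citation for $m=4,6$; that is no less than the paper itself provides, but it should be stated as such rather than as a full independent proof.
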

For a reduced word $\i = (i_1,i_2,\ldots,i_k)$, we denote $x_\i(a_1,a_2,\ldots,a_k) := x_{i_1}(a_1) \cdots x_{i_k}(a_k)$.  Proposition \ref{prop:xx} shows that if $\i$ and $\j$ are two reduced words for the same $w \in W$, then $x_\i(a_1,a_2,\ldots,a_k) = x_\j(a'_1,a'_2,\ldots,a'_k)$, for parameters $a'_j$ which are subtraction-free rational functions in the $a_j$-s.  Similarly we use the notation $x_{-\i}$.

\subsection{Toric charts}
We now discuss positive parametrizations of spaces by tori following \cite[Section 3]{BK2}.

Let $S = (\C^*)^r$ be an algebraic torus.  Let $X^*(S)$ be the group of characters of $S$.  A regular function $f$ on $S$ is {\it positive} if it is of the form $f(s)= \sum_{\lambda \in X^*(S)} a_\lambda \lambda(s)$ for nonnegative real numbers $a_\lambda$.  A rational function on $S$ is \defn{positive} if it can be expressed as a ratio of positive regular functions.  A rational function $f: S \to S'$ between two algebraic tori is positive if $\mu \circ f$ is positive for every character $\mu \in X^*(S')$.  

A {\it toric chart} on an algebraic variety $Y$ is a birational isomorphism $\theta:S \to Y$.  Two toric charts $\theta:S \to Y$ and $\theta':S' \to Y$ are {\it positively equivalent} if $(\theta)^{-1} \circ \theta'$ and $(\theta')^{-1} \circ \theta$ are both positive.  A {\it positive structure} $\Theta_Y$ on $Y$ is a positive equivalence class of toric charts and call the pair $(Y, \Theta_Y)$ a positive variety.  We define the {\it totally positive part} $Y_{>0}$ of $(Y,\Theta_Y)$ to be the image $\theta(\R_{>0}^r) \subset Y$ for any toric chart $\theta: S \to Y$ in the equivalence class $\Theta_Y$.  By the definitions, this does not depend on the choice of $\theta$.

The one-parameter subgroups $x_i$ and $y_i$ can be used to give toric charts of certain subvarieties of $G$.  Let $U^{w_0} = U \cap B_- w_0 B_-$ and let $X = B_- \cap Uw_0U$.  These two varieties are the main players in this article.

\begin{proposition}[{\cite[Proposition 4.5]{BZ}}]
Let $\i$ be a reduced word for $w_0$ and let $\ell = \ell(w_0)$.  The map $x_\i$ is an open embedding
$$
x_\i: (\C^*)^\ell \hookrightarrow U^{w_0}.
$$
For any reduced words $\i,\i'$ of $w_0$, the toric charts $x_\i$ and $x_{\i'}$ of $U^{w_0}$ are positively equivalent.  The map $x_{-\i}$ is an open embedding
$$
x_{-\i}: (\C^*)^\ell \hookrightarrow X.
$$
For any reduced words $\i,\i'$ of $w_0$, the toric charts $x_{-\i}$ and $x_{-\i'}$ of $X$ are positively equivalent.
\end{proposition}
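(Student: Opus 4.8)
The plan is to prove the two assertions about $U^{w_0}$ first and then obtain the assertions about $X$ by a parallel argument. First I would establish the underlying structural fact that the product map $x_\i \colon \mathbb{A}^\ell \to U$, $(t_1,\dots,t_\ell)\mapsto x_{i_1}(t_1)\cdots x_{i_\ell}(t_\ell)$, is an isomorphism of varieties onto $U$. This is the standard statement that for a reduced word of $w_0$ the group $U$ is directly spanned, in the order prescribed by $\i$, by its root subgroups $U_{\beta_k}$ with $\beta_k = s_{i_1}\cdots s_{i_{k-1}}\alpha_{i_k}$, the sequence $\beta_1,\dots,\beta_\ell$ enumerating $R^+$; one proves it by induction on $\ell$, peeling off the last factor and using that $U$ is a successive extension of one-dimensional root groups. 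In particular $x_\i$ is injective and restricts to an open embedding of $(\C^*)^\ell$ into $U$.

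Next I would cut out $U^{w_0}$ inside $U$ and match it with the locus where all parameters are nonzero. The membership condition $u \in B_- w_0 B_-$ is detected by the nonvanishing of the flag minors $\Delta_{w_0\omega_i,\omega_i}$ for $i \in I$ (in the $SL_2$ case this is exactly the off-diagonal entry of $x_1(a)$, which is $a$). Under the factorization $u = x_\i(t)$ the relevant chamber minors become monomials in the parameters $t_k$ — this is the Chamber Ansatz of Berenstein--Zelevinsky — so that the big-cell condition $u \in B_- w_0 B_-$ holds precisely when all $t_k \neq 0$. Combined with the dimension count $\dim U^{w_0} = \dim U = \ell$ (valid since $B_- w_0 B_-$ is open dense in $G$), this exhibits $x_\i$ as the desired open embedding $(\C^*)^\ell \hookrightarrow U^{w_0}$, i.e. a toric chart.

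The positive equivalence of the charts $x_\i$ and $x_{\i'}$ is then a formal consequence of Proposition \ref{prop:xx}. By Tits' theorem any two reduced words $\i,\i'$ for $w_0$ are connected by a chain of braid moves, and each such move replaces a subword $x_i(a)x_j(b)\cdots$ by the corresponding $x_j(b')x_i(a')\cdots$ whose parameters are, by Proposition \ref{prop:xx}, subtraction-free (hence positive) rational functions of the old ones; since the braid relations are symmetric, the same holds for the inverse substitution. Composing these elementary positive transition maps shows that $(x_{\i'})^{-1}\circ x_\i$ and its inverse are both positive, which is exactly positive equivalence.

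Finally, the statements for $X = B_- \cap U w_0 U$ and the charts $x_{-\i}$ follow by the entirely analogous argument, now using the relation $x_{-i}(a) = y_i(a)\alpha_i^\vee(a^{-1})$, the $x_{-i}$-version of the braid relations in Proposition \ref{prop:xx}, and the minor characterization of the opposite big cell $U w_0 U$; alternatively one may transport the $U^{w_0}$ result through an explicit isomorphism $U^{w_0} \xrightarrow{\sim} X$ intertwining the two families of charts. I expect the main obstacle to be the second step: matching the nonvanishing of the factorization parameters $t_k$ with the Bruhat-cell condition, that is, the precise bookkeeping showing that the chamber minors factor as monomials through the $x_\i$-coordinates. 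The isomorphism onto $U$ and the positivity of the transition maps are comparatively routine once Proposition \ref{prop:xx} and the standard structure theory of $U$ are in hand.
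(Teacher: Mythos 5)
Your braid-move argument for positive equivalence is correct and is exactly what the paper does: it notes that this half of the statement follows from Proposition \ref{prop:xx} (combined with Tits' theorem), and it treats the open-embedding half purely as a citation to \cite{BZ}. The trouble is with your proof of that open-embedding half: its foundation, your first step, is false. The direct-spanning theorem says that the multiplication map $U_{\beta_1}\times U_{\beta_2}\times\cdots\times U_{\beta_\ell}\to U$ over the $\ell$ \emph{distinct} root subgroups is an isomorphism of varieties; it says nothing about $x_\i$, which is a product of the \emph{simple}-root one-parameter subgroups $x_{i_k}$, each occurring many times. These are genuinely different maps. For $G=SL_3$ and $\i=(1,2,1)$ one computes
$$
x_1(a)\,x_2(b)\,x_1(c)=\begin{pmatrix} 1 & a+c & ab \\ 0 & 1 & b \\ 0 & 0 & 1 \end{pmatrix},
$$
so $x_\i\colon\mathbb{A}^3\to U$ is $(a,b,c)\mapsto(x,y,z)=(a+c,\,ab,\,b)$, which is neither injective (it collapses the locus $b=0$) nor surjective (nothing maps to $z=0$, $y\neq 0$); it is not an isomorphism onto $U$.

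This error propagates into your second step. What the monomiality of the chamber minors actually gives is the equality of preimages $x_\i^{-1}(U^{w_0})=(\C^*)^\ell$; without the (false) isomorphism statement this yields neither injectivity of $x_\i$ on $(\C^*)^\ell$, nor openness of its image, nor that the image is all of $U^{w_0}$. Indeed the last point fails: in the example above the image of $(\C^*)^3$ is $\{y\neq 0,\ z\neq 0,\ xz-y\neq 0\}$, whereas $U^{w_0}=\{y\neq 0,\ xz-y\neq 0\}$, so the chart misses the entire divisor $\{z=0,\ y\neq 0\}$ of $U^{w_0}$. (This is precisely why Proposition \ref{prop:codim} must take the union over \emph{all} reduced words, and even that union omits a codimension-$2$ set.) So the correct statement really is ``open embedding,'' not ``isomorphism onto $U^{w_0}$,'' and its genuine content --- injectivity on the torus and openness of the image --- is established in \cite{BZ} by producing an explicit rational inverse, expressing each parameter $t_k$ as a ratio of twisted minors of $u$ (the Chamber Ansatz, i.e.\ the twist map $\eta$); your proposal invokes the Chamber Ansatz only to cut out a locus, which is the direction that does not suffice. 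The same gap is inherited by your treatment of $X$, where a ``direct spanning'' argument is even less available since $X$ is not a subgroup.
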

Note that the ``positively equivalent'' statement follows from Proposition \ref{prop:xx}.  We denote the positive varieties by $(U^{w_0},\Theta^+)$ and $(X,\Theta^-)$.  We also have totally positive parts $U^{w_0}_{>0}$ and $X_{>0}$.

\begin{proposition}[{\cite[Lemma 2.13]{CA3}}] \label{prop:codim}
The union of the images of $x_\i$, as $\i$ varies over reduced words of $w_0$ has codimension 2 in $U^{w_0}$.
The union of the images of $x_{-\i}$, as $\i$ varies over reduced words of $w_0$ has codimension 2 in $X$.
\end{proposition}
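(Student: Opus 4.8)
The plan is to describe, for each reduced word $\i$ of $w_0$, the exact locus of $U^{w_0}$ that the chart $x_\i$ fails to cover, to show that this locus is a finite union of prime divisors, and then to prove that no single prime divisor is omitted by \emph{every} chart at once; this forces the complement of the union to have codimension at least two, and a direct inspection of a two-chart configuration pins the codimension at exactly two. Throughout I would keep in mind that the uncovered locus lies genuinely inside $U^{w_0}$ (it is \emph{not} a union of smaller Bruhat cells, which are disjoint from $U^{w_0}$): it is the locus where the chart factorizations degenerate because certain minors vanish. The second assertion, for $X = B_- \cap Uw_0U$ and the charts $x_{-\i}$, is proved by the identical argument, $X$ being a double Bruhat cell carrying the analogous chamber minors and cluster structure (it is related to $U^{w_0}$ by the standard symmetry exchanging $B$ and $B_-$), so I would treat only $U^{w_0}$ in detail.

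First I would identify the complement of a single chart. Inverting the toric parametrization $x_\i$ by the chamber Ansatz of \cite{BZ} (the twist map, together with the braid relations of Proposition \ref{prop:xx}), each factorization parameter $t_k$ of $u = x_\i(t_1,\dots,t_\ell)$ is a Laurent monomial in the generalized (chamber) minors attached to the wiring diagram of $\i$, evaluated at $u$. Since $x_\i((\C^*)^\ell)$ is exactly the locus where all $t_k$ are finite and nonzero, its complement in $U^{w_0}$ is the union of the zero and pole divisors of the $t_k$, each of which is a divisor $\{\Delta = 0\}$ for a chamber minor $\Delta$. These minors split into \emph{frozen} ones — attached to the unbounded chambers, whose nonvanishing is precisely the open condition cutting out $U \cap B_- w_0 B_-$ — and \emph{mutable} ones attached to the bounded chambers. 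As the frozen minors are nowhere zero on $U^{w_0}$, I obtain that $D_\i := U^{w_0}\setminus x_\i((\C^*)^\ell)$ is the finite union of the $\{\Delta=0\}$ over the bounded-chamber minors $\Delta$ of $\i$; by the cluster structure of \cite{CA3} each such $\Delta$ is an irreducible element of the coordinate ring and distinct minors are non-associate, so these are distinct prime divisors.

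The complement of the union of all charts is $\bigcap_\i D_\i$. If a prime divisor $D$ were contained in it, then for each $\i$ the irreducibility of $D$ would force $D = \{\Delta=0\}$ for some bounded-chamber minor $\Delta$ of $\i$; in particular $D$ is the vanishing locus of one mutable chamber minor $\Delta_0$. The heart of the argument — the step I expect to be the main obstacle — is the combinatorial claim that every mutable chamber minor can be avoided: there is a reduced word $\i'$ of $w_0$ none of whose bounded chambers carries $\Delta_0$. Granting this, $D = \{\Delta_0=0\}$ cannot lie in $D_{\i'}$, a union of vanishing loci of cluster variables all distinct from $\Delta_0$, contradicting $D \subseteq \bigcap_\i D_\i$. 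Hence $\bigcap_\i D_\i$ contains no prime divisor and has codimension at least two.

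To prove the avoidance claim I would use that the reduced words of $w_0$ are connected under the $2$- and $3$-moves of Proposition \ref{prop:xx}, that a $3$-move is a mutation replacing exactly the minor of its central bounded chamber by a different minor (through the corresponding exchange relation), and that distinct chambers of a fixed wiring diagram carry distinct minors; it then suffices to bring the chamber carrying $\Delta_0$ into the central position of some $3$-move after preparatory moves, which deletes $\Delta_0$ from the diagram. Arranging a $3$-move at a prescribed bounded chamber is the delicate combinatorial point. Finally, to see the codimension is exactly two rather than larger, I would exhibit the uncovered stratum explicitly: already for $G = SL_3$, with $\ell=3$ and reduced words $(1,2,1),(2,1,2)$, a short computation shows the two charts cover $\{a\neq 0\}\cup\{b\neq 0\}$ inside $U^{w_0} = \{c\neq 0,\ ab-c\neq 0\}$, leaving the curve $\{a=b=0,\ c\neq 0\}$ of codimension two, and the analogous bounded-chamber stratum furnishes a codimension-two component in general.
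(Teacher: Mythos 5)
You should know at the outset that the paper contains no proof of this proposition: it is imported wholesale from \cite[Lemma 2.13]{CA3}, so your proposal has to stand on its own, measured against the cluster-algebraic argument of the cited source. Its overall shape (the complement of each chart is a union of divisors; show that no divisor lies in every chart's complement) is the right one, but the step you yourself flag as the main obstacle --- the ``avoidance claim'' --- is a genuine gap, and the mechanism you propose for it cannot work at the level of generality of the statement. Here $G$ is an \emph{arbitrary} semisimple group, so by Proposition~\ref{prop:xx} reduced words of $w_0$ are related by braid moves with $m \in \{2,3,4,6\}$, not only $2$- and $3$-moves. In type $B_2$ the longest element has exactly two reduced words, $(1,2,1,2)$ and $(2,1,2,1)$, related by a single $4$-move, and in $G_2$ two words related by a single $6$-move: there are no $3$-moves at all, so ``bring the chamber carrying $\Delta_0$ into the central position of some $3$-move'' is vacuous there; and in types $B_n$, $C_n$, $F_4$, $G_2$ generally, a $4$- or $6$-move is not a single mutation and changes several bounded-chamber minors at once. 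The uniform way around this, and in essence how \cite{CA3} proceeds, is local rather than global: for a seed with mutable variable $x_k$, the exchange relation $x_k x_k' = M_1 + M_2$ shows that at a generic point of the missed divisor $\{x_k = 0\}$ one has $x_k' \neq 0$, so the adjacent torus covers that divisor off a codimension-two set. This requires no claim that some reduced word avoids $\Delta_0$ among all of its chambers --- which is fortunate, since mutation of a reduced-word seed need not produce a reduced-word seed (already in type $A$, mutation at a non-square chamber leaves the class of wiring diagrams), a mismatch that any complete write-up along your lines, or theirs, must also address when the union is taken over reduced words only, as in the statement.

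There are also two smaller but real inaccuracies. First, your identification of $D_\i$ is off: the Chamber Ansatz expresses the parameters $t_k$ as Laurent monomials in minors of the \emph{twisted} element $\eta(u)$, not of $u$ itself. Concretely, for $G = SL_3$ and $\i = (1,2,1)$ one has $u_{12} = t_1 + t_3$, $u_{23} = t_2$, $u_{13} = t_1 t_2$, hence $t_1 = u_{13}/u_{23}$, $t_2 = u_{23}$, $t_3 = (u_{12}u_{23}-u_{13})/u_{23}$, so the image of $x_{(1,2,1)}$ inside $U^{w_0}$ is $\{u_{23} \neq 0\}$; but $u_{23} = \Delta_{\{2\},\{3\}}(u)$ is not a flag (chamber) minor of $u$, and under the standard conventions the bounded-chamber minor of the diagram of $(1,2,1)$ is $u_{12}$, whose nonvanishing locus is the image of the \emph{other} chart $x_{(2,1,2)}$. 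So the dictionary between charts and missed divisors must be threaded through the twist map $\eta$; your argument never sets this up, and it also asserts without justification the irreducibility and pairwise distinctness (on $U^{w_0}$, not on $G$) of the relevant minors, which you need in order to say a codimension-one component of $\bigcap_\i D_\i$ equals some single $\{\Delta_0 = 0\}$. Second, the proposition should be read as ``codimension at least $2$'': that is all the paper needs (to glue $\omega_U$ across charts) and all that is true in general, since for $G = SL_2$ the unique chart $x_1 : \C^* \to U^{w_0}$ is surjective and the complement is empty. Your $SL_3$ computation is correct and does exhibit a codimension-two complement there, but it cannot be promoted to a general sharpness claim.
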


\begin{proposition}[{\cite[Proposition 4.2]{BK1}}]\label{prop:eta}
The map $\eta: U^{w_0} \to X$ given by
$$
\eta(u) = \pi^-(u\overline{w_0})
$$
is an isomorphism of positive varieties.  The inverse map is given by
$$
\eta^{-1}(x) = \pi^+(\overline{w_0}x^{-1})^{-1}.
$$
\end{proposition}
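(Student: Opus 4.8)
The plan is to separate the statement into two independent tasks. First, that $\eta$ is a biregular isomorphism $U^{w_0} \to X$ with the asserted inverse, which is pure Bruhat/Gauss bookkeeping; and second, that $\eta$ and $\eta^{-1}$ are positive as rational maps, which I will check in toric charts. Throughout I will use three standard facts: conjugation by $\overline{w_0}$ interchanges $U \leftrightarrow U_-$ and $B \leftrightarrow B_-$ while fixing $T$; the big cell $B_- U = B_- B$ carries a unique factorization $g = \pi^-(g)\pi^+(g)$; and $\overline{w_0}^{\,2} \in T$.

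For well-definedness, take $u \in U^{w_0}$ and write $u = b_1 \overline{w_0} b_2$ with $b_1, b_2 \in B_-$. Then $u\overline{w_0} = b_1(\overline{w_0} b_2 \overline{w_0}^{-1})\overline{w_0}^{\,2}$ lies in $B_- \cdot B = B_- U$, so $\pi^\pm(u\overline{w_0})$ make sense. Since $\pi^-(u\overline{w_0}) = u\,\overline{w_0}\,\pi^+(u\overline{w_0})^{-1}$ with $u, \pi^+(u\overline{w_0})^{-1} \in U$, the image lies in $U\overline{w_0}U$, hence in $B_- \cap U\overline{w_0}U = X$, so $\eta$ maps into $X$. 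For the inverse, write $x = u_1\overline{w_0} u_2 \in X$ with $u_1, u_2 \in U$; then $\overline{w_0} x^{-1} = (\overline{w_0} u_2^{-1}\overline{w_0}^{-1})\,u_1^{-1}$ is already of the form $U_- \cdot U$, so by uniqueness $\pi^+(\overline{w_0} x^{-1}) = u_1^{-1}$ and $\eta^{-1}(x) = u_1 \in U$. The same computation read backwards, together with $X \subset B_-$, shows $\eta^{-1}(x) \in U^{w_0}$ and that $\eta$ and the stated $\eta^{-1}$ are mutually inverse; the crucial point in the composition check is that the torus factors cancel precisely because $\overline{w_0} x^{-1}$ has trivial $T$-component. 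Both maps are visibly given by rational formulas regular on their domains, so this already yields a biregular isomorphism; the rank-one identity $\eta(x_1(a)) = x_{-1}(a^{-1})$ is a reassuring check.

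The substantive part is positivity. By the preceding toric-chart proposition the charts $x_\i$ on $U^{w_0}$ and $x_{-\i}$ on $X$ are, for different reduced words $\i$ of $w_0$, positively equivalent, so it suffices to verify positivity of the transition map $x_{-\i}^{-1}\circ \eta \circ x_\i$ on $(\C^*)^\ell$, and of its inverse, for a single convenient $\i$. I will compute $\eta(x_\i(a_1,\dots,a_\ell))$ by transporting the factor $\overline{w_0}$ to the left past the generators $x_{i_k}(a_k)$ one at a time, rewriting $\overline{w_0}$ through $\bar s_i = x_i(-1)y_i(1)x_i(-1)$, collecting the resulting $y$- and torus-factors into $x_{-i}(a) = y_i(a)\alpha_i^\vee(a^{-1})$, and commuting terms past one another using Proposition~\ref{prop:xy} and Proposition~\ref{prop:xx}. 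The output should be $x_{-\j}$ of subtraction-free functions of the $a_k$ for a reduced word $\j$, and the analogous computation handles $\eta^{-1}$.

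I expect this last computation to be the main obstacle, for two reasons. First, the defining expression $\bar s_i = x_i(-1)y_i(1)x_i(-1)$ carries explicit minus signs, so one must check that all subtractions cancel and that only the subtraction-free relations of Proposition~\ref{prop:xy} (and the positive braid moves of Proposition~\ref{prop:xx}) are actually invoked; this is where positivity, as opposed to mere rationality, is won or lost. Second, bookkeeping the torus factors $\alpha_i^\vee(\cdot)$ that accumulate as the $y$'s are commuted past the $x$'s requires care, since these are exactly what convert the computation into the $x_{-\j}$ parametrization of $X$. A clean way to organize both points is to establish positivity of $\pi^-$ on the relevant locus directly and then compose, but in any case the heart of the matter is the subtraction-free nature of the chart-to-chart transition.
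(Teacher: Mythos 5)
First, a point of comparison: the paper offers no proof of Proposition \ref{prop:eta} at all --- it is quoted verbatim from \cite[Proposition 4.2]{BK1} --- so your attempt can only be measured against the literature the paper cites. Your first half is correct and complete: $u\overline{w_0}\in B_-\cdot U$ for $u\in U^{w_0}$, so $\pi^{\pm}$ apply; $\eta(u)=u\overline{w_0}\,\pi^+(u\overline{w_0})^{-1}$ lies in $B_-\cap U\overline{w_0}U=X$; for $x=u_1\overline{w_0}u_2\in X$ the factorization $\overline{w_0}x^{-1}=(\overline{w_0}u_2^{-1}\overline{w_0}^{-1})\,u_1^{-1}\in U_-\cdot U$ identifies $\pi^+(\overline{w_0}x^{-1})^{-1}=u_1$ by uniqueness of the $B_-\cdot U$ factorization; and the two maps are mutually inverse regular morphisms. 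This is the same Bruhat bookkeeping as in \cite{BK1}, and your rank-one sanity check is right.

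The genuine gap is the second half, which is the actual content of the proposition and the reason it carries a citation. ``Isomorphism of positive varieties'' means precisely that the transition maps $x_{-\i}^{-1}\circ\eta\circ x_{\j}$ and $x_{\j}^{-1}\circ\eta^{-1}\circ x_{-\i}$ are positive, i.e.\ subtraction-free, and this you do not prove: you state a plan (``I will compute\dots'', ``the output should be\dots'') and then yourself identify exactly why the plan is in danger --- the lift $\bar s_i=x_i(-1)y_i(1)x_i(-1)$ injects minus signs, and nothing in your argument shows they cancel. That cancellation \emph{is} the theorem; asserting that the subtractions ``should'' cancel is where the proof of positivity actually lives, and pushing $\overline{w_0}$ letter-by-letter through $x_\i(a_1,\dots,a_\ell)$ produces intermediate expressions that genuinely contain subtractions, so the cancellation must be demonstrated rather than expected. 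The way this is done in \cite{BK1,BZ,FZ} is to avoid moving $\overline{w_0}$ altogether and instead express the chart coordinates of $\eta(u)$ and $\eta^{-1}(x)$ as ratios of generalized minors, which admit subtraction-free expressions in the factorization parameters --- the same mechanism the paper invokes for Theorem \ref{thm:posdec}. Within the paper's own toolkit, Lemma \ref{lem:monom} (essentially \cite[Lemma 6.1]{BZ}) is the relevant engine: it converts $x_\i(a_1,\dots,a_\ell)$ into a torus factor, monomial in the $a_j$, times a transposed $x_{-\i^{\op}}$ factor, and is what makes computations like the paper's $SL_3$ example --- where the transition works out to the invertible monomial map $(a,b,c)\mapsto(1/b,\,1/ab,\,1/c)$ --- tractable in general. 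Until a computation of this kind is actually carried out, your proposal establishes an isomorphism of varieties, not of positive varieties.
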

The map $\eta$ is called the ``twist map''; it is studied extensively in \cite{BZ,FZ}.  It follows from Proposition \ref{prop:eta} that for $u \in U^{w_0}$ there is a unique $\tau(u) \in U$ such that $u \bar w_0 \tau(u) = \eta(u)$.  For a reduced word $\i = (i_1,i_2,\ldots,i_k)$, let $\i^{\op}=(i_k,\ldots,i_1)$ denote the reversed reduced word.  The following result is essentially \cite[Lemma 6.1]{BZ}; see also \cite[Proof of Theorem 4.1.20]{Chh}.
\begin{lemma}\label{lem:monom}
Let $\i = (i_1,\ldots,i_k)$ be a reduced word.  Then
$$
x_{\i}(a_1,\ldots,a_k) = \left(\prod_{j=1}^k \beta_{j}^\vee(a_j)\right) x_{-\i^{\op}}(b_k,\ldots,b_1)^T
$$
where $(a_1,\ldots,a_k)$ and $(b_1,\ldots,b_k)$ are related by an invertible monomial transformation, and $\beta_k = s_{i_1} \cdots s_{i_{k-1}} \alpha_{i_k}$.
\end{lemma}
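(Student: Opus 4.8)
The plan is to reduce the identity to three ingredients: the \emph{transpose} anti-automorphism of $G$, a bookkeeping calculation that pushes all torus factors to one side, and a telescoping identity among the coweights $\beta_j^\vee$. Throughout I write $g \mapsto g^T$ for the anti-automorphism determined by $x_i(a)^T = y_i(a)$, $y_i(a)^T = x_i(a)$, and $t^T = t$ for $t \in T$; it satisfies $(gh)^T = h^T g^T$. Since $x_{-i}(a) = y_i(a)\alpha_i^\vee(a^{-1})$, applying the anti-automorphism gives the basic identity
$$
x_{-i}(a)^T = \alpha_i^\vee(a^{-1})\, y_i(a)^T = \alpha_i^\vee(a)^{-1}\, x_i(a),
$$
which is the only place the definition of $x_{-i}$ enters.

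First I would transpose the right-hand factor. Writing $x_{-\i^{\op}}(b_k,\ldots,b_1) = x_{-i_k}(b_k)\cdots x_{-i_1}(b_1)$ and using that $T$ reverses products, I get
$$
x_{-\i^{\op}}(b_k,\ldots,b_1)^T = \prod_{l=1}^{k}\bigl(\alpha_{i_l}^\vee(b_l^{-1})\,x_{i_l}(b_l)\bigr),
$$
the product taken in the order $l = 1, 2, \ldots, k$. Next I would commute every torus factor to the far left using $t\,x_i(a)\,t^{-1} = x_i(\alpha_i(t)a)$ together with $\alpha_j(\alpha_i^\vee(c)) = c^{a_{ij}}$. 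Moving $\alpha_{i_l}^\vee(b_l^{-1})$ leftward past an earlier generator $x_{i_m}(\cdot)$ with $m < l$ multiplies its argument by $b_l^{a_{i_l i_m}}$, and the torus factors themselves commute, so the result is $\bigl(\prod_{l=1}^{k}\alpha_{i_l}^\vee(b_l^{-1})\bigr)\,x_\i(\tilde b_1,\ldots,\tilde b_k)$ with $\tilde b_m = b_m\prod_{l > m} b_l^{a_{i_l i_m}}$. The assignment $a_m := \tilde b_m$ is unitriangular in the monomial sense (solve downward from $a_k = b_k$), hence an invertible monomial transformation, and rearranging gives $x_\i(a_1,\ldots,a_k) = \bigl(\prod_{l=1}^k \alpha_{i_l}^\vee(b_l)\bigr)\,x_{-\i^{\op}}(b_k,\ldots,b_1)^T$.

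It then remains to identify the torus prefactor, i.e.\ to prove $\prod_{l=1}^k \alpha_{i_l}^\vee(b_l) = \prod_{j=1}^k \beta_j^\vee(a_j)$. Passing to logarithms this becomes a linear identity among coweights, and substituting the monomial relation reduces it to the single claim $\alpha_{i_l}^\vee = \beta_l^\vee + \sum_{j<l} a_{i_l i_j}\,\beta_j^\vee$ for each $l$. I would prove this by telescoping: writing $w_j = s_{i_1}\cdots s_{i_j}$ so that $\beta_j^\vee = w_{j-1}\alpha_{i_j}^\vee$, one has
$$
\beta_l^\vee - \alpha_{i_l}^\vee = \sum_{j=1}^{l-1} w_{j-1}\bigl(s_{i_j}\alpha_{i_l}^\vee - \alpha_{i_l}^\vee\bigr) = -\sum_{j=1}^{l-1}\langle \alpha_{i_j},\alpha_{i_l}^\vee\rangle\,\beta_j^\vee,
$$
and $\langle \alpha_{i_j},\alpha_{i_l}^\vee\rangle = a_{i_l i_j}$ yields the claim. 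I expect this last coweight bookkeeping to be the only genuinely delicate step: the commutation in the middle paragraph is routine once the exponents $a_{i_l i_m}$ are pinned down with the paper's sign conventions, and the transpose identity is immediate, but matching the accumulated factor $\prod_l \alpha_{i_l}^\vee(b_l)$ to the intrinsic product $\prod_j \beta_j^\vee(a_j)$ is where the reflection-order structure of the $\beta_j$ is actually used. An alternative would be an induction peeling off the last letter $i_k$ (which leaves $\beta_1,\ldots,\beta_{k-1}$ unchanged), but the direct computation above seems cleaner since it produces the monomial transformation explicitly.
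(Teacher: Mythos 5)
Your proposal is correct and takes essentially the same route as the paper's sketch proof: apply the transpose anti-automorphism $x_i(a)^T = y_i(a)$, $t^T = t$ to $x_{-\i^{\op}}(b_k,\ldots,b_1)$ and then push the torus factors $\alpha_{i_l}^\vee(b_l^{-1})$ leftward through the $x_i$'s via $t\,x_i(a)\,t^{-1} = x_i(\alpha_i(t)a)$. Your final paragraph, the telescoping identity $\alpha_{i_l}^\vee = \beta_l^\vee + \sum_{j<l} a_{i_l i_j}\beta_j^\vee$ that matches the accumulated prefactor $\prod_l \alpha_{i_l}^\vee(b_l)$ with $\prod_j \beta_j^\vee(a_j)$, is exactly the bookkeeping the paper's sketch leaves implicit, and you carry it out correctly.
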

\begin{proof}[(Sketch proof.)]
Let $g \mapsto g^T$ denote the transpose anti-automorphism of $G$, given on generators by $x_i(a)^T = y_i(a)$ and $(\alpha_i^\vee(a))^T = \alpha_i^\vee(a)$.  We have 
$$
x_{-\i^{\op}}^T = \left(y_{i_k}(b_k)\alpha_{i_k}^\vee(b_k^{-1}) \cdots y_{i_1}(b_1)\alpha_{i_1}^\vee(b_1^{-1})\right)^T = \alpha_{i_1}^\vee(b_1^{-1}) x_{i_1}(b_1) \cdots \alpha_{i_k}^\vee(b_k^{-1}) x_{i_k}(b_k).$$
Now push the torus factors through the $x_i$'s using the basic relation $t x_i(a) t^{-1} = x_i(\alpha_i(t) a)$, for $t \in T$.
\end{proof}

%\begin{lemma}
%Suppose 
%\end{lemma}
%\begin{proof}
%Let $u = x_\i(a_1,\ldots,a_\ell)$.  Write $u = t$
%\end{proof}

%x_{-\i}(a_1,\ldots,a_k)^T = \alpha_{i_1}^\vee(t_1^{-1}) \cdots \alpha_{i_k}^\vee(t_k^{-1}) \; x_{\i^{\op}}(t'_k,\ldots,t'_k)

%where $(t_1,t_2,\ldots,t_m)$ and $(t'_1,\ldots,t'_m)$ are related by a monomial transformation.
%$$
%t'_k = t_k \cdot \prod_{l < k} t_l^{a_{i_k,i_l}}
%$$
%and
%$$
%t_k = t'_k \prod_{l < k} t'_l^{\ip{\alpha_{i_k},\beta^\vee_l}}
%$$

\subsection{Canonical form}
Given a torus $S = \{(x_1,x_2,\ldots,x_r)\} \simeq (\C^*)^r$, we have a canonical top form
$$
\omega_S = \frac{dx_1}{x_1} \frac{dx_2}{x_2} \cdots \frac{dx_r}{x_r}. 
$$
Let $y_i = \prod_{j=1}^r x_i^{c_{ij}}$ be a monomial transformation of the torus.  Then
\begin{equation}\label{eq:monomial}
 \frac{dy_1}{y_1} \cdots \frac{dy_r}{y_r}= \det(C)   \frac{dx_1}{x_1} \cdots \frac{dx_r}{x_r}.
\end{equation}
The algebraic group automorphisms of $S$ are given by invertible monomial transformations, and in that case $C \in GL_n(\Z)$ is invertible, and thus $\det(C) = \pm 1$.  Thus $\omega_S$ is well-defined up to sign.

If $\theta: S \to Y$ is a toric chart, then the pushforward of $\omega_S$ gives a top form $\omega_\theta$ on $Y$.  We say that two toric charts $\theta:S \to Y$ and $\theta':S \to Y$ are \defn{canonically equivalent} if $\omega_\theta$ and $\omega_{\theta'}$ are equal up to sign.  A canonically positive variety is a triple $(Y, \Theta_Y, \omega_Y)$ where $\Theta_Y$ is a canonically positive equivalence class of toric charts and $\omega_Y$ is the induced top form (defined up to sign).  A birational morphism $f: Y \to Z$ of canonically positive varieties is \defn{canonically positive} if $f$ and $f^{-1}$ are both positive morphisms and $f$ sends $\omega_Y$ to $\omega_Y$.

%\begin{remark}
%We believe one can show that a positive morphism being a canonical equivalence should imply certain properties of the corresponding tropicalizations.
%\end{remark}

\begin{proposition}[\cite{Rie1,Chh}]\label{prop:canon}
For any reduced words $\i,\i'$ of $w_0$, the toric charts $x_\i$ and $x_{\i'}$ of $U^{w_0}$ are canonically equivalent.  
For any reduced words $\i,\i'$ of $w_0$, the toric charts $x_{-\i}$ and $x_{-\i'}$ of $X$ are canonically equivalent.
\end{proposition}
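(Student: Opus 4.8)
The plan is to reduce the statement to a finite check on the elementary braid moves. Since any two reduced words $\i,\i'$ for $w_0$ are connected by a sequence of braid moves (Tits/Matsumoto), each replacing a consecutive subword $(i,j,i,\dots)$ of length $m$ by $(j,i,j,\dots)$, where $m$ is the order of $s_i s_j$, it suffices to treat the case where $\i$ and $\i'$ differ by a single braid move, say in positions $k+1,\dots,k+m$. Because the corresponding group elements are literally equal, the prefix $x_{i_1}(a_1)\cdots x_{i_k}(a_k)$ and the suffix are unchanged with the same parameters; hence the transition map $x_{\i'}^{-1}\circ x_\i$ is the identity on all coordinates outside the block and acts by the positive rational map of Proposition \ref{prop:xx} on the $m$ block coordinates. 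Writing $\omega_S$ as a wedge of logarithmic differentials, the factors from outside the block are preserved identically, so $x_\i$ and $x_{\i'}$ are canonically equivalent precisely when the block transition preserves $\frac{da_{k+1}}{a_{k+1}}\cdots\frac{da_{k+m}}{a_{k+m}}$ up to sign.

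For $m=2$ the block map is the transposition $(a,b)\mapsto(b,a)$, whose logarithmic Jacobian determinant is $-1$, so the form is preserved up to sign. For $m=3$ one computes directly from Proposition \ref{prop:xx}: with $(a,b,c)\mapsto(b',a',c')=\bigl(\tfrac{bc}{a+c},\,a+c,\,\tfrac{ab}{a+c}\bigr)$ and $\sigma=\frac{d(a+c)}{a+c}$, the logarithmic differentials are $\frac{db'}{b'}=\frac{db}{b}+\frac{dc}{c}-\sigma$, $\frac{da'}{a'}=\sigma$, and $\frac{dc'}{c'}=\frac{da}{a}+\frac{db}{b}-\sigma$; wedging these and using $\sigma\wedge\sigma=0$ collapses the product to $\frac{da}{a}\frac{db}{b}\frac{dc}{c}$. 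The $X$-side braid map $(a,b,c)\mapsto\bigl(\tfrac{bc}{b+ac},\,ac,\,a+\tfrac{b}{c}\bigr)$ is handled the same way, and the analogous short determinant computation again yields $+1$.

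The remaining cases $m=4$ and $m=6$ are what I expect to be the only genuine obstacle: Proposition \ref{prop:xx} merely asserts the \emph{existence} of the positive rational transition functions $b_1,\dots,b_4$ and $c_1,\dots,c_6$, so one must insert their explicit forms (as in \cite{BZ,FZ}) and verify that the logarithmic Jacobian determinant of the resulting $4\times4$ and $6\times6$ transition is identically $\pm1$. This is a finite but tedious subtraction-free calculation. A cleaner, computation-free route that I would prefer is to exhibit an intrinsic logarithmic top form on $U^{w_0}$ (respectively on $X$) and show that every chart $x_\i$ pulls it back to $\pm\,\omega_S$; concretely this is the restriction of the meromorphic form on the flag variety with simple poles along the boundary divisors described in the introduction. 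Its braid-invariance would then be automatic after a single pullback computation, at the cost of first establishing existence and uniqueness of that form.

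Finally, the second assertion, for the charts $x_{-\i}$ of $X$, follows verbatim from the same reduction, now using the braid relations for the $x_{-i}$ recorded in Proposition \ref{prop:xx} (the cases $m=2,3$ appear there, and $m=4,6$ are analogous). Alternatively, one may transfer the $U^{w_0}$ statement across the twist isomorphism $\eta$ of Proposition \ref{prop:eta}, or invoke the monomial relation of Lemma \ref{lem:monom} between $x_\i$ and $x_{-\i^{\op}}$, whose monomial change of variables contributes only a factor $\det=\pm1$ to the canonical form by \eqref{eq:monomial}.
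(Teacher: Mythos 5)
Your proposal is correct and follows essentially the same route as the paper: reduce to a single braid move (Tits' theorem) and check that each elementary transition from Proposition \ref{prop:xx} has logarithmic Jacobian determinant $\pm 1$; your $m=2,3$ computations are right, and the paper records the very same $m=3$ check on the $X$ side (Jacobian determinant $\frac{efg}{abc}$, i.e.\ dlog-determinant $1$). The differences lie in how the loose ends are tied. For $m=4,6$ you leave a ``finite but tedious'' verification; the paper simply cites the case-by-case check carried out in \cite{Rie1}, which is why the proposition is attributed to \cite{Rie1,Chh}. For the $X$-side, instead of your ``verbatim/analogous'' re-run --- which for $m=4,6$ would need explicit $x_{-i}$ braid relations that Proposition \ref{prop:xx} does not record --- the paper uses a cleaner transfer: by \cite[Proposition 7.3(5)]{BZ} the $x_{-\i}$ braid relations are obtained from the $x_\i$ ones by inverting all parameters, and coordinatewise inversion is a monomial map of determinant $\pm 1$, so the $X$-statement follows formally from the $U^{w_0}$-statement with no new computation. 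Your Lemma \ref{lem:monom} alternative can be made to work in the same spirit, but it needs the additional observation (essentially Proposition \ref{prop:gamma}) that the torus factor $\prod_k \beta_k^\vee(a_k)$ is independent of the reduced word, so that equal points of $X$ correspond to equal points of $U^{w_0}$. By contrast, your $\eta$ alternative is circular at this point in the paper: Proposition \ref{prop:eta} only makes $\eta$ an isomorphism of positive varieties, and the statement that it respects canonical forms is Proposition \ref{prop:canonicallyequiv}, which presupposes the present proposition. Finally, your preferred ``computation-free'' route via the intrinsic form with simple poles on the boundary is exactly what the paper develops in the following subsection (Lemmas \ref{lem:boundary} and \ref{lem:simple}), but note that this argument pins the form down only up to a nonzero scalar, not up to sign, which is why the paper offers it as a complementary explanation rather than as the proof of canonical equivalence.
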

\begin{proof}
This can be verified directly by using the relations in Proposition \ref{prop:xx}.  This was done for $x_\i$ in \cite{Rie1} case-by-case.  The braid relations for $x_\i$ and $x_{-\i}$ in Proposition \ref{prop:xx} are related by inversion of parameters (\cite[Proposition 7.3(5)]{BZ}), so the statement for $x_{-\i}$ follows.  Let us illustrate what needs to be checked for the $m = 3$ relation for $x_{-\i}$.  Suppose $x_{-i}(a)x_{-j}(b)x_{-i}(c) = x_{-j}(e) x_{-i}(f) x_{-j}(g)$, where $e = bc/(b+ac)$, $f=ac$, and $g=a+b/c$.  We need to verify that 
$$
\frac{da}{a} \frac{db}{b} \frac{dc}{c} = \pm \frac{de}{e} \frac{df}{f} \frac{dg}{g}.
$$
%It is convenient to use logarithmic coordinates $A =\log(a)$ and so on, and the relations become
%$$
%E = B+C - \log(e^B + e^{A+C}) \qquad F = A + C \qquad G = \log(e^A + e^{B-C}).
%$$
The Jacobian matrix is
$$
\left(
\begin{array}{ccc}
 -\frac{c}{a (b+a c)^2} & \frac{c}{b (b+a c)^2} & \frac{b}{a c (b+a c)^2} \\
 \frac{1}{a b} & 0 & \frac{1}{b c} \\
 \frac{1}{a b c} & \frac{1}{a b c^2} & -\frac{1}{a c^3} \\
\end{array}
\right)
$$
which has determinant $\frac{efg}{abc}$.
\end{proof}

We will always use the canonically positive equivalence class of Proposition \ref{prop:canon} for $U^{w_0}$ and $X$, and generally omit this from the notation.  The canonical forms are denoted $\omega_U$ and $\omega_X$.  The following result was first explicitly observed by Chhaibi \cite{Chh}. 

\begin{proposition}[\cite{Chh}]\label{prop:canonicallyequiv}
The morphism $\eta: U^{w_0} \to X$ is a canonically positive isomorphism.
\end{proposition}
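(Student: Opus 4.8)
The plan is to verify the two clauses in the definition of a canonically positive isomorphism. Positivity of $\eta$ and of $\eta^{-1}$ is already furnished by Proposition~\ref{prop:eta}, so the only thing left to establish is that $\eta$ carries the canonical form to the canonical form up to sign; equivalently, that $\eta^*\omega_X = \pm\,\omega_U$.

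First I would pass to toric charts. Fix reduced words $\i$ and $\j$ of $w_0$ and form the birational self-map $\Phi := x_{-\j}^{-1}\circ\eta\circ x_\i$ of $(\C^*)^\ell$. By Proposition~\ref{prop:eta} this $\Phi$ is a positive birational isomorphism, but positivity alone is not enough: the braid moves of Proposition~\ref{prop:xx} are positive and yet visibly non-monomial, and such maps need not preserve the canonical form. The substantive claim is therefore that $\Phi$ is a \emph{monomial} transformation. Granting this, since $\Phi$ is birational its exponent matrix $C$ must satisfy $\det C = \pm 1$ (a monomial self-map of $(\C^*)^\ell$ with $|\det C|>1$ is a nontrivial covering, not a birational map), so $C\in GL_\ell(\Z)$, and \eqref{eq:monomial} gives $\Phi^*\omega_S = \pm\,\omega_S$. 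Pushing this equality through the charts yields $\eta^*\omega_X = \pm\,\omega_U$. Because $\omega_U$ and $\omega_X$ do not depend on the reduced words by Proposition~\ref{prop:canon}, it suffices to prove monomiality of $\Phi$ for a single convenient pair $(\i,\j)$.

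The engine for monomiality is Lemma~\ref{lem:monom}. Writing $u = x_\i(a)$, the lemma gives $u = D(a)\,z^T$ with $z = x_{-\i^{\op}}(b_k,\dots,b_1)\in X$, where $D(a) = \prod_{j}\beta_j^\vee(a_j)\in T$ and the parameters $b$ are monomial in $a$. Since $\eta(u) = \pi^-(u\,\bar w_0)$ and left multiplication by a torus element commutes with $\pi^-$ (if $g = b_- u_+$ then $Dg = (Db_-)u_+$, so $\pi^-(Dg) = D\,\pi^-(g)$), we get $\eta(u) = D(a)\,\pi^-\!\big(z^T\bar w_0\big)$. The remaining task is to compute the Gauss-type projection $\pi^-\!\big(z^T\bar w_0\big)$, which I expect to be a product of $y$-factors (in the $SL_2$ case one finds $\pi^-(z^T\bar w_0)=y_1(b)$, giving $\eta(x_1(a)) = x_{-1}(a^{-1})$), and then to absorb $D(a)$ by pushing it through those factors using $t\,y_i(c)\,t^{-1} = y_i(\alpha_i(t)^{-1}c)$ and $x_{-i}(c) = y_i(c)\alpha_i^\vee(c^{-1})$. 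This reassembles $\eta(u)$ as the value $x_{-\j}(\dots)$ of a chart of $X$ with arguments that are monomial in $a$, exactly as in the proof sketch of Lemma~\ref{lem:monom}.

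The main obstacle is precisely this projection step. A priori $\eta$ is only a positive rational map, and the whole content is that all of the genuinely non-monomial mixing is confined to the torus factor $D(a)$ produced by Lemma~\ref{lem:monom}, leaving behind only a monomial reparametrization of the $x_{-\j}$-coordinates. The transpose trick in Lemma~\ref{lem:monom} is what makes this work, since it trades the $x_i$-factorization of $u\in U^{w_0}$ for an $x_{-i}$-factorization of an element of $X$ at the cost of a single torus element and a monomial change of variables; monomiality of twist maps in such factorization coordinates is classical in the total-positivity literature. Once $\Phi$ is known to be monomial, the determinant bookkeeping in the second paragraph is routine, and the proposition follows.
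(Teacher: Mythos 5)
Your reduction is sound as far as it goes: positivity of $\eta$ and $\eta^{-1}$ is Proposition \ref{prop:eta}, monomiality of a single chart transition $\Phi = x_{-\j}^{-1}\circ\eta\circ x_\i$ would suffice by Proposition \ref{prop:canon}, and your determinant argument (birational monomial map $\Rightarrow$ $C\in GL_\ell(\Z)$ $\Rightarrow$ form preserved up to sign via \eqref{eq:monomial}) is correct. This is indeed the shape of Chhaibi's actual proof. For comparison, the paper does not prove the proposition at all: it cites \cite{Chh}, illustrates monomiality in the $SL_3$ example (where $\eta(x_{(2,1,2)}(a,b,c))$ has chart coordinates $(1/b,\,1/(ab),\,1/c)$), and then gives a genuinely different geometric argument (Lemmas \ref{lem:boundary} and \ref{lem:simple}: both forms are non-vanishing dlog forms, hence extend to $G/B_-$ with simple poles on the anticanonical boundary divisor of $R_{1,w_0}$) which, however, only pins down $(\eta^{-1})_*\omega_X$ as a \emph{scalar} multiple of $\omega_U$, not a $\pm 1$ multiple.

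The genuine gap is that your proposed engine cannot produce the monomiality claim: the argument is circular. Lemma \ref{lem:monom} gives $u = x_\i(a) = D(a)\,z^T$ with $z = x_{-\i^{\op}}(b_k,\ldots,b_1)$; but transposing back and pushing the torus factors through (the same elementary computation as in the sketch proof of Lemma \ref{lem:monom}), one finds $z^T = t_z\, x_\i(b'')$, where $t_z = \prod_j \alpha_{i_j}^\vee(b_j^{-1})$ and $b''$ is monomial in $b$, hence in $a$. Therefore
$$
\pi^-\big(z^T\bar w_0\big) \;=\; t_z\,\pi^-\big(x_\i(b'')\,\bar w_0\big) \;=\; t_z\,\eta\big(x_\i(b'')\big),
$$
so your "remaining task" is exactly the original problem: computing $\eta$ on an $x_\i$-chart, at monomially shifted arguments. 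Lemma \ref{lem:monom} is pure torus bookkeeping; this is why, in the paper, it yields precisely Proposition \ref{prop:gamma} (the torus part $\gamma(\eta(u))$ is monomial in the $a_j$) and nothing about the coordinates of the unipotent part of $\eta(u)$. The real content — that the Gauss decomposition of $u\bar w_0$, which a priori produces subtraction-free denominators like the $1+aa'$ of Proposition \ref{prop:xy}, in fact yields only Laurent monomials in factorization coordinates — is the whole theorem, and your appeal to it being "classical" is not accurate: the classical twist/Chamber-Ansatz formulas of \cite{BZ,FZ} for the related self-maps of $U^{w_0}$ are subtraction-free rational but decidedly non-monomial. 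That monomiality statement for this particular $\eta$ is exactly what the citation to \cite{Chh} carries, and it needs a genuine inductive proof (your $SL_2$ anchor is a direct $2\times 2$ computation that does not propagate by your scheme). Alternatively, within the paper's toolkit you could get agreement up to an unspecified constant for free from Lemmas \ref{lem:boundary} and \ref{lem:simple}, and then a separate normalization argument would still be needed to force that constant to be $\pm 1$.
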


To summarize, $U^{w_0}$ and $X$ have totally positive parts $U^{w_0}_{>0}$ and $X_{>0}$, and canonical top forms $\omega_U$ and $\omega_X$ that are sent to each other via $\eta$ and $\eta^{-1}$.

In the next section we give another explanation for why $\omega_U$ and $(\eta^{-1})_*(\omega_X)$ must agree (at least up to a scalar).

\subsection{Direct geometric interpretation of $\omega_U$ and $\omega_X$}
We give a direct geometric interpretation for $\omega_U$.  By Proposition \ref{prop:codim} and Proposition \ref{prop:canon}, the form $\omega_U$ is well-defined and has no zeroes on the whole of $U^{w_0}$.  We can embed $U^{w_0}$ into the flag variety $G/B_-$ via the map $u \mapsto uB_-/B_-$.  The image of $U^{w_0}$ is the open Richardson variety $R_{1,w_0}$: it is the intersection of the open Schubert cell with the opposite open Schubert cell \cite{Rie2,KLS}.

\begin{lemma}\label{lem:boundary}
There is up to scalar only one meromorphic top-form $\omega$ on $G/B_-$, holomorphic and non-vanishing on $R_{1,w_0} \subset G/B_-$, and with only simple poles on the boundary $\partial R_{1,w_0} = G/B_- \setminus R_{1,w_0}$.
\end{lemma}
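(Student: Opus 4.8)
The plan is to reinterpret the conditions on $\omega$ sheaf-theoretically and to reduce the statement to the assertion that the reduced boundary divisor $D := \partial R_{1,w_0}$ is anticanonical on $G/B_-$. A meromorphic top-form on $G/B_-$ is a rational section of the canonical bundle $K_{G/B_-}$, and the requirements that $\omega$ be holomorphic and non-vanishing on $R_{1,w_0}$ with at worst simple poles along the boundary translate precisely into $\mathrm{div}(\omega) \ge -D$ together with $\mathrm{div}(\omega)$ being supported on $D$. Equivalently, the admissible forms are exactly the global sections $H^0(G/B_-, K_{G/B_-}\otimes \O(D))$. Thus the lemma is equivalent to the claim that this space of sections is one-dimensional, the non-vanishing requirement then being automatic for any nonzero element.

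First I would identify the irreducible components of $D$ and their divisor classes. Since $R_{1,w_0}$ is the intersection of the open Schubert cell and the open opposite Schubert cell, its complement in $G/B_-$ is the union of the two sets of codimension-one boundary strata, namely the Schubert divisors $D_i^+$ and the opposite Schubert divisors $D_i^-$, one of each for every $i \in I$, and no others. Each such divisor is a codimension-one Schubert (resp. opposite Schubert) variety, whose class in $\mathrm{Pic}(G/B_-) \cong X^*(T)$ is the fundamental-weight line bundle $\L_{\varpi_i}$. Summing over $i$, the Schubert divisors contribute $\sum_{i} \L_{\varpi_i} = \L_{\rho}$ and the opposite Schubert divisors contribute a further $\L_{\rho}$, so that $[D] = \L_{2\rho}$.

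The key step is then the computation $[D] = -K_{G/B_-}$, which follows from the standard identification $K_{G/B_-} = \L_{-2\rho}$ of the canonical bundle of a flag variety together with $\rho = \sum_i \varpi_i$. Consequently $K_{G/B_-}\otimes \O(D) \cong \O_{G/B_-}$ is trivial, so there is a rational section $\omega_0$ of $K_{G/B_-}$ with $\mathrm{div}(\omega_0) = -D$ exactly; this gives existence, and $\omega_0$ has no zeros and precisely simple poles along each boundary component. For uniqueness, any admissible $\omega$ may be written $\omega = f\,\omega_0$ for a rational function $f$, and the condition $\mathrm{div}(\omega)\ge -D$ supported on $D$ forces $\mathrm{div}(f)\ge 0$; hence $f$ is a global regular function on the projective variety $G/B_-$ and therefore constant, proving the form is unique up to scalar.

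I expect the main obstacle to be the second and third steps together: correctly listing the boundary components with multiplicity one and, above all, the factor-of-two bookkeeping that makes the reduced sum of Schubert \emph{and} opposite Schubert divisors equal the full anticanonical class $\L_{2\rho}$ rather than $\L_{\rho}$. The delicate point is ensuring the conventions for $\mathrm{Pic}(G/B_-)\cong X^*(T)$ and for the canonical class are mutually consistent, so that the two families of divisors each contribute $\L_{\rho}$. A more laborious but fully self-contained alternative, which avoids invoking the canonical class, is to transport the explicit canonical form of Proposition~\ref{prop:canon} to $R_{1,w_0}$ via the embedding $u\mapsto uB_-/B_-$ and compute its order of vanishing directly along each boundary divisor using the toric charts, verifying it equals $-1$ in every case.
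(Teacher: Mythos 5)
Your proposal is correct and follows essentially the same route as the paper: both arguments rest on the identification $K_{G/B_-} \simeq \L_{-2\rho}$ together with the fact that the Schubert and opposite Schubert divisors each sum to class $\rho$, so that the reduced boundary is anticanonical, and both conclude uniqueness because a ratio of two such forms (equivalently your $f$ with $\mathrm{div}(f)\ge 0$) is a global regular function on the projective variety $G/B_-$, hence constant. Your sheaf-theoretic packaging via $H^0(G/B_-, K_{G/B_-}\otimes\O(D))\cong H^0(\O)$ is just a tidier phrasing of the paper's divisor-class bookkeeping, with the small bonus that it also delivers existence explicitly.
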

\begin{proof}
The canonical bundle of $G/B_-$ is the line bundle $L_{-2\rho} \simeq G \times_{B_-} \C_{2\rho}$, where $\C_{2\rho}$ is the one-dimensional representation of $B_-$ given by $b \mapsto \rho^2(b)$.  So the polar divisor of $\omega$ must have homology class $2\rho$, or equivalently, twice the sum of the Schubert divisors.  We have $\partial R_{1,w_0}= D_1 \cup D_2 \cup \cdots \cup D_r \cup D'_1 \cup \cdots \cup D'_r$, where the Schubert divisor $D_i$ and opposite Schubert divisor $D'_i$ are homologous.  Since $\omega$ only has simple poles, we see that it has a pole of order one on each $D_i$ or $D'_i$.  The ratio of any two such forms will then be a holomorphic function on $G/B_-$, which must be constant.
\end{proof}

By a dlog form on a smooth irreducible variety $Z$ we mean a form which is a wedge of forms $\frac{df}{f}$ where $f$ is a rational function $Z$.  We learnt the following proof from David Speyer.

\begin{lemma}\label{lem:simple}
A dlog form only has simple poles.
\end{lemma}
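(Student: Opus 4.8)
The plan is to reduce the statement to a purely local computation near the generic point of an arbitrary prime divisor $D \subset Z$, since the order of a pole is detected divisor by divisor. Write the dlog form as $\Omega = \frac{df_1}{f_1} \wedge \cdots \wedge \frac{df_k}{f_k}$. Because $Z$ is smooth, the local ring $\O_{Z,D}$ at the generic point of $D$ is a discrete valuation ring; I would fix a uniformizer $t$ and let $\nu_D$ be the associated valuation. Each rational function $f_i$ then factors as $f_i = u_i\, t^{m_i}$ with $m_i = \nu_D(f_i) \in \Z$ and $u_i$ a unit in $\O_{Z,D}$.

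First I would record the elementary identity
$$
\frac{df_i}{f_i} = \frac{du_i}{u_i} + m_i \frac{dt}{t},
$$
and observe that $\frac{du_i}{u_i}$ is regular at the generic point of $D$ (as $u_i$ is a unit there, both $u_i^{-1}$ and $du_i$ are regular), while $m_i \frac{dt}{t}$ is the only part carrying a pole along $D$, and that pole is simple. The heart of the argument is then to expand the wedge
$$
\Omega = \bigwedge_{i=1}^k \left( \frac{du_i}{u_i} + m_i \frac{dt}{t} \right)
$$
by multilinearity into a sum of terms indexed by the subset $S \subseteq \{1,\dots,k\}$ of factors that contribute their $\frac{dt}{t}$ piece. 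Any term with $|S| \geq 2$ contains the wedge $\frac{dt}{t} \wedge \frac{dt}{t} = 0$ and hence vanishes. What survives is the holomorphic term $\frac{du_1}{u_1}\wedge \cdots \wedge \frac{du_k}{u_k}$ together with the terms having exactly one factor of $\frac{dt}{t}$; each of the latter equals $\pm\, m_i\, \frac{dt}{t} \wedge (\text{wedge of regular forms})$, which has a pole of order at most one along $D$. Therefore $\Omega$ has at most a simple pole along $D$, and since $D$ was arbitrary this proves the claim.

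I do not expect a serious obstacle here: the only points deserving care are justifying that $\frac{du_i}{u_i}$ genuinely extends regularly across the generic point of $D$ and that $\frac{dt}{t}$ contributes exactly order one, both of which follow from $u_i$ being a unit in the DVR $\O_{Z,D}$ and from $t$ being a uniformizer. The decisive, and essentially trivial, input is the vanishing $\frac{dt}{t}\wedge\frac{dt}{t}=0$, which is precisely what prevents any accumulation of poles beyond first order.
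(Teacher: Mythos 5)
Your proof is correct, and it takes a genuinely different route from the paper's, even though both reduce to a local computation along an irreducible component $D$ of the polar locus. The paper first applies a monomial transformation to $(f_1,\ldots,f_k)$ — replacing them by Laurent monomials in themselves via a matrix in $GL_k(\Z)$ — so that only $f_1$ has nonzero valuation along $D$; by \eqref{eq:monomial} this changes $\omega$ only by a sign, and then writing $f_1 = h^\alpha g$ with $g$ a unit along $D$ makes the conclusion immediate, since every remaining factor $\frac{df_i}{f_i}$ is regular there and no cancellation argument is needed. You skip this normalization entirely: you factor every $f_i = u_i t^{m_i}$ in the discrete valuation ring $\O_{Z,D}$, expand the wedge multilinearly, and let the antisymmetry $\frac{dt}{t}\wedge\frac{dt}{t}=0$ annihilate exactly those terms that could have produced a pole of order greater than one. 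What the paper's reduction buys is a one-line final computation, at the cost of invoking \eqref{eq:monomial} together with the implicit fact that any integer vector of valuations $(m_1,\ldots,m_k)$ can be carried to $(d,0,\ldots,0)$ by $GL_k(\Z)$ (essentially Smith normal form or B\'ezout). What your argument buys is self-containedness: it is purely local at the generic point of $D$, requires no global change of the defining functions, and its only nontrivial inputs are the unit-times-uniformizer factorization in a DVR and the vanishing of a repeated wedge factor. Both arguments, as written, apply to dlog forms of any degree, not only top forms, and both correctly observe that pole order is detected divisor by divisor at the generic point.
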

\begin{proof}
Let $\omega = \frac{df_1}{f_1} \frac{df_2}{f_2} \cdots \frac{df_k}{f_k}$, and let $D$ be an irreducible component of the polar divisor.  By a monomial transformation of $(f_1,f_2,\ldots,f_k)$ we may assume that only $f_1$ vanishes on $D$.  (By \eqref{eq:monomial} this changes $\omega$ by a constant.)  But then $f_1 = h^\alpha g$ where $h$ is the function cutting out $D$, and $g$  neither vanishes or has poles on $D$.  So 
$$
\frac{df_1}{f_1} = \alpha \frac{dh}{h} + \frac{dg}{g},$$
and it is clear that $\omega$ has a pole of order at most one along $D$.
\end{proof}

Combining Lemmas \ref{lem:boundary} and \ref{lem:simple},
\begin{proposition}
Any holomorphic, non-vanishing dlog form on $U^{w_0}$ is a scalar multiple of the canonical form $\omega_U$.  In particular $(\eta^{-1})_*(\omega_X)$ is a scalar multiple of $\omega_U$.
\end{proposition}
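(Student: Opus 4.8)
The plan is to reduce everything to the two preceding lemmas via the open dense embedding $U^{w_0} \cong R_{1,w_0} \subset G/B_-$. First I would note that the canonical form itself already fits the geometric characterization: $\omega_U$ is the pushforward of the torus form $\omega_S = \frac{dx_1}{x_1}\cdots \frac{dx_\ell}{x_\ell}$ under a chart $x_\i$, so it is manifestly a wedge of dlog's, and by the discussion opening this subsection (Proposition~\ref{prop:codim} together with Proposition~\ref{prop:canon}) it is holomorphic and non-vanishing on all of $U^{w_0}$. Hence $\omega_U$ satisfies the hypotheses of Lemma~\ref{lem:boundary} and agrees up to a scalar with the distinguished meromorphic form $\omega$ produced there.

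Next, given an arbitrary holomorphic, non-vanishing dlog form $\omega'$ on $U^{w_0}$, I would view it as a meromorphic top-form on $G/B_-$. The point to check is that $\omega'$ is still a dlog form when regarded on $G/B_-$: writing $\omega' = \bigwedge_k \frac{df_k}{f_k}$ with $f_k$ rational on $U^{w_0}$, each $f_k$ is rational on $G/B_-$ since $U^{w_0}=R_{1,w_0}$ is open and dense, so the two varieties share a function field. Lemma~\ref{lem:simple}, applied on the smooth irreducible variety $G/B_-$, then shows $\omega'$ has only simple poles, and these can only lie along $\partial R_{1,w_0}$ since $\omega'$ is holomorphic on $R_{1,w_0}$ by hypothesis. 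Thus $\omega'$ meets the hypotheses of Lemma~\ref{lem:boundary}, so it is a scalar multiple of $\omega$, and therefore of $\omega_U$. This proves the first assertion.

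For the ``in particular'' statement I would verify that $(\eta^{-1})_*(\omega_X) = \eta^*(\omega_X)$ is again a holomorphic, non-vanishing dlog form on $U^{w_0}$ and then quote the first assertion. That $\omega_X$ is a holomorphic, non-vanishing dlog form on $X$ follows from the exact $X$-analogues of the facts used above: it is the pushforward of $\omega_S$ under a chart $x_{-\i}$, hence dlog, and it is well-defined with no zeroes on $X$ by Proposition~\ref{prop:codim} and Proposition~\ref{prop:canon}. Since $\eta:U^{w_0}\to X$ is a biregular isomorphism (Proposition~\ref{prop:eta}), the pullback $\eta^*(\omega_X)$ stays holomorphic and non-vanishing on $U^{w_0}$, and pulling back a wedge of dlog's along $\eta$ again produces a dlog form. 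Applying the first assertion then finishes the proof; I note this yields Proposition~\ref{prop:canonicallyequiv} without the explicit Jacobian computation.

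The step that I expect to require the most care is the transfer of the dlog property across these birational identifications. One must be sure that ``dlog on $U^{w_0}$'' genuinely implies ``dlog on $G/B_-$'', so that Lemma~\ref{lem:simple} really does bound the pole orders along the boundary Schubert and opposite-Schubert divisors; and, for the corollary, that the twist map introduces no spurious zeroes or poles inside $U^{w_0}$, which is precisely why biregularity of $\eta$ rather than mere birationality is invoked. Everything else is formal, the genuine content residing in Lemmas~\ref{lem:boundary} and~\ref{lem:simple}.
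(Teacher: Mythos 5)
Your proposal is correct and takes essentially the same route as the paper: the paper's proof consists precisely of combining Lemma~\ref{lem:boundary} (uniqueness of the meromorphic top-form on $G/B_-$ with simple poles along $\partial R_{1,w_0}$) with Lemma~\ref{lem:simple} (dlog forms have only simple poles), exactly as you do. Your extra steps---checking that $\omega_U$ is itself a holomorphic non-vanishing dlog form via Propositions~\ref{prop:codim} and~\ref{prop:canon}, transferring the dlog property through the common function field of $U^{w_0}$ and $G/B_-$, and using biregularity of $\eta$ for the final clause---just make explicit the details the paper leaves implicit.
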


This form, or more precisely its inverse which is an anticanonical section, is studied for partial flag varieties in \cite{KLS}, in the setting of Frobenius splittings.

\subsection{Example}
Let us illustrate Proposition \ref{prop:canonicallyequiv} for the case $G=SL_3$.  Pick the reduced word $\i=(2,1,2)$.  Then 
$$
u:= x_\i(a,b,c) =\left(
\begin{array}{ccc}
 1 & b & b c \\
 0 & 1 & a+c \\
 0 & 0 & 1 \\
\end{array}
\right)
$$
Using 
$$
w_0 =\left(
\begin{array}{ccc}
 0 & 0 & 1 \\
 0 & -1 & 0 \\
 1 & 0 & 0 \\
\end{array}
\right)
$$
one computes that
$$
\eta(u) = \left(
\begin{array}{ccc}
 b c & 0 & 0 \\
 a+c & \frac{a}{c} & 0 \\
 1 & \frac{1}{c} & \frac{1}{a b} \\
\end{array}
\right)
\qquad \text{and} \qquad
\tau(u) =\left(
\begin{array}{ccc}
 1 & \frac{1}{c} & \frac{1}{a b} \\
 0 & 1 & \frac{a+c}{a b} \\
 0 & 0 & 1 \\
\end{array}
\right).
$$
Note that both $u$ and $\eta(u)$ are totally nonnegative in the usual sense (all minors are nonnegative) when $(a,b,c) \in \R_{>0}^3$.  We also compute that that $\eta(u) = y_1(e) y_2(f) y_1(g)$,
where $e = 1/b$, $f = 1/ab$, and $g = 1/c$.  Since the map $(a,b,c) \mapsto (e,f,g)$ is an invertible monomial transformation which sends $\R_{>0}^3 \to \R_{>0}^3$ we see that $\eta$ sends $U^{w_0}_{>0}$ to $X_{>0}$ and that $$
\frac{da}{a} \frac{db}{b} \frac{dc}{c} = \pm \frac{de}{e} \frac{df}{f} \frac{dg}{g}.
$$

\section{Geometric crystals}
\label{sec:geomcrystal}
We follow \cite{BK2} to introduce geometric crystals.  For reasons of brevity we have chosen to omit the notions of unipotent bicrystals and unipotent crystals.
\subsection{Decorated geometric crystals}
A \defn{geometric crystal} is a $5$-tuple $(Y,\gamma,\ph_i,\ep_i,e_i \mid i \in I)$ where
\begin{enumerate}
\item
$Y$ is an irreducible algebraic variety over $\C$
\item 
$\gamma: Y \to T$ and $\ep_i, \ph_i: Y \to \C$ are rational maps
\item
$e_i: \C^* \times Y \to Y$ is a rational action of $\C^*$ on $Y$, where we write $e_i^c(y) = e_i(c,y)$.
\end{enumerate}
satisfying 
\begin{align*}
\alpha_i(\gamma(y)) &= \ep_i(y)/\ph_i(y), \qquad \gamma(e_i^c(y)) = \alpha_i^\vee(c) \gamma(y) \\
\ep_i(e_i^c(y)) &= c\ep_i(y), \qquad \ph_i(e_i^c(y)) =c^{-1} \ph_i(y)
\end{align*}
and the geometric braid relations
\begin{align*}
e_i^{c_1}  e_j^{c_2} &= e_j^{c_2}  e_i^{c_1}  & \mbox{if $\ip{\alpha_i,\alpha_j^\vee} = 0$} \\
e_i^{c_1}  e_j^{c_1c_2}  e_i^{c_2} &=e_j^{c_2}  e_i^{c_1c_2}  e_j^{c_1} & \mbox{if $\ip{\alpha_j,\alpha_i^\vee} = \ip{\alpha_i,\alpha_j^\vee} = -1$}\\
e_i^{c_1}  e_j^{c_1^2c_2}  e_i^{c_1c_2} e_j^{c_2} &=e_j^{c_2}  e_i^{c_1c_2}  e_j^{c_1^2c_2}  e_i^{c_1} & \mbox{if $\ip{\alpha_j,\alpha_i^\vee} = 2\ip{\alpha_i,\alpha_j^\vee} = -2$} \\
e_i^{c_1}  e_j^{c_1^3c_2}  e_i^{c_1^2c_2}  e_j^{c_1^3c_2^2}  e_i^{c_1c_2}  e_j^{c_2} &=e_j^{c_2}  e_i^{c_1c_2}  e_j^{c_1^3c_2^2}   e_i^{c_1^2c_2}  e_j^{c_1^3c_2}  e_j^{c_1} & \mbox{if $\ip{\alpha_j,\alpha_i^\vee} = 3\ip{\alpha_i,\alpha_j^\vee} = -3$}
\end{align*}
%\begin{align*}
%e_i^{c_1} \circ e_j^{c_2} &= e_j^{c_2} \circ e_i^{c_1}  & \mbox{if $\ip{\alpha_i,\alpha_j^\vee} = 0$} \\
%e_i^{c_1} \circ e_j^{c_1c_2} \circ e_i^{c_2} &=e_j^{c_2} \circ e_i^{c_1c_2} \circ e_j^{c_1} & \mbox{if $\ip{\alpha_i,\alpha_j^\vee} = \ip{\alpha_i,\alpha_j^\vee} = -1$}\\
%e_i^{c_1} \circ e_j^{c_1^2c_2} \circ e_i^{c_1c_2}\circ e_j^{c_2} &=e_j^{c_2} \circ e_i^{c_1c_2} \circ e_j^{c_1^2c_2} \circ e_i^{c_1} & \mbox{if $\ip{\alpha_j,\alpha_i^\vee} = 2\ip{\alpha_i,\alpha_j^\vee} = -2$} \\
%e_i^{c_1} \circ e_j^{c_1^3c_2} \circ e_i^{c_1^2c_2} \circ e_j^{c_1^3c_2^2} \circ e_i^{c_1c_2} \circ e_j^{c_2} &=e_j^{c_2} \circ e_i^{c_1c_2} \circ e_j^{c_1^3c_2^2}  \circ e_i^{c_1^2c_2} \circ e_j^{c_1^3c_2} \circ e_j^{c_1} & \mbox{if $\ip{\alpha_j,\alpha_i^\vee} = 3\ip{\alpha_i,\alpha_j^\vee} = -3$}
%\end{align*}

In \cite{BK2}, the rational functions $\ph_i,\ep_i$ are allowed to be zero, but for our purposes, it is simpler to always assume that $\ph_i, \ep_i$ are non-zero.  

A \defn{decorated geometric crystal} is a geometric crystal $(Y,\gamma,\ph_i,\ep_i,e_i \mid i \in I)$ equipped with a rational function $\f:Y \to T$, called the \defn{decoration}, which satisfies
\begin{equation}\label{eq:dec}
\f(e_i^c(y)) = \f(y) + \frac{c-1}{\ph_i(y)}+\frac{c^{-1}-1}{\ep_i(y)}.
\end{equation}

\subsection{The geometric crystal with highest weight $t$}
Let $\X = T \cdot X = B_- \cap UT w_0U$.  Define the \defn{highest weight map} $\hw:\X \to T$ by 
$$
\hw(u_1 t \bar w_0 u_2) = t
$$
and for $t \in T$, let $X_t = \hw^{-1}(t) = t \cdot X =X\cdot (w_0 \cdot t)$.  We shall call $X_t$ the \defn{geometric crystal with highest weight $t$.}  Clearly all the fibers $X_t$ are isomorphic.

Define \begin{enumerate}
\item the weight morphism $\gamma:X \to T$ by
$$
\gamma(x) = x \mod U_- \in B_-/U_- \simeq T;
$$
\item
$$
\ph_i(x) = \chi_i^-(x) \qquad \ep_i(x) = \ph_i(x)\alpha_i(\gamma(x));
$$
\item
$$
e_i^c(x) = x_i\left(\frac{c-1}{\ph_i(x)}\right) \cdot x \cdot x_i\left(\frac{c^{-1}-1}{\ep_i(x)}\right);
$$
\item
$$
\f(u_1tw_0u_2) = \chi(u_1) + \chi(u_2).
$$
\end{enumerate}
The fact that $e_i^c(x) \in \X$ follows from Lemma \ref{lem:Uaction} below.  There is a natural projection $\pr_i: B_- \to B_- \cap \phi_i(SL_2)$, and if 
$$
\pr_i(x) = \left( \begin{array}{cc} a & 0 \\ b & a^{-1} \end{array}\right)
$$
then $\ph(x) = b/a$ and $\ep(x) = ab$.

The first statement of the following is due to Berenstein and Kazhdan \cite{BK2}.  The second statement is due to Chhaibi \cite{Chh}.
\begin{theorem}\label{thm:geomcrystal}
Each $(X_t,\gamma,\ph_i,\ep_i,e_i, \f)$ is a decorated geometric crystal.  Furthermore, the geometric crystal action $e_i^c$ preserves the canonical volume form $\omega_t$.
\end{theorem}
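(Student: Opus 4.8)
The plan is to treat the two assertions separately. For the statement that $(X_t,\gamma,\ph_i,\ep_i,e_i,\f)$ is a decorated geometric crystal I would reduce everything to a rank-one computation through the projection $\pr_i\colon B_-\to B_-\cap\phi_i(SL_2)$. The functions $\ph_i(x)=\chi_i^-(x)$, $\ep_i(x)=\ph_i(x)\alpha_i(\gamma(x))$ and the $i$-part of $\gamma$ depend only on $\pr_i(x)=\begin{pmatrix} a & 0\\ b & a^{-1}\end{pmatrix}$, and a direct $SL_2$ multiplication shows $e_i^c$ sends this to $\begin{pmatrix} ca & 0\\ b & (ca)^{-1}\end{pmatrix}$. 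From this the identities $\gamma(e_i^c(x))=\alpha_i^\vee(c)\gamma(x)$, $\ph_i(e_i^c(x))=c^{-1}\ph_i(x)$, $\ep_i(e_i^c(x))=c\,\ep_i(x)$ and $\alpha_i(\gamma)=\ep_i/\ph_i$ all drop out, while the geometric braid relations I would simply cite from \cite{BK2} (they are a consequence of Proposition \ref{prop:xx}). The decoration relation \eqref{eq:dec} is then clean: writing $x=u_1t\bar w_0u_2$, we have $e_i^c(x)=\bigl(x_i(\tfrac{c-1}{\ph_i})u_1\bigr)\,t\bar w_0\,\bigl(u_2x_i(\tfrac{c^{-1}-1}{\ep_i})\bigr)$, and since $e_i^c(x)\in\X$ (Lemma \ref{lem:Uaction}) this is the unique factorization of an element of $UT\bar w_0U$; as $\chi=\sum_i\chi_i$ is an additive character of $U$, $\f(e_i^c(x))=\chi(u_1)+\chi(u_2)+\tfrac{c-1}{\ph_i}+\tfrac{c^{-1}-1}{\ep_i}$, which is exactly \eqref{eq:dec}.

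The substance is the invariance of $\omega_t$. Because the canonical form is independent of the reduced word (Proposition \ref{prop:canon}), I am free to compute $e_i^c$ in the single most convenient toric chart $x_{-\i}(a_1,\dots,a_\ell)$, say for a word $\i$ beginning with $i$, in which $\omega_t=\tfrac{da_1}{a_1}\cdots\tfrac{da_\ell}{a_\ell}$. The rank-one computation above governs the left and right multiplications by $x_i(\cdot)$; re-expressing $e_i^c(x)$ back in the coordinates $(a_1,\dots,a_\ell)$, I expect coordinates labelled by letters far from $i$ to be fixed, so that the transformation reduces to a small block controlled by the single function $\ph_i$. In logarithmic coordinates the Jacobian should then be the identity plus a structured low-rank term, and preservation of $\omega_t$ is the assertion that its determinant is identically $1$. (In the $A_2$ example with $\i=(1,2,1)$ one computes $a_2'=a_2$, $a_1'=a_1(1+m)/(c+m)$, $a_3'=a_3(c+m)/(c(1+m))$ with $m=a_1a_2^{-1}a_3$, and the $2\times2$ logarithmic Jacobian in $(a_1,a_3)$ indeed has determinant $1$.)

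The main obstacle is to prove $\det\equiv 1$ uniformly rather than example by example: the individual entries of the logarithmic Jacobian blow up along the divisor $\{\ph_i=0\}\subset X_t$, yet the determinant must stay equal to $1$, so the real content is a cancellation of poles along $\{\ph_i=0\}$ and $\{\ep_i=0\}$. I would try to make this structural by reducing, via the braid moves, to the rank-one and rank-two ($A_2,B_2,G_2$) factorizations, where it becomes a finite check propagated along minimal moves. A more conceptual route, in the spirit of this paper's discussion of $\omega_t$, is to argue by uniqueness: for any dominant rational map the pullback of $\bigwedge\tfrac{df_j}{f_j}$ is $\bigwedge\tfrac{d(f_j\circ e_i^c)}{f_j\circ e_i^c}$, so $(e_i^c)^*\omega_t$ is again a dlog form, has only simple poles (Lemma \ref{lem:simple}), and extends meromorphically to $G/B_-$; if one checks that it has no zero or pole in the interior $R_{1,w_0}\cong X_t$ — which is precisely the same pole-cancellation — then by Lemma \ref{lem:boundary} it agrees with $\omega_t$ up to a scalar, and evaluating the logarithmic Jacobian at one convenient point (or using that $c\mapsto(e_i^c)^*\omega_t/\omega_t$ is a character of $\C^*$ that is trivial at $c=1$) pins the scalar to $1$. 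Either way the crux is this interior pole-cancellation, which I regard as the heart of the matter.
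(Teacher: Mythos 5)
Your treatment of the decorated-geometric-crystal axioms is essentially the paper's own: the rank-one reduction through $\pr_i$ is exactly the content of Lemma \ref{lem:Uaction} (which the paper proves by the same $SL_2$ computation), the braid relations are likewise cited away (the paper cites \cite{BK1}), and the decoration identity \eqref{eq:dec} follows from additivity of $\chi$ just as you say. Your $A_2$ Jacobian check is also correct. But the ``furthermore'' clause --- which you rightly call the substance --- is not proved: you yourself flag that the uniform identity $\det\equiv 1$, equivalently the cancellation of zeros and poles of $(e_i^c)^*\omega_t$ along the interior divisors $\{\ph_i=0\}$ and $\{\ep_i=0\}$, is ``the crux,'' and neither of your routes closes it. Route (a) does not obviously reduce to rank two: as your own example shows, in the chart $x_{-\i}$ the action mixes coordinates through $m=a_1a_2^{-1}a_3$, and single braid moves of Proposition \ref{prop:xx} do not propagate into such a statement in any evident way. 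Route (b) fails precisely at the step you isolate: Lemma \ref{lem:boundary} needs the form to be holomorphic and \emph{non-vanishing} on all of $R_{1,w_0}$, and Lemma \ref{lem:simple} only bounds pole orders --- a dlog form can perfectly well vanish along a divisor (e.g.\ $d\log(x^2+1)$) --- while the fact that $e_i^c$ is an isomorphism off $\{\ph_i\ep_i=0\}$ says nothing about those two divisors. (Also, the parenthetical fallback is circular: every algebraic character of $\C^*$ is trivial at $c=1$, so triviality there pins down nothing.)

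The idea you are missing is the twist map $\eta$ of Proposition \ref{prop:eta}. The paper performs no Jacobian computation on $X_t$ at all: it conjugates the crystal action through $\eta$ and invokes the fact (due to Chhaibi \cite{Chh}) that if $u=x_\i(a_1,\dots,a_\ell)$ with $i=i_\ell$, then $\eta^{-1}\circ e_i^c\circ\eta(u)=x_\i(ca_1,a_2,\dots,a_\ell)$; that is, in the \emph{twisted} chart the action is a scaling of a single coordinate, which manifestly preserves $\frac{da_1}{a_1}\cdots\frac{da_\ell}{a_\ell}$. Since $\eta$ is a canonically positive isomorphism (Proposition \ref{prop:canonicallyequiv}), i.e.\ it matches $\omega_U$ with $\omega_X$, preservation of $\omega_t$ follows in one line. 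Your difficulty is an artifact of working in the untwisted chart $x_{-\i}$ on $X$, where the action is genuinely nonlinear; the pole cancellation you cannot establish is exactly what conjugation by $\eta$ renders automatic. If you want to salvage your framework, the efficient repair is not to chase the determinant but to prove the single structural identity that $\eta^{-1}\circ e_i^c\circ\eta$ is a one-coordinate scaling --- a computation in the spirit of Lemmas \ref{lem:monom} and \ref{lem:easy} --- and then quote Proposition \ref{prop:canonicallyequiv}.
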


There is a rational $U$-action on $X$ given by
$$
u(x):=u \cdot x \cdot \pi^+(u\cdot x)^{-1}
$$
where $\pi^+:B_- \cdot U \to U$ is the projection onto the right factor.

\begin{lemma}[{\cite[Lemma 2.4]{BK1}}]\label{lem:Uaction}
This action is given explicitly by
$$
(x_i(a))(x) = x_i(a) \cdot x \cdot x_i(a')
$$
where 
$$
a'=-\frac{a}{(1+a\ph_i(x))\alpha_i(\gamma(x))}.
$$
Furthermore, we have
$$
\gamma(x_i(a)(x)) =  \alpha_i^\vee(1+a\ph_i(x)) \gamma(x)
\qquad
\text{and}
\qquad
\frac{1}{\ph_i(x_i(a)(x))} = \frac{1}{\ph_i(x)} + a.
$$
\end{lemma}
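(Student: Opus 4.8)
The plan is to reduce the entire computation to the rank-one subgroup $\phi_i(SL_2)$, where it becomes an explicit $2\times 2$ factorization. The conceptual device is the minimal parabolic $P_i^- = B_- \cup B_- \bar s_i B_-$, which contains both $B_-$ and the root subgroup $x_i(\cdot)$: writing $P_i^- = U_{P_i} \rtimes L_i$ with unipotent radical $U_{P_i} \subset U_-$ and Levi $L_i$ generated by $\phi_i(SL_2)$ together with its connected center $Z = Z(L_i)^\circ$, both $x \in B_-$ and $x_i(a)$ lie in $P_i^-$. I expect the key structural claim to be that $\pi^+(x_i(a)\cdot x)$ lies in the single root subgroup $x_i(\cdot)$; granting this, the parameter $a'$ is forced, and the formulas for $\gamma$ and $\ph_i$ drop out of one matrix identity.

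Concretely, I would first decompose $x = v \cdot l$ with $v \in U_{P_i}$ and $l \in B_- \cap L_i$, using that $U_{P_i}$ is normal in $B_-$. Since the Levi normalizes its unipotent radical, $x_i(a)\, v\, x_i(a)^{-1} \in U_{P_i} \subset B_-$, so $x_i(a)\cdot x = \bigl(x_i(a) v x_i(a)^{-1}\bigr)\cdot\bigl(x_i(a) l\bigr)$ and the first factor is harmless. As $x_i(a)$ centralizes $Z$ and $L_i$ is generated by $\phi_i(SL_2)$ and $Z$, the remaining product $x_i(a) l$ is governed entirely by $\pr_i(x)$ inside $\phi_i(SL_2)$. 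Writing $\pr_i(x) = \phi_i\!\begin{pmatrix} a_0 & 0 \\ b_0 & a_0^{-1}\end{pmatrix}$ (so that $\ph_i(x) = b_0/a_0$ and $\alpha_i(\gamma(x)) = a_0^2$), the task is the $LU$-type factorization
\[
\begin{pmatrix} 1 & a \\ 0 & 1\end{pmatrix}\begin{pmatrix} a_0 & 0 \\ b_0 & a_0^{-1}\end{pmatrix} = \begin{pmatrix} p & 0 \\ b_0 & p^{-1}\end{pmatrix}\begin{pmatrix} 1 & r \\ 0 & 1\end{pmatrix}, \qquad p = a_0 + a b_0, \quad r = \frac{a}{a_0\, p},
\]
which is precisely the $SL_2$ identity underlying \eqref{eq:xy} of Proposition \ref{prop:xy}. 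Applying $\phi_i$ exhibits $x_i(a)\cdot x$ as an element of $B_-$ times $x_i(r)$, so $\pi^+(x_i(a)\cdot x) = x_i(r)$ and hence $a' = -r = -\tfrac{a}{(1 + a\ph_i(x))\,\alpha_i(\gamma(x))}$, using $a_0\,p = a_0^2(1 + ab_0/a_0)$.

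Finally, I would read off the surviving factor $u(x) = x_i(a)\, x\, x_i(-r) \in B_-$. Its $\phi_i(SL_2)$-diagonal is $\alpha_i^\vee(p) = \alpha_i^\vee(a_0)\,\alpha_i^\vee(1 + a\ph_i(x))$, while the $U_{P_i}$- and $Z$-parts are unchanged modulo $U_-$; this gives $\gamma(x_i(a)(x)) = \alpha_i^\vee(1 + a\ph_i(x))\,\gamma(x)$. The new lower-triangular-to-diagonal ratio is $b_0/p = \ph_i(x)/(1 + a\ph_i(x))$, i.e. $1/\ph_i(x_i(a)(x)) = 1/\ph_i(x) + a$. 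The main obstacle is the structural claim that $\pi^+(x_i(a)\cdot x)$ is a single-root-subgroup element and that the torus shift is purely along $\alpha_i^\vee$; once the parabolic/Levi bookkeeping pins this down, everything else is the displayed $2\times 2$ calculation.
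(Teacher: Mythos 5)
Your proof is correct, and its computational core coincides with the paper's: both arguments come down to showing that $\pi^+(x_i(a)\cdot x)$ lies in the one-parameter subgroup $x_i(\cdot)$, and then run the identical rank-one computation in $\phi_i(SL_2)$ — the paper solves for the $a'$ making $x_i(a)\,x\,x_i(a')$ lower triangular with $x = \left(\begin{array}{cc} b&0\\ c&b^{-1}\end{array}\right)$, which is exactly your LU factorization read in reverse. Where you genuinely differ is the structural reduction. The paper writes $x = ut$ with $u = y_{i_1}(a_1)\cdots y_{i_\ell}(a_\ell)$ and pushes $x_i(a)$ through the factors via Proposition \ref{prop:xy} (commutation with $y_j$ for $j\neq i$, and relation \eqref{eq:xy} at each $y_i$); this is a generic, coordinate-dependent argument. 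You instead invoke the Levi decomposition $P_i^- = U_{P_i}\rtimes L_i$ of the minimal parabolic: the $U_{P_i}$-part of $x$ conjugates harmlessly across $x_i(a)$ because $U_{P_i}$ is normal in $P_i^-$ and contained in $U_-$, and what remains lives in $L_i = Z(L_i)^\circ\cdot\phi_i(SL_2)$, where $x_i(a)$ centralizes $Z(L_i)^\circ$. Your route is coordinate-free, valid for every $x \in B_-$ rather than for a generically factored $u$, and it simultaneously justifies the paper's unproved standing assertion that the action and the functions $\gamma,\ph_i$ see $x$ only through $\pr_i(x)$ and $\gamma(x)$; the paper's route is more elementary, using nothing beyond the relations already listed in Section \ref{sec:background}. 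The one point you should make explicit is that the decomposition $l = z\,m$ with $z \in Z(L_i)^\circ$, $m \in \phi_i(SL_2)$ is unique only up to the possible common factor $\phi_i(-I) = \alpha_i^\vee(-1)$; this ambiguity is harmless because every quantity you extract ($b_0/a_0$, $a_0^2$, and $z\,\alpha_i^\vee(p)$) is invariant under $(a_0,b_0)\mapsto(-a_0,-b_0)$.
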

\begin{proof}
Let $x = ut$ for $u \in U_-$ and $t \in T$.  Supposing that $u = y_{i_1}(a_1) \cdots y_{i_\ell}(a_\ell)$, the right factor $\pi^+(x_i(a) \cdot x)^{-1}$ can be computed using Proposition \eqref{prop:xy} and it is clear that it is of the form $x_i(a')$ for some $a'$.  To compute $a'$ it is enough to suppose $G = SL_2$ and let $x =\left(\begin{array}{cc} b & 0 \\ c & b^{-1} \end{array}\right)$.  Then
$$
\left(\begin{array}{cc} 1 & a \\ 0 & 1 \end{array}\right)\left(\begin{array}{cc} b & 0 \\ c & b^{-1} \end{array}\right) \left(\begin{array}{cc} 1 & a' \\ 0 & 1 \end{array}\right) = \left(\begin{array}{cc} b+ac & a'(b+ac)+ab^{-1} \\ c & b^{-1}+a'c \end{array}\right)
$$
so that $a' = -\frac{ab^{-1}}{b+ac}$.  But
$$
\ph_i\left(\left(\begin{array}{cc} b & 0 \\ c & b^{-1} \end{array}\right)\right) = \frac{c}{b} \qquad \alpha \circ \gamma\left(\left(\begin{array}{cc} b & 0 \\ c & b^{-1} \end{array}\right)\right) = b^2.
$$
The value $a'$ can also be directly computed using \eqref{eq:xy}.
% (see the proof of Lemma \ref{lem:U-action} for an analogous computation).  
Similarly, $1+a\ph_i(x) = 1+ac/b$, so that
$$
\gamma(x_i(a)(x)) = \alpha_i^\vee(1+a\ph_i(x))) \gamma(x).
$$
Also $\ph_i(x_i(a)(x)) = c/(b+ac)$, satisfying the claimed equality.
%or directly from \eqref{eq:xy}, we have
%$$
%x_i(a)(u) = u'\alpha_i^\vee(1+a\chi_i(u))) t  x_i(-a')
%$$
%so that
\end{proof}

\begin{proof}[Sketch proof of Theorem \ref{thm:geomcrystal}]
It  follows from Lemma \ref{lem:Uaction} and the definition that the $\C^*$-action $e_i$ preserves $X_t$.  Substituting the definition of $e_i$ into Lemma \ref{lem:Uaction}, we see that all the relations of a geometric crystal are satisfied except possibly the geometric braid relations relating $e^c_i$ and $e^{c'}_j$.  We omit the proof of these relations, which are verified in \cite{BK1}.

Finally, we have $\chi(x_i(a)u) = a +\chi(u)$, so that \eqref{eq:dec} just follows from the definition of $f$ and $e_i$.

The final statement follows from the fact that if $u = x_\i(a_1,a_2,\ldots,a_\ell)$ with $i = i_\ell$ then $\eta^{-1} \circ e_i^c \circ \eta(u) = x_\i(ca_1,a_2,\ldots,a_\ell)$ which preserves the logarithmic volume form. See \cite{Chh}.
\end{proof}

\subsection{Weight map in coordinates}
The following explicit formula for $\gamma:X \to T$ will be helpful.  

\begin{prop}\label{prop:gamma}
Suppose $u = x_\i(a_1,a_2,\ldots,a_\ell) \in U^{w_0}$.  Then
$$
\gamma(\eta(u)) = \prod_{k=1}^\ell \beta_{k}^\vee(a_k)
$$
where $\beta_k = s_{i_1} \cdots s_{i_{k-1}}\alpha_{i_k}$.
\end{prop}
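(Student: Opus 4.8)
The plan is to reduce everything to the monomial factorization of Lemma~\ref{lem:monom} and then to show that the transposed factor contributes nothing to the weight. By that lemma (applied with $k=\ell$) I may write
$$
u = h\, v^T, \qquad h := \prod_{k=1}^\ell \beta_k^\vee(a_k)\in T, \qquad v := x_{-\i^{\op}}(b_\ell,\ldots,b_1)\in X ,
$$
using the transpose anti-automorphism $g\mapsto g^T$ from the proof of that lemma. The whole point will be that $h$ is exactly the weight $\gamma(\eta(u))$ and that $v^T$ is an ``error term'' annihilated by the twist.

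First I would peel off the torus factor. Since $h\in T\subseteq B_-$, left multiplication by $h$ preserves the factorization $B_-\cdot U$, so $\pi^-$ is left $T$-equivariant: $\pi^-(hg)=h\,\pi^-(g)$ for $g\in B_-\cdot U$, and hence $\gamma(\pi^-(hg))=h\,\gamma(\pi^-(g))$. Applying this to $g=v^T\bar w_0$ gives
$$
\gamma(\eta(u)) = \gamma\bigl(\pi^-(u\bar w_0)\bigr) = \gamma\bigl(\pi^-(h\,v^T\bar w_0)\bigr) = h\cdot\gamma\bigl(\pi^-(v^T\bar w_0)\bigr),
$$
so the claim reduces to showing that the correction $\gamma(\pi^-(v^T\bar w_0))$ is the identity of $T$.

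The key step is then to check that $v^T\bar w_0\in U_-\cdot U$, whence its $B_-$-part lies in $U_-$ and its weight is trivial. Since $v\in X\subseteq U\bar w_0 U$, write $v=u_1\bar w_0 u_2$ with $u_1,u_2\in U$. Transposing reverses the order and sends $U\to U_-$; moreover $\bar s_i^T=\bar s_i^{-1}$, so $\bar w_0^T=\bar w_0^{-1}$, giving $v^T = u_2^T\,\bar w_0^{-1}\,u_1^T$ with $u_1^T,u_2^T\in U_-$. Therefore
$$
v^T\bar w_0 = u_2^T\,(\bar w_0^{-1}u_1^T\bar w_0),
$$
and since conjugation by a lift of $w_0$ exchanges $U_-$ and $U$ (because $w_0$ sends the positive roots to the negative ones), the second factor lies in $U$. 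Thus $v^T\bar w_0\in U_-\cdot U$, so $\pi^-(v^T\bar w_0)\in U_-$ and $\gamma(\pi^-(v^T\bar w_0))=1$. Combining with the previous display yields $\gamma(\eta(u)) = h = \prod_{k=1}^\ell \beta_k^\vee(a_k)$.

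I expect the only genuine obstacle to be the transpose bookkeeping: verifying $\bar s_i^T=\bar s_i^{-1}$ (hence $\bar w_0^T=\bar w_0^{-1}$) and that conjugation by $\bar w_0$ swaps $U$ and $U_-$, together with the $T$-equivariance of $\pi^-$. Each ingredient is elementary, but they must be tracked in the right order. As a sanity check one can run the argument against the $SL_3$ example with $\i=(2,1,2)$, where the formula predicts $\gamma(\eta(u))=\diag(bc,\,a/c,\,1/(ab))$, matching the diagonal of the explicitly computed $\eta(u)$.
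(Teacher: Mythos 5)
Your proof is correct and takes essentially the same route as the paper's: both apply Lemma \ref{lem:monom} to factor $u = h\,v^T$ with $h = \prod_k \beta_k^\vee(a_k)$, peel off $h$ by $T$-equivariance of $\pi^-$, and conclude because $v^T\bar w_0 \in U_-\cdot U$ has trivial torus part. The only difference is that you spell out the transpose bookkeeping ($\bar s_i^T = \bar s_i^{-1}$, hence $\bar w_0^T = \bar w_0^{-1}$, and conjugation by $\bar w_0$ swapping $U$ and $U_-$) that the paper compresses into the single assertion $b\bar w_0 \in U_-\cdot U$.
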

\begin{proof}
Use Lemma \ref{lem:monom} to write $u = tb$ where $t = \prod_{k=1}^\ell \beta_{k}^\vee(a_k)$ and $b = x_{-\i^{\op}}(b_k,\ldots,b_1)^T$.  Since $b \in B \cap U_- w_0 U_-$, we have that $bw_0 \in U_- \cdot U$, thus 
\begin{equation*}
\gamma(\eta(u)) = \gamma(\pi_-(uw_0)) = \gamma(\pi_-(tbw_0)) = t. \qedhere
\end{equation*}
%For $x = x_{-\i}(b_1,b_2,\ldots,b_\ell)$, $\gamma(x) = \prod_{k=1}^\ell \alpha_{i_k}^\vee(b_k)^{-1}$.  For an appropriate $\j$, we have that $x_{\j}^{-1} \eta^{-1}(x)$ is related to $(b_1,\ldots,b_\ell)$ by an invertible monomial transformation.
\end{proof}
In the notation we use, this is a special case of \cite[Theorem 4.1.20]{Chh}.  It is also a slight variant of \cite[Claim 7.12]{BK2} or \cite[(6.3)]{BZ}.

\subsection{The positive decorated geometric crystal}
We equip $\X$ with the toric chart $T \times S \to \X$, $(t,s) \mapsto t \cdot \theta(s)$ where $\theta:S \to X$ is a toric chart in $\theta_X$.  Fixing $t \in T$ gives toric charts on each of the highest weight geometric crystals $X_t$.  In particular, on each $X_t$ we have a canonical form $\omega_t$, and a totally positive part $(X_t)_{> 0} = t \cdot X_{>0}$.  Note that if $t$ itself is not totally positive, $t \cdot X_{>0}$ will not be totally positive in the usual sense.

Suppose $\Y =  (Y,\gamma,\ph_i,\ep_i,e_i,\f)$ is a decorated geometric crystal and $\Theta$ is a positive structure on the variety $Y$.  Then $(\Y,\Theta)$ is a positive decorated geometric crystal if
\begin{enumerate}
\item
the morphism $\gamma:Y \to T$ is a morphism\footnote{The definition of a morphism of positive varieties in \cite{BK2} has some subtleties, and we do not introduce the full definitions here.} of positive varieties $(Y,\Theta) \to (T, \Theta_T)$;
\item
the functions $\ph_i,\ep_i, \f$ are $\Theta$-positive;
\item
the map $e_i: \C^* \times Y \to Y$ is a morphism of positive varieties $(\C^* \times Y,\Theta_{\C^*} \times \Theta) \to (Y, \Theta_Y)$.
\end{enumerate}

\begin{theorem}\label{thm:posdec}
The decorated highest weight crystal $X_t$, equipped with the positive structure $\Theta_t$ is a positive decorated geometric crystal.
\end{theorem}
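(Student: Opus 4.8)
The plan is to check the three requirements of a positive decorated geometric crystal one at a time, working throughout in the toric chart on $\X$ given by $T \times S \to \X$, $(t,s) \mapsto t\cdot\eta(x_\i(s))$ for a fixed reduced word $\i$ of $w_0$. Because any two of the charts $x_\i$ (transported to $X$ through $\eta$) are positively equivalent by Proposition \ref{prop:xx} and $\eta$ is a positive isomorphism (Proposition \ref{prop:eta}), positivity of a given map can be detected in whichever chart is most convenient, and I would choose a different reduced word for different indices $i$. Keeping $t$ as a torus coordinate on $\X$ (rather than a fixed scalar) makes the maps genuinely positive; the structure $\Theta_t$ on each fiber $X_t$, and hence the three conditions for $X_t$, are then obtained by restriction. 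Note that $e_i$ preserves $X_t$ by Theorem \ref{thm:geomcrystal}, so only positivity remains to be shown.

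For condition (1), Proposition \ref{prop:gamma} gives $\gamma(\eta(x_\i(a)))=\prod_k \beta_k^\vee(a_k)$, so in the chart above $\gamma$ is the monomial map $(t,a)\mapsto t\cdot\prod_k\beta_k^\vee(a_k)$ between tori, which is positive. For condition (2) I first note that $\ep_i=\ph_i\cdot(\alpha_i\circ\gamma)$, and $\alpha_i\circ\gamma$ is already positive, so it is enough to treat $\ph_i=\chi_i^-$; choosing $\i$ with $i_1=i$, a rank-one computation in $\phi_i(SL_2)$ via the projection $\pr_i$ shows that $\chi_i^-$ is subtraction-free in these Lusztig coordinates (in the $SL_2$ model one finds $\ph_i(x_{-i}(c_1))=c_1$), and positive equivalence transports this to every chart.

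The decoration requires a little more. Writing $x=\eta(u)=u\bar w_0\tau(u)$ with $u=x_\i(a)$, the defining formula gives $\f(x)=\chi(u)+\chi(\tau(u))$. Since each $\chi_i$ is an additive character of $U$, the first summand is $\chi(x_\i(a))=\sum_k a_k$, which is positive. The second summand is positive because the twist factor $\tau$ is itself a positive morphism --- this follows from Proposition \ref{prop:eta} together with Berenstein--Zelevinsky's analysis of the twist map \cite{BZ}, and is visible in the $SL_3$ example, where the entries of $\tau(u)$ are subtraction-free in $(a,b,c)$. Thus $\f$ is a sum of two positive functions.

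The main obstacle is condition (3): the defining formula
$$
e_i^c(x)=x_i\!\left(\frac{c-1}{\ph_i(x)}\right)\,x\,x_i\!\left(\frac{c^{-1}-1}{\ep_i(x)}\right)
$$
is not subtraction-free, so positivity is not visible directly. The way around this is to pass to a chart adapted to $i$: as recorded in the sketch proof of Theorem \ref{thm:geomcrystal}, when $\i$ is chosen so that $i$ sits in the distinguished position, $\eta^{-1}\circ e_i^c\circ\eta$ acts on $x_\i(a_1,\dots,a_\ell)$ by rescaling a single coordinate by $c$, a monomial (hence positive) map of $\C^*\times S$. The apparent subtractions cancel, exactly as in the rank-one identity $e_i^c(x_{-i}(c_1))=x_{-i}(c^{-1}c_1)$, which one also checks against $\gamma(e_i^c x)=\alpha_i^\vee(c)\gamma(x)$ and $\ph_i(e_i^c x)=c^{-1}\ph_i(x)$. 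Since positivity is detected in a single chart of the equivalence class, condition (3) follows. Finally, fixing $t$ restricts all three positive maps to the fiber $X_t$ with its structure $\Theta_t$, completing the proof.
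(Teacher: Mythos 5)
Your proposal is correct, and it is in fact more complete in structure than the paper's own justification, which consists of two sentences following the theorem: the maps $\ph_i,\ep_i,\f$ can be expressed in terms of the generalized minors of \cite{BZ,FZ}, and the theorem ``essentially follows'' from the subtraction-free expressions for those minors in the Lusztig parameters of $x_\i(a_1,\ldots,a_\ell)$. Where the paper treats the three decoration-type functions uniformly through minors and is silent about conditions (1) and (3), you verify each axiom separately: $\gamma$ via the monomial formula of Proposition \ref{prop:gamma}, $\ph_i$ by a direct coordinate computation, $\f$ by splitting it as $\chi(u)+\chi(\tau(u))$ and invoking positivity of the twist map from \cite{BZ}, and---most substantively---the crystal action $e_i$ via the adapted chart in which $\eta^{-1}\circ e_i^c\circ\eta$ rescales a single Lusztig coordinate. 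This last point is genuine added content: the defining formula for $e_i^c$ involves $c-1$ and $c^{-1}-1$ and so is \emph{not} subtraction-free on its face, and the paper's remark about minors does not address it (the rescaling fact appears in the paper only inside the sketch proof of Theorem \ref{thm:geomcrystal}, in connection with volume preservation). Both arguments ultimately lean on Berenstein--Zelevinsky for the hard subtraction-free input---the paper through generalized minors, you through the twist map $\tau$---so the level of rigor is comparable; your version buys an explicit account of why the crystal action itself is a positive morphism, while the paper's buys brevity and a single uniform citation. One small caution: your reduction of $\ph_i=\chi_i^-$ to a rank-one computation should acknowledge that letters $i_j\neq i$ contribute interleaved torus factors, so that $\chi_i^-(x_{-\i}(c_1,\ldots,c_\ell))$ is a sum of Laurent monomials indexed by the positions $j$ with $i_j=i$ rather than literally the rank-one answer; the conclusion (a positive Laurent polynomial, in every chart, with no need to put $i$ first) is unaffected.
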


Each of the maps $\ph_i,\ep_i,\f$ can be expressed in terms of the generalized minors of \cite{BZ,FZ}.  Theorem \ref{thm:posdec} essentially follows from the subtraction-free rational expressions (see \cite{BZ,FZ}) for these minors in terms of the parameters in $(a_1,\ldots,a_\ell)$ of $x_\i(a_1,\ldots,a_\ell)$.

For positive decorated geometric crystals, Berenstein and Kazhdan define a tropicalization functor which produces a (Kashiwara) combinatorial crystal.  

\subsection{Combinatorial crystals from geometric crystals}\label{sec:combcrystal}
In this section, we assume the reader is familiar with Kashiwara's crystals; see \cite{BK2} for further details.  We do not describe Berenstein and Kazhdan's tropicalization procedure formally, but expain how it works in the case $G = GL_4$.  Strictly speaking we should be using the semisimple group $SL_4$, but the coordinates are easier to write for $GL_4$.  We fix a reduced word $\i = (3,2,1,3,2,3)$ of $w_0$, and consider the toric chart $\theta: T \times (\C^*)^6 \to \X$ given by 
$$
(t_1,t_2,t_3,t_4) \times (a,b,c,d,e,f) \longmapsto x_{\i}(a,b,c,d,e,f) t u' = x
$$
where $t = \diag(t_1,t_2,t_3,t_4)$, and $u'$ is chosen so that the $x_{\i}(a,b,c,d,e,f) t w_0 u'$ lies in $B_-$.  We get that
$$
u' = \left(
\begin{array}{cccc}
 1 & \frac{t_3}{f t_4} & \frac{t_2}{d e t_4} & \frac{t_1}{a b c t_4} \\
 0 & 1 & \frac{(d+f) t_2}{d e t_3} & \frac{(a+d+f) t_1}{a b c t_3} \\
 0 & 0 & 1 & \frac{(d e+a b+ae) t_1}{a b c t_2} \\
 0 & 0 & 0 & 1 \\
\end{array}
\right) \qquad x = \left(
\begin{array}{cccc}
 c e f t_4 & 0 & 0 & 0 \\
 (e f+b d+bf) t_4 & \frac{b d t_3}{f} & 0 & 0 \\
 (a+d+f) t_4 & \frac{(a+d) t_3}{f} & \frac{a t_2}{d e} & 0 \\
 t_4 & \frac{t_3}{f} & \frac{t_2}{d e} & \frac{t_1}{a b c} \\
\end{array}
\right)
$$
so that
$$
\f(x) = a+b+c+d+e+f + \frac{t_3}{f t_4} + \frac{(d+f) t_2}{d e t_3} + \frac{(d e+a b+ae) t_1}{a b c t_2}.
$$
(See Section \ref{sec:gelfand} for a general formula for this.)  Let $A,B,C,D,E,F,T_1,T_2,T_3,T_4$ be the ``tropicalizations'' of the ten variables.  Fix integers $T_1,T_2,T_3,T_4 \in \Z^4$, representing a highest weight vector.  Our first step is to consider the integer 6-tuples $A,B,C,D,E,F$ satisfying $\trop(\f) \geq 0$.  Here the tropicalization $\trop(\f)$ is obtained by the the substitution $(+,\times,\div) \mapsto (\min,+,-)$ and changing variables to their tropicalizations.  We get
\begin{align*}
\trop(\f) &= \min(A,B,C,D,E,F, T_3-F-T_4,T_2-E-T_3,T_2+F-D-E-T_3,
\\
&T_1-C-T_2,T_1+E-B-C-T_2,T_1+D+E-A-B-C-T_2) \geq 0.
\end{align*}
Thus the underlying set of the combinatorial crystal is the set of $(A,B,C,D,E,F) \in \Z^6$ satisfying the above inequality.
These variables can be arranged into a Gelfand-Tsetlin pattern
$$
\begin{array}{ccccccc}T_1 && T_2 &&T_3 &&T_4  \\
 & T_1-C && T_2-E && T_3-F& \\
 && T_1-C-B && T_2-D-E && \\
&&& T_1-A-B-C &&&
 \end{array}
$$
with the usual inequalities.  The weight of such a pattern is $\trop(\gamma)$ where
$$
\gamma = \left(c e f t_4,\frac{b d t_3}{f},  \frac{a t_2}{d e} , \frac{t_1}{a b c}\right),
$$
agreeing with the usual weight.   Let us also calculate a single crystal operation.  We have $\ph_2(x) = (a+d)/bd$ and $\ep_2(x) = (de(a + d) t_3)/(a f t_2)$ and 
$$
e^p_2(x) = \left(
\begin{array}{cccc}
 c e f t_4 & 0 & 0 & 0 \\
 \frac{(d (e f+b dp+bfp)+a (e f+b f+bd p)) t_4}{a+d} & \frac{b d p t_3}{f} & 0 & 0 \\
 (a+d+f) t_4 & \frac{(a+d) t_3}{f} & \frac{a t_2}{d e p} & 0 \\
 t_4 & \frac{t_3}{f} & \frac{a t_2+d p t_2}{e p d^2+a e p d} & \frac{t_1}{a b c} \\
\end{array}
\right).
$$
Solving gives that the coordinates of $e^p_2(x)$ are given by
$$
a'= (a^2 + a d)/(a + d p), b'=(a b + b d p)/(a + d), c'=c, d' =(d(a + d) p)/(a + d p), e'=e,f'=f.
$$
To obtain the combinatorial crystal action, we tropicalize this, and set $P = 1$.  Thus, for example the new value $D'$ of $D$ is equal to 
$$
D' = \min(A+D+1,2D+1)-\min(A,D+1).
$$

\section{Whittaker functions and Whittaker modules}\label{sec:whittaker}
This section gives a condensed introduction to the quantum Toda lattice, partly following the approach of Etingof \cite{Eti}.  We will return to a discussion of the classical Toda lattice in Section \ref{sec:classicaltoda}.
\subsection{Quantum Toda lattice}
For our purposes, a Whittaker function $\psi \in C^\infty(T)$ will be a smooth function on $T$ which is an eigenvector of the quantum Toda lattice, a system of commuting differential operators, the first one being:
$$
H =\frac{1}{2} \Delta - \sum_{i \in I}\alpha_i(t)
$$
where $\Delta$ is the Laplacian associated to the $W$-invariant inner product on $\h$.  Explicitly, $\Delta = \sum_i (\frac{\partial}{\partial h_i})^2$ where $h_i$ are an orthonormal basis of $\h$.  To be more precise, for $X \in \h$, the differential operator $\frac{\partial}{\partial X}$ acts on $C^\infty(T)$ as
$$
\frac{\partial f}{\partial X} (t) = \left.\frac{d}{da}f(t\exp(aX))\right|_{a=0}
$$

\begin{theorem}\label{thm:Toda}
There exists a unique set of differential operators $H_1,H_2,\ldots,H_{r}$ on $T$, called the quantum Toda lattice, such that
\begin{enumerate}
\item
the operators $H_i$ commute;
\item
we have $H_1 = H$; and
\item
the symbols of $H_i$ are the fundamental $W$-invariants of $\Sym(\h)$.
\end{enumerate}
\end{theorem}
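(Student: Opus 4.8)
The plan is to realise the operators $H_i$ as the images of the centre $Z(\g)$ of the enveloping algebra $U(\g)$ under the Whittaker reduction to $T$, and to read off properties (1)--(3) from standard structural facts about $Z(\g)$; uniqueness is then a separate rigidity statement about the centraliser of $H$.

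First I would recall the Harish-Chandra isomorphism $Z(\g)\xrightarrow{\sim}\Sym(\h)^W$, so that $Z(\g)=\C[z_1,\dots,z_r]$ is a polynomial algebra on $r=\dim\h$ generators, which may be chosen with $z_1=\Omega$ the quadratic Casimir and with the $z_i$ corresponding to the fundamental $W$-invariants $p_1,\dots,p_r$ of $\Sym(\h)$. The reduction homomorphism $\Phi\colon Z(\g)\to\D(T)$ into the algebra $\D(T)$ of differential operators on $T$ is defined using the open dense Gauss cell $U_-TU\subset G$: a function $\psi$ on $T$ lifts to the Whittaker function $W_\psi(u_-tu)=\chi^-(u_-)\chi(u)\psi(t)$, every $z\in Z(\g)$ acts on $W_\psi$ by a bi-invariant differential operator preserving this transformation law, and the induced operator on $\psi$ is $\Phi(z)$. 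Setting $H_i:=\Phi(z_i)$, property (1) is immediate: $\Phi$ is an algebra map and $Z(\g)$ is commutative.

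For (2) I would compute $\Phi(\Omega)$ directly. Writing $\Omega=\sum_k h_kh_k'+\sum_{\alpha>0}(e_\alpha f_\alpha+f_\alpha e_\alpha)$ in terms of dual bases of $\h$ and root vectors normalised by the invariant form, the root-vector terms act on $W_\psi$ through the characters $\chi,\chi^-$; only the simple roots survive (since $\chi$ is supported there), and they contribute exactly the Toda potential $\sum_{i\in I}\alpha_i(t)$, while the Cartan part gives $\tfrac12\Delta$ after the standard conjugation and $\rho$-shift (and up to an additive constant). This yields $\Phi(\Omega)=H$, so $H_1=H$. Property (3) follows by passing to principal symbols: the associated graded of $\Phi$ is the classical Chevalley restriction $\Sym(\g)^\g\to\Sym(\h)^W$, whence $\sigma(H_i)=p_i$ is $t$-independent (in particular $\sigma(H_1)=\tfrac12\ip{\xi,\xi}$).

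The hard part is uniqueness, for which I would establish that the centraliser $\mathcal C$ of $H$ in $\D(T)$ is exactly $\C[H_1,\dots,H_r]$. The first, clean, step is that for any $D\in\mathcal C$ the principal symbol $\sigma(D)$ is $t$-independent: commuting with $H$ forces $\{\sigma(D),\tfrac12\ip{\xi,\xi}\}=0$, i.e.\ invariance under the translation flow $t\mapsto t\exp(s\xi)$, and since $\sigma(D)$ is a Laurent polynomial in $t$ each mode $t^\mu$ with $\mu\neq0$ is killed, leaving only the $t$-independent mode. The genuine obstacle is to show this symbol is moreover $W$-invariant, hence a polynomial in the $p_j$: this is where the exponential potential $\sum_{i\in I}\alpha_i(t)$ enters, through the subprincipal order of the identity $[D,H]=0$, whose solvability forces the reflection symmetries; this is the one step requiring real work, being essentially the classical integrability of the Toda system. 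Granting it, a descending induction on order subtracts off the appropriate $Q(H_1,\dots,H_r)$ and shows $\mathcal C=\C[H_1,\dots,H_r]$ with $\mathrm{gr}\,\mathcal C\cong\Sym(\h)^W$. Finally, a second family $H_i'$ satisfying (1)--(3) lies in $\mathcal C$ and has $\sigma(H_i')=p_i$, so $D_i:=H_i-H_i'$ has strictly smaller symbol; comparing symbols via the algebraic independence of the $p_j$ reduces $D_i$ to a constant, which the evident normalisation of additive constants forces to vanish, giving $H_i=H_i'$.
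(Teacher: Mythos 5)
The existence half of your proposal is the paper's own argument. The paper constructs the $H_i$ precisely as the image of $Z(\g)$ under Whittaker (Kazhdan--Kostant) reduction: Proposition \ref{P:Toda} produces, for each $z\in Z(\g)$, a differential operator $\D_z$ on $T$ via matrix coefficients $t\mapsto \rho^{-1}(t)\ip{v_-,t^{-1}\cdot v_+}$ of dual Whittaker vectors; commutativity is inherited from the commutativity of $Z(\g)$; the Casimir computation with the $\rho$-shift gives $H$; and the symbol statement is recorded as Theorem \ref{thm:Toda2}. Up to replacing the cell $UTw_0U$ by the Gauss cell $U_-TU$, your $\Phi$ is the paper's $\kappa_q$, and your computation of $\Phi(\Omega)$ is the computation in the proof of Proposition \ref{P:Toda}.

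The gap is in uniqueness, and it sits exactly where you say you are ``granting'' something. Reducing uniqueness to the claim that the centralizer $\mathcal C$ of $H$ in $\D(T)$ equals $\C[H_1,\dots,H_r]$ is a sound plan, and your first step is fine: $[D,H]=0$ forces $\{\sigma(D),\tfrac12\ip{\xi,\xi}\}=0$, and killing the nonzero Fourier modes $t^\mu$ shows $\sigma(D)$ is $t$-independent. But the assertion that $\sigma(D)$ is moreover $W$-invariant is the entire content of the rigidity statement; invoking ``solvability of the subprincipal order'' of $[D,H]=0$ names the place where the work happens without doing it. One needs a genuine induction on the order of $D$, analyzing the inhomogeneous equations that the potential $\sum_{i\in I}\alpha_i(t)$ imposes order by order and showing they are obstructed unless the reflection symmetries hold; this is Kostant's theorem. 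Note that the paper does not reprove it either: it explicitly proves only the existence part (``The existence part of Theorem \ref{thm:Toda} is explained in Proposition \ref{P:Toda} below'') and attributes the construction to Kazhdan--Kostant. So your proposal, read as a proof of the stated theorem, establishes existence in the same way the paper does and leaves uniqueness unproved.

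A smaller but real defect is in your closing step: the ``evident normalisation of additive constants'' does not exist in the statement. Conditions (1)--(3) are insensitive to replacing $H_i$ by $H_i+c_i$ for $i\geq 2$, since constants commute with everything and do not alter principal symbols. So even granting the centralizer claim, your argument pins down each $H_i$ only up to an additive constant, and the uniqueness assertion must be understood accordingly --- as uniqueness of the commutative subalgebra $\C[H_1,\dots,H_r]$, or of the $H_i$ modulo constants (for $H_1$ the constant is fixed by condition (2)).
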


Kazhdan and Kostant constructed this commuting set of differential operators by quantum Hamiltonian reduction from the action of $Z(\g)$ on $C^\infty(G)$.  The existence part of Theorem \ref{thm:Toda} is explained in Proposition \ref{P:Toda} below.  For $G = SL_n$, the quantum Toda lattice are essentially the coefficients of the characteristic polynomial of the following matrix:
\begin{equation}\label{eq:jacobi}
 \left(\begin{array}{ccccc}
\frac{\partial}{\partial h_1}&e^{h_2-h_1}&&\vphantom{\ddots} \\
1&\frac{\partial}{\partial h_2}&e^{h_3-h_2}&\vphantom{\ddots} \\
&1&\frac{\partial}{\partial h_3}&\ddots \\
&&\ddots&\ddots&e^{h_n-h_{n-1}} \\
&&&1&\frac{\partial}{\partial h_n}
\end{array}\right)
\end{equation}
where $h_i$ are standard coordinates on $\h$, satisfying $\sum_i h_i = 0$.  The characteristic polynomial is well-defined even though the matrix has non-commutative entries.

\subsection{Center and Harish-Chandra homomorphism}  
The reader is referred to \cite{Hum} for further details of the material in this section.

For each $\alpha \in R^+$ pick weight vectors $e_\alpha,f_\alpha,h_\alpha \in \g$ satisfying $[e_\alpha,f_\alpha] = h_\alpha$.  As usual we write $e_i = e_{\alpha_i}$ and so on.  We assume that $x_i(a) = \exp(ae_i)$, $y_i(a) =  \exp(af_i)$ and $\alpha_i^\vee(a) = \exp(ah_i)$.

We recall some standard facts concerning the center $Z(\g) \subset U(\g)$ of the universal enveloping algebra.  We have a decomposition $U(\g) = U(\h) \oplus (\n_-U(\g)+ U(\g)\n_+)$.  This gives a projection map $U(\g) \to U(\h)$.  Restricted to $Z(\g)$, this projection is an algebra homomorphism $HC': Z(\g) \to U(\h)$.  We may identify $U(\h)$ with the ring of polynomial functions $\O(\h^*)$ on $\h^*$.  Composing $HC'$ with the algebra automorphism $p(\lambda) \mapsto p(\lambda-\rho)$ induces the Harish-Chandra isomorphism
$$
HC: Z(\g) \longmapsto \O(\h^*)^W,
$$
where $\rho = \frac{1}{2} \sum_{\alpha \in R^+} \alpha$ is the half-sum of positive roots.

Suppose that $z \in Z(\g)$.  Note that if we write $z = x + y$ where $x \in \n_- U(\g)$ and $y \in U(\b_+)$, then automatically $y = HC'(z)$.  This follows easily from the fact that $Z(\g)$ lies in the centralizer of $\h$ in $U(\g)$, and thus every element of $Z(\g)$ has weight 0.

Now let $\xi_\la: Z(\g) \to \C$ be the central character of $Z(\g)$ by which $Z(\g)$ acts on the highest weight irreducible representation of $\g$ with highest weight $\la \in \h^*$.  The Harish-Chandra isomorphism satisfies
$$
\xi_\la(z) = HC(z)(\la+\rho) \qquad \mbox{for all $z \in Z(\g)$.}
$$
Note that $\xi_\la = \xi_{w \cdot \la}$ where as usual the dotted action is given by $w \cdot \la = w(\la+\rho)-\rho$.  We also define $\rho^\vee = \frac{1}{2} \sum_{\alpha \in R^+} \alpha^\vee$.

If $V$ is a $\g$-module then so is $V^*$ under the action $(X \cdot v^*)(v) = v^*(-X \cdot v)$ for $v \in V, v^* \in V^*$.  Note that if $V_\la$ is finite-dimensional then $(V_\la)^* \simeq V_{-w_0 \la}$.  It follows that if $V$ has central character $\xi_\la$, then $V^*$ has central character $\xi_{-w_0\la}$.

Let $(\cdot,\cdot)$ be the Killing form of $\g$ which restricts to a nondegenerate symmetric bilinear form on $\h$.  Using $(\cdot,\cdot)$ we can identify $\h$ with $\h^*$, and we also use $(\cdot,\cdot)$ to denote the corresponding form on $\h^*$.  Define the \defn{Casimir element} $C \in Z(\g) \subset U(\g)$ by
$$
C = \sum_i a_i b_i 
$$
where $\{a_i\}$ and $\{b_i\}$ are dual bases with respect to the nondegenerate symmetric bilinear form $(\cdot,\cdot)$.  It can be expressed in terms of Chevalley generators as
$$
C = \sum_{i=1}^r h_i^2 + 2\sum_{\alpha \in R_+}f_\alpha e_\alpha + 2h_{\rho^\vee}
$$
where $h_i$ is an orthonormal basis of $\h$ and $2h_{\rho^\vee} = \sum_{\alpha \in R^+} h_{\alpha}$.
%where we have used that $\rho^\vee$ satisfies $\ip{\alpha_i, \rho^\vee} = 1$.  
To obtain this formula, we used the commutation relation $[e_\alpha,f_\alpha] = \alpha^\vee$. 
Note that $HC'(C) = \sum_{i=1}^r h_i^2 + 2h_{\rho^\vee}$ which can be identified with the function $\la \mapsto (\la,\la+2\rho)$ on $\h^*$.  Thus we have 
\begin{equation}\label{eq:HCC}
HC(C) (\la) = (\la,\la) -(\rho,\rho).
\end{equation}

\subsection{Whittaker modules}

Let $V$ be a $(\g,T)$-module.  That is, $V$ is a complex vector space with compatible actions of $U(\g)$ and of $T$: if $X \in \h$, we have 
\begin{equation}\label{eq:gT1}
X\cdot v =  \left.\frac{d}{ds}\exp(aX) \cdot v\right|_{a=0}
\end{equation}
and for $X \in \g$ we have
$$
t\cdot X \cdot t^{-1} \cdot v = \ad(t)(X) \cdot v
$$
where $\ad(t):\g \to \g$ is the adjoint action of $T$ on $\g$.  We will mostly use the compatibility of the $\g$ and $T$ actions in a formal way, ignoring topological considerations.

%$t = \exp(X)$ then $t.v = \exp(X).v = v +X.v +\frac{X^2}{2}.v + \cdots$.
%\footnote{The change in sign is to match a convention later on, and is not significant.} 

A vector $v \in V$ is a \defn{$\n_+$-Whittaker vector} (resp. \defn{$\n_-$-Whittaker vector}) if $e_i \cdot v = -v$ (resp. $f_i \cdot v = -v$ ) for $i \in I$.  If $V_\pm$ are two $(\g,T)$-modules, we say that a bilinear pairing $\ip{\cdot,\cdot}: V_- \times V_+ \to \C$ is $\g$-invariant if we have
$$
\ip{X\cdot v, w} + \ip{v, X \cdot w} = 0
$$
for all $X \in \g$.  

\begin{prop}\label{P:Toda}
Let $\ip{.,.}: V_- \times V_+ \to \C$ be a $\g$-invariant bilinear pairing.  Suppose $v_+ \in V_+$ and $v_- \in V_-$ are $\n_+$- and $\n_-$-Whittaker vectors respectively.  Suppose that $V_+$ has central character $\xi$.  Then the function $\psi \in C^\infty(T)$ given by
$$
\psi: t \longmapsto \rho^{-1}(t)\ip{v_-, t^{-1} \cdot v_+}
$$
is a Whittaker function.
\end{prop}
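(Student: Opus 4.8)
The plan is to use the Kazhdan--Kostant reduction mechanism: for each central element $z \in Z(\g)$ I compute the scalar $\ip{v_-, t^{-1}\cdot z\cdot v_+}$ in two ways. On one hand, since $V_+$ has central character $\xi$, we have $z\cdot v_+ = \xi(z)v_+$, so $\ip{v_-, t^{-1}zv_+} = \xi(z)W(t)$, where $W(t) := \ip{v_-,t^{-1}v_+}$ and $\psi = \rho^{-1}W$. On the other hand, expanding $z$ in $U(\g)$ and pushing its raising and lowering factors onto the Whittaker vectors rewrites the same quantity as $L_z W$ for an explicit differential operator $L_z$ on $T$. Matching the two yields the eigenvalue equation $L_z W = \xi(z)W$; conjugating by $\rho$ will turn the family $\{L_z\}$ into the quantum Toda lattice and exhibit $\psi$ as a simultaneous eigenfunction.

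First I would treat the Casimir $z = C$, which is the model case and produces $H$. Using the compatibility \eqref{eq:gT1} between the $\h$- and $T$-actions, an element $X \in \h$ converts to the derivative $\partial/\partial X$: differentiating $W(t\exp(aX))$ gives $\partial W/\partial X = -\ip{v_-, X t^{-1}v_+}$, and iterating yields $\sum_i \ip{v_-, h_i^2 t^{-1}v_+} = \Delta W$. In the expansion $C = \sum_i h_i^2 + 2\sum_{\alpha\in R^+} f_\alpha e_\alpha + 2h_{\rho^\vee}$, the clean point is that the condition $e_i v_+ = -v_+$ on simple roots forces $e_\alpha v_+ = 0$ for every non-simple positive root $\alpha$: the assignment $e_i \mapsto -1$ extends to a Lie algebra character of $\n_+$, which necessarily annihilates $[\n_+,\n_+]$, and an easy induction on height shows $v_+$ realizes this character. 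Hence only the simple summands $f_{\alpha_i}e_{\alpha_i}$ survive; conjugating $t^{-1}$ past $e_{\alpha_i}$ produces a factor $\alpha_i(t)$, and $e_{\alpha_i}v_+ = -v_+$ together with $\g$-invariance and $f_{\alpha_i}v_- = -v_-$ collapse each term to $-\alpha_i(t)W$. Collecting terms gives $\frac12\Delta W - \partial_{h_{\rho^\vee}}W - \sum_i\alpha_i(t)W = \tfrac12\xi(C)W$, and conjugating by $\rho$ (converting $\tfrac12\Delta - \partial_{h_{\rho^\vee}}$ into $\tfrac12\Delta$ up to an additive constant, the first-order drift generated by conjugating $\Delta$ cancelling the $h_{\rho^\vee}$ contribution, while the multiplication operator $\sum_i\alpha_i(t)$ is unchanged) shows $H\psi = c\,\psi$ for a constant $c$ determined by $\xi(C)$ and $(\rho,\rho)$.

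For the full quantum Toda lattice I would run the same two-sided computation for arbitrary $z \in Z(\g)$: put $z$ in PBW normal form with $\n_-$-factors on the left and $\n_+$-factors on the right, move $t^{-1}$ through, picking up character factors $\alpha(t)$, replace interior $\h$-elements by derivatives via \eqref{eq:gT1}, and evaluate the boundary factors using $e_\alpha v_+ = 0$ (resp. $f_\alpha v_- = 0$) for non-simple $\alpha$ together with $e_i v_+ = -v_+$ and $f_i v_- = -v_-$. This realizes $z \mapsto \rho^{-1}L_z\rho =: \tilde H_z$ as the quantum Hamiltonian reduction map. The facts I would invoke are that this map is a well-defined algebra homomorphism $Z(\g) \to \D(T)$, so its image is a commuting family of operators independent of the choice of $V_\pm$ and $v_\pm$, and that, via the Harish-Chandra isomorphism, the symbol of $\tilde H_z$ is the corresponding $W$-invariant in $\Sym(\h)$. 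Letting $z$ range over preimages of the fundamental invariants then yields operators satisfying conditions (1)--(3) of Theorem \ref{thm:Toda} with $\tilde H_C = H$; by the uniqueness in that theorem this family is exactly the quantum Toda lattice. Since $\tilde H_z\psi = \xi(z)\psi$ by construction, $\psi$ is a simultaneous eigenfunction, hence a Whittaker function.

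The main obstacle is not any single computation but establishing that the reduction $z \mapsto L_z$ is genuinely well-defined and multiplicative: a priori the normal-ordering recipe could depend on the chosen PBW ordering, and one must verify that the resulting operator depends only on $z$ and that products go to products, which is precisely the content of quantum Hamiltonian reduction. A secondary point requiring care is the bookkeeping of the $\rho$-conjugation, which is responsible both for removing the first-order drift term from the Casimir and for the $\rho$-shifted infinitesimal character appearing in Theorem \ref{thm:mainintro}. Analytic issues --- convergence and the precise sense in which the $(\g,T)$-module actions may be differentiated --- I would treat formally, as the surrounding text does.
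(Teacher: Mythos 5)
Your proposal is correct and takes essentially the same route as the paper's proof: evaluate $\ip{v_-, t^{-1}\cdot C\cdot v_+}$ in two ways using the explicit expansion of the Casimir, exploit $e_\alpha \cdot v_+ = 0$ for non-simple $\alpha$ together with $\g$-invariance and the conjugation $t^{-1}f_{\alpha_i}t = \alpha_i(t)f_{\alpha_i}$, conjugate by $\rho$ to eliminate the first-order drift term, and then repeat the argument for arbitrary $z \in Z(\g)$ to obtain the full commuting family, commutativity coming from that of $Z(\g)$. The only places you go beyond the paper's text are improvements rather than deviations: your height-induction showing that $e_i\cdot v_+ = -v_+$ forces $e_\alpha\cdot v_+ = 0$ for non-simple $\alpha$ (which the paper uses without justification), and your explicit flagging that well-definedness and multiplicativity of $z \mapsto \D_z$ is the content of Kazhdan--Kostant quantum Hamiltonian reduction (which the paper likewise invokes only implicitly).
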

\begin{proof}
Let $\psi'(t) = \ip{v_-, t^{-1} \cdot v_+}$.  Let $C$ be the Casimir element.  We calculate the  function 
$$
t \longmapsto \ip{v_-, t^{-1} \cdot C \cdot v_+} \in C^\infty(T)
$$
in two ways.  On the one hand, since $V$ has central character $\xi$, we have
$$
\ip{v_-, t^{-1} \cdot C \cdot v_+} = \xi(C) \psi'(t).
$$
On the other hand, noting that for $X \in \h$ we have $\ip{v_-, t^{-1} \cdot X \cdot v_+} =-\frac{\partial \psi'}{\partial X}(t)$ by \eqref{eq:gT1},
\begin{align*}
(C.\psi)(t) &= \ip{v_-, t^{-1} \cdot \left(\sum_{i=1}^r h_i^2 + 2\sum_{\alpha \in R_+} f_\alpha e_\alpha + 2h_{\rho^\vee} \right)
 \cdot v_+} \\
 &=\sum_{i=1}^r \ip{v_-, t^{-1} \cdot h_i \cdot h_i \cdot v_+} - 2\sum_{\alpha \in R_+} \alpha(t) \ip{f_\alpha \cdot v_-, t^{-1} e_\alpha \cdot v_+} + 2\ip{v_-,t^{-1} \cdot h_{\rho^\vee} \cdot v_+} \\
 &= \Delta(\psi')(t)  - 2\sum_{i \in I}\alpha_i(t) \psi'(t)-2\left(\frac{\partial \psi'}{\partial h_{\rho^\vee}}\right)(t) \\
 &= 2\left(H - \frac{\partial }{\partial h_{\rho^\vee}}\right)\psi'(t) 
\end{align*}
where we have used that $e_\alpha \cdot v_+ = 0$ for $\alpha$ not simple, and
$$
t^{-1} f_\alpha t= \ad(t^{-1})(f_\alpha) = \alpha(t)f_\alpha.
$$
Thus $\psi'$ is an eigenfunction of $H - \frac{\partial}{\partial h_{\rho^\vee}}$ with eigenvalue $\frac{1}{2}\xi(C)$.  But a term by term calculation gives the equality of differential operators
$$
(\rho(t))^{-1}\left(H - \frac{\partial}{\partial h_{\rho^\vee}}\right)\rho(t) = H - \frac{1}{2}(\rho^\vee,\rho^\vee).
$$
Thus the function $\psi(t) = \rho(t)^{-1} \psi'(t)$ is an eigenfunction of $H$ with eigenvalue $\frac{1}{2}\xi(C')$ where by \eqref{eq:HCC} $C' = C-(\rho^\vee,\rho^\vee)$ can be succinctly described as the element in $Z(\g)$ satisfying $HC(C')(\la) = (\la,\la)$.

Now the same proof as above shows that for each $z \in Z(\g)$ there is a differential operator $\D_z$ on $T$ so that $\psi$ is an eigenfunction of $\D_z$.  Note that the differential operator $\D_z$ only depends on the central character $\xi$, and not the other choices $(v_{\pm}, V_{\pm})$.  As $z$ varies, this collection of differential operators is the quantum Toda lattice.  They commute because $Z(\g)$ is commutative.
\end{proof}
We call the function of Proposition \ref{P:Toda}, an eigenvector corresponding to central character $\xi$.  Thus a Whittaker function with central character $\xi_{-\rho}$ has eigenvalue $HC(C')(-\rho+\rho) = 0$; that is, it is a solution to the Toda Hamiltonian.

The proof of Proposition \ref{P:Toda} essentially establishes the following reformulation of Theorem \ref{thm:Toda}.  Let $\D(T)$ denote the ring of differential operators on $T$.

\begin{theorem}\label{thm:Toda2}
There is an algebra embedding $\kappa_q: \Sym(\h)^W \to \D(T)$ such that $\kappa_q(\frac{1}{2}(C +(\rho,\rho))) = H$, and the symbol of $\kappa_q(z)$ is equal to $z$.
\end{theorem}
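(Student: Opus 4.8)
The plan is to upgrade the single computation in the proof of Proposition \ref{P:Toda} into an algebra homomorphism, by first observing that the differential operator produced there depends only on $z \in Z(\g)$, and then recognizing the assignment $z \mapsto \D_z$ as a Whittaker (quantum Hamiltonian) reduction, i.e. a radial-part map, which is automatically multiplicative. First I would make the construction intrinsic. Using the triangular decomposition $U(\g) = U(\n_-)\,U(\h)\,U(\n_+)$, write $z = \sum_k F_k H_k E_k$ in PBW order, and let $\chi_+: U(\n_+) \to \C$ and $\chi_-: U(\n_-) \to \C$ be the algebra homomorphisms determined by $e_i \mapsto -1$, $f_i \mapsto -1$ on simple root vectors and $0$ on non-simple root vectors (these are well defined because the abelianization of $\n_\pm$ is spanned by the simple root vectors). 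Running the normal-ordering computation of Proposition \ref{P:Toda} verbatim — push $t^{-1}$ through each $F_k$ via $t^{-1} f_\alpha = \alpha(t) f_\alpha t^{-1}$, evaluate the $\n_\pm$-factors on the Whittaker vectors through $\chi_\pm$, and read each $U(\h)$-factor as a constant-coefficient operator via \eqref{eq:gT1} — produces an operator $\D_z \in \D(T)$ whose coefficients are the characters $\prod_i \alpha_i(t)^{n_i}$ coming from the weights of the $F_k$. Since this uses only the Whittaker conditions and the $(\g,T)$-structure, $\D_z$ depends on $z$ alone. Setting $\kappa_q := \rho(t)^{-1} \D_{HC^{-1}(\,\cdot\,)} \rho(t)$, with the conjugation absorbing the $\rho$-shift exactly as in Proposition \ref{P:Toda}, gives a linear map $\Sym(\h)^W \to \D(T)$.

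The heart of the matter is that $\kappa_q$ is an algebra homomorphism, and I would argue this in the radial-part framework. On the dense open cell $U_- T U \subset G$ one restricts the operator by which $z$ acts on $C^\infty(G)$ to functions that are $(U_-,\chi_-)$-equivariant on the left and $(U,\chi_+)$-equivariant on the right; such functions are determined by, and freely realize, functions on $T$. The map sending an invariant operator to its radial part on $T$ is multiplicative because restriction to this space of equivariant functions intertwines composition, and $\D_z$ is precisely this radial part. Equivalently and more algebraically, $\D_z$ is the operator by which $z$ acts on the Whittaker-reduced module $N = \C_{\chi_-} \otimes_{U(\n_-)} \bigl(U(\g) \otimes_{U(\n_+)} \C_{\chi_+}\bigr)$, after identifying $N$ with a space of functions on $T$; being a genuine action of the ring $Z(\g)$, this is automatically multiplicative. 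Either way, the step I expect to be the main obstacle is verifying that the identification is faithful enough to recover the operator — that is, the compatibility of the reduction with composition including all lower-order terms. A clean substitute, if one prefers, is to note that $\D_{z_1 z_2}$ and $\D_{z_1} \D_{z_2}$ both act as $\xi_\la(z_1)\xi_\la(z_2)$ on the matrix coefficient $\psi'_\la$ of a Whittaker module of central character $\xi_\la$ (Jacquet--Kostant modules built from Verma modules $M_\la$ exist for all $\la$), and that these exponential-polynomial joint eigenfunctions, as $\la$ ranges over $\h^*$, separate $\D(T)$; hence the two operators coincide.

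It remains to check the two stated properties and injectivity, all short. For the symbol, observe that in $z = \sum_k F_k H_k E_k$ the terms with $F_k, E_k \neq 1$ contribute strictly fewer $U(\h)$-factors, hence strictly lower differential-operator order, while the pure term $F_k = E_k = 1$ is $HC'(z)$; since the $\rho$-shift and the conjugation by $\rho(t)$ are lower order, the principal symbol of $\kappa_q(\hat z)$ is the leading part of $HC(z) = \hat z$, i.e. $\hat z$ itself on homogeneous elements. Injectivity then follows at once, since the associated graded map $\hat z \mapsto \mathrm{symbol}$ is the identity. Finally, the identity $\kappa_q\bigl(\tfrac12(C + (\rho,\rho))\bigr) = H$ is literally the computation in the proof of Proposition \ref{P:Toda}: there the normal-ordered operator attached to $C$ is shown to equal $2H - 2\frac{\partial}{\partial h_{\rho^\vee}}$, and conjugation by $\rho(t)$ turns $H - \frac{\partial}{\partial h_{\rho^\vee}}$ into $H - \tfrac12(\rho^\vee,\rho^\vee)$; under the Killing-form identification used there, where $(\rho^\vee,\rho^\vee) = (\rho,\rho)$, the added constant $\tfrac12(\rho,\rho)$ cancels $-\tfrac12(\rho^\vee,\rho^\vee)$, leaving exactly $H$. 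Applying $\kappa_q$ to the fundamental $W$-invariants then recovers the commuting operators $H_i$ of Theorem \ref{thm:Toda}.
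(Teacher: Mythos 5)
Your proposal is correct and takes essentially the same approach as the paper: the paper's proof of this theorem is a one-line appeal to Proposition \ref{P:Toda}, whose argument constructs $\D_z$ by exactly your normal-ordering computation and whose commutativity claim rests on the Kazhdan--Kostant reduction of the $Z(\g)$-action on bi-Whittaker-equivariant functions --- precisely the radial-part multiplicativity argument you spell out, together with the same symbol and Casimir checks. The only slips are cosmetic and harmless: evaluating the $\n_-$-factors against $v_-$ goes through the antipode (with signs as in the paper's Casimir computation), and the Whittaker matrix coefficients are infinite series of exponentials rather than exponential-polynomials, though your separation argument still applies since $\D_{z_1}\D_{z_2}-\D_{z_1z_2}$ has exponential coefficients.
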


\subsection{Principal series representations}
Consider the space $W_\mu$ of holomorphic functions on $B_-U^{w_0}$ satisfying 
$$
f(bu) =\mu(b) f(u) \qquad \mbox{for $b \in B_-$}
$$
Since $B_-U^{w_0}$ is open in $G$, the universal enveloping algebra $U(\g)$ acts on $W_\mu$ in the usual way:
\begin{align*}
(X.f)(g) &= \left.\frac{d}{da} f(g\exp(aX))\right|_{a=0} \\
(t.f)(g) &= f(gt)
\end{align*}

%The minus sign: $(g.h.f)(x) = (g.(h.f))(x) = (h.f)(g^{-1} x) = f(h^{-1}g^{-1} x) = ((gh).f)(x)$.

%(f.h.g)(x) = (f.h)(xg^{-1}) = f(xg^{-1}h^{-1})

\begin{prop}
The space $W_\mu$ has infinitesimal character $\xi_{\mu}$.
\end{prop}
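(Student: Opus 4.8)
The plan is to compute the action of the center $Z(\g)$ on the principal series module $W_\mu$ by reducing to the action on a single, especially simple vector, namely the function supported at the identity coset $B_- \cdot 1$. The claim is that $W_\mu$ has infinitesimal character $\xi_\mu$, i.e. that every $z \in Z(\g)$ acts on $W_\mu$ by the scalar $\xi_\mu(z)$. Since $Z(\g)$ acts on any $(\g,T)$-module by commuting with both the $\g$- and $T$-actions, it suffices to show that $z$ acts by the scalar $\xi_\mu(z)$ on the functions in $W_\mu$; and because the action is $G$-equivariant it is enough to evaluate $(z \cdot f)(1)$ for arbitrary $f \in W_\mu$.

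The key computation uses the decomposition $U(\g) = U(\b_+) \oplus \n_- U(\g)$, equivalently $U(\g) = U(\h) \oplus (\n_- U(\g) + U(\g)\n_+)$, which is exactly what defines the Harish-Chandra projection $HC'$ in the excerpt. First I would observe that for $f \in W_\mu$ the covariance condition $f(bu) = \mu(b) f(u)$ for $b \in B_-$ translates, via the infinitesimal action $(X.f)(g) = \tfrac{d}{da} f(g\exp(aX))|_{a=0}$ on the \emph{left}, into a statement about how $\n_-$ acts: the left-invariance under $B_-$ means that right-translation by elements of $\n_-$, after evaluation at $1$, is controlled by $\mu$. Concretely, writing $z = HC'(z) + r$ with $HC'(z) \in U(\h)$ and $r \in \n_- U(\g) + U(\g)\n_+$, I would show that the $\n_-U(\g)$ part and the $U(\g)\n_+$ part both contribute nothing to $(z\cdot f)(1)$ after accounting for the covariance. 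The surviving term is $HC'(z)$ acting through $\mu$, which evaluates to the scalar $HC'(z)(\mu)$. Composing with the shift $p(\la)\mapsto p(\la-\rho)$ built into the definition of $HC$ yields the eigenvalue $HC(z)(\mu+\rho) = \xi_\mu(z)$, using the identity $\xi_\la(z) = HC(z)(\la+\rho)$ recorded earlier.

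I expect the main obstacle to be bookkeeping the two sides of the covariance correctly: the module structure puts $G$ acting on the right of the argument, while the defining transformation law is imposed on the left by $B_-$, so one must be careful about which of $\n_+$ and $\n_-$ is killed by evaluation at $1$ and which is absorbed into $\mu$. The cleanest way to manage this is to use that any $X \in \n_-$ can be commuted so that, applied to $f$ and evaluated at $1$, it either lands in the regime governed by the $B_-$-covariance (hence produces a $\mu$-factor or vanishes because $\mu$ is a character trivial on $U_-$) or else gets absorbed into the torus part; symmetrically, factors in $\n_+$ sit on the right and can be moved past the evaluation point. Once the vanishing of the $r$-term at $1$ is established, the identification of the eigenvalue with $\xi_\mu(z)$ is immediate from the Harish-Chandra formalism already set up in the excerpt, so no further computation is required.
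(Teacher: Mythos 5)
There is a genuine gap, and it is fatal to the strategy as stated: the identity does not lie in the domain of the functions in $W_\mu$. By definition $W_\mu$ consists of holomorphic functions on $B_-U^{w_0}$, where $U^{w_0} = U \cap B_-w_0B_-$; since Bruhat cells are disjoint, $1 \notin B_-w_0B_-$, hence $1 \notin U^{w_0}$, and because $B_- \cap U = \{1\}$ we also get $1 \notin B_-U^{w_0}$. So the quantity $(z\cdot f)(1)$ you propose to compute is undefined. This is not a removable-singularity issue: restriction to $U^{w_0}$ identifies $W_\mu$ with \emph{all} holomorphic functions on $U^{w_0}$ (for $G = SL_2$ one has $U^{w_0} \simeq \C^*$, so $W_\mu$ contains functions like $e^{1/a}$), and indeed the $\n_-$-Whittaker vector $f_-$ used later in the paper blows up at the boundary. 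Nor can you simply move the evaluation to a point $g_0$ of the domain: away from the identity, right-translation derivatives (the module action) no longer coincide with left-translation derivatives (where the $B_-$-covariance applies), since $(u.f)(g_0) = \bigl(f.(\ad(g_0)u)\bigr)(g_0)$. The paper's proof supplies precisely the idea your argument is missing: because $z \in Z(\g)$ is $\ad(G)$-invariant, $(z.f)(g) = (f.z)(g)$ at \emph{every} $g$ in the domain, so the left-action computation can be run pointwise everywhere instead of at the unavailable identity.

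There is also a second, more local, error. Even at a point where evaluation makes sense, your claim that the $U(\g)\n_+$ part of the remainder contributes nothing is unjustified and in general false: the covariance of $f$ is a \emph{left} $B_-$-condition, while the module action is \emph{right} translation, so nothing kills factors of $\n_+$ sitting on the right. For $\mathfrak{sl}_2$, $ef \in U(\g)\n_+$ and $ef = fe + h$, so it contributes $\mu(h)f$, not $0$. What rescues the computation is the remark the paper makes before introducing $\xi_\la$: since $z$ has weight zero, one may write $z = x + y$ with $x \in \n_-U(\g)$ and $y = HC'(z) \in U(\h)$, i.e.\ the \emph{entire} remainder lies in $\n_-U(\g)$, and only the left covariance (killing leftmost $\n_-$ factors, and evaluating $U(\h)$ through $\mu$) is ever used. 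Your final bookkeeping $HC'(z)(\mu) = HC(z)(\mu+\rho) = \xi_\mu(z)$ is correct and agrees with the paper, but the two issues above, especially the first, mean the proposed proof does not go through.
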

\begin{proof}
Let $z \in Z(\g)$.  Since $z$ is $G$-invariant: $g z g^{-1}=\ad(g)(z) = z$ for all $g \in G$, we have that $z.f = f.z$, where the right action of $U(\g)$ on $W_\mu$ is given by the formula:
\begin{align*}
(f.X)(g) &= \left. \frac{d}{da} f(\exp(aX)\,g)\right|_{a=0} 
\end{align*}
Let us write $z = x+y$ where $x \in \n_- U(\g)$ and $y = HC'(z) \in U(\h)$.  Then $f.x = 0$ so that $f.z = f.y = \mu(y)$, where we think of $\mu$ as an element of $\h^*$.  Thus $W_\mu$ has infinitesimal character $\xi_{\mu}$.
%The statement is local: it depends only on the properties of functions $f \in W_\mu$ in the neighborhood of each point $g \in UTw_0U$.
\end{proof}

\section{Whittaker functions as integrals over geometric crystals}
\label{sec:integral}
\subsection{Definition}
Let $\la: T \to \C^*$ be a character of $T$.  Define the integral function 
\begin{equation}
\label{eq:integral}
\psi_\la(t) = \int_{(X_t)_{> 0} \subset X_t} \la(\gamma(x)) e^{-\f(x)}\omega_t.
\end{equation}

Let $T_{>0} \simeq \R_{>0}^r$ be the totally positive part of $T$.  It is generated by the elements $\{\alpha_i^\vee(a) \mid a \in \R_{>0}\}$.

\begin{theorem}\label{thm:main}
The function $\psi_\la(t)$ is a Whittaker function on $T_{>0}$ with infinitesimal character $\xi_{\la-\rho}$.
\end{theorem}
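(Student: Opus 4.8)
The plan is to realize $\psi_\la(t)$ as a matrix coefficient of exactly the shape appearing in Proposition~\ref{P:Toda} and then invoke that proposition. Set $\mu = \la - \rho$ and take $V_+ = W_\mu$, the principal series realized on the big cell $B_-U^{w_0}$; by the proposition computing its infinitesimal character, $W_\mu$ has central character $\xi_\mu = \xi_{\la-\rho}$, which is precisely the central character Proposition~\ref{P:Toda} attaches to the output Whittaker function. It thus suffices to exhibit a $\g$-invariant pairing $\ip{\cdot,\cdot}\colon V_-\times V_+\to\C$, an $\n_+$-Whittaker vector $v_+\in V_+$, and an $\n_-$-Whittaker vector $v_-\in V_-$ in a dual principal series, such that
$$
\psi_\la(t) = \rho^{-1}(t)\,\ip{v_-, t^{-1}\cdot v_+}.
$$

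First I would pin down $v_+$. On the big cell each $g$ factors uniquely as $g=\pi^-(g)\pi^+(g)$, and I claim the $\n_+$-Whittaker vector is $v_+(g)=\mu(\pi^-(g))\,e^{-\chi(\pi^+(g))}$. This lies in $W_\mu$ since $\pi^-(bg)=b\pi^-(g)$, and because $\pi^+(gx_i(a))=\pi^+(g)x_i(a)$ and $\chi$ is additive one gets $v_+(gx_i(a))=e^{-a}v_+(g)$, whence $e_i\cdot v_+=-v_+$. The vector $v_-$ is the analogous $\n_-$-Whittaker vector of the dual series, and the pairing $\ip{v_-,f}$ is realized as integration of $f$ (pulled through $\bar w_0$) against the canonical top-form over the totally positive cell. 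The point of pairing $V_-$ against $V_+$ is that the product of the two equivariance characters turns the integrand into a genuine translation-invariant density, which is what makes the pairing $\g$-invariant.

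The bulk of the work is the explicit identification of the integrand. Writing $x=u_1t\bar w_0u_2\in X_t$, a short computation gives $\bar w_0^{-1}x\in B_-U$ with $\pi^+(\bar w_0^{-1}x)=u_2$, so $v_+$ contributes the factor $e^{-\chi(u_2)}$ (and a weight factor), while $v_-$ contributes $e^{-\chi(u_1)}$; together these assemble into $e^{-\f(x)}$, since $\f(x)=\chi(u_1)+\chi(u_2)$. The remaining weight bookkeeping is handled by transporting the integral over $(X_t)_{>0}=t\cdot X_{>0}$ to $U^{w_0}_{>0}$ via the twist map $\eta$ (Propositions~\ref{prop:eta} and~\ref{prop:canonicallyequiv}), using $\f(\eta(u))=\chi(u)+\chi(\tau(u))$ and the monomial formula $\gamma(\eta(u))=\prod_k\beta_k^\vee(a_k)$ of Proposition~\ref{prop:gamma}; Proposition~\ref{prop:canonicallyequiv} guarantees $\omega_t$ is carried to the canonical form, so no Jacobian intervenes. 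The $t$-dependence enters through $t^{-1}\cdot v_+$, and I would check that the resulting weight factor combined with the prefactor $\rho^{-1}(t)$ reproduces exactly $\la(\gamma(x))$ in the integrand.

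I expect the main obstacle to be establishing the $\g$-invariance of the pairing rigorously. Formally it follows from integration by parts: $X\in\g$ acts as a vector field whose integral against the invariant density vanishes. But this produces boundary terms on $\partial R_{1,w_0}$, and their vanishing is exactly the analytic content that the weight $e^{-\f}$ (with $\f\to+\infty$ at the boundary of the positive cell) together with the simple-pole structure of $\omega$ along $\partial R_{1,w_0}$ from Lemma~\ref{lem:boundary} is designed to guarantee. Since convergence is deferred to \cite{Chh,Rie2}, I would verify the invariance at the formal level and cite those references for the boundary estimates. Once $\g$-invariance and the two Whittaker conditions are in place, Proposition~\ref{P:Toda} applies verbatim and yields that $\psi_\la$ is a Whittaker function with central character $\xi_{\la-\rho}$, completing the proof.
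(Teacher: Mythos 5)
Your proposal follows the same route as the paper: realize $\psi_\la$ as the matrix coefficient $\rho^{-1}(t)\ip{v_-,t^{-1}\cdot v_+}$ of dual principal series and feed it to Proposition~\ref{P:Toda}; your $v_+$ is exactly the paper's $f_+$, and your treatment of $\g$-invariance of the pairing (formal Stokes argument plus exponential decay, deferring estimates to \cite{Rie2,Chh}) is also the paper's. However, there is a genuine error in the weight bookkeeping, located exactly at the step you postpone as a ``check''. Setting $\mu=\la-\rho$ for $V_+=W_\mu$ forces, via the condition $\mu+\nu=-2\rho$ of Proposition~\ref{prop:pairing}, the choice $\nu=-\la-\rho$ for $V_-=W_\nu$. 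Now the factor $\la(\gamma(x))$ in the integrand does \emph{not} come from the $t$-dependence of $t^{-1}\cdot v_+$ together with the prefactor $\rho^{-1}(t)$: those produce the pure $t$-factors $(\rho+\mu)(t)$ and $(\nu+\rho)(t)$, which cancel identically precisely because $\mu+\nu+2\rho=0$. Instead, the $\gamma$-dependence enters from the two ingredients you treat as routine: the $\n_-$-Whittaker vector itself, $f_-(u)=\nu(\tgamma(u))e^{-\chi(\tau(u))}$, whose prefactor $\nu(\tgamma(u))$ is forced (it is exactly what compensates the $B_-$-equivariance, via Lemma~\ref{lem:easy}), and the density $\rho(\tgamma(u))\,\omega_U$ of the pairing, which is \emph{not} the bare canonical form. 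These combine to produce the integrand weight $(\nu+\rho)(\gamma(x))$. With your choice of $\mu$ this is $(-\la)(\gamma(x))=\la(\gamma(x))^{-1}$, so your matrix coefficient equals $\psi_{-\la}(t)$, not $\psi_\la(t)$, and your displayed identity fails.

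The paper makes the opposite assignment: it puts $\nu=\la-\rho$ on the $\n_-$-side, so that $(\nu+\rho)(\gamma(x))=\la(\gamma(x))$ and the matrix coefficient genuinely equals $\psi_\la(t)$, and accordingly $\mu=-\la-\rho$ on the $\n_+$-side. The price is that Proposition~\ref{P:Toda} then outputs the central character $\xi_\mu=\xi_{-\la-\rho}$, which agrees with the label $\xi_{\la-\rho}$ in the theorem only up to the duality $\la\mapsto -w_0\la$; your arrangement makes the character label transparent but computes the wrong function, while the paper's computes the right function. To repair your argument you must move the $\la$-dependence to the $V_-$ side as the paper does, construct $f_-$ explicitly using the twist map $\tau$ and $\tgamma=\gamma\circ\eta$ (this is real work, not an ``analogous'' construction: it is Lemma~\ref{lem:easy} and the proposition following it), and include the factor $\rho(\tgamma(u))$ in the pairing; the identification of the integrand then proceeds as in the paper's final chain of equalities, with all $t$-prefactors cancelling.
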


The reason to only consider the integral for $t \in T_{>0}$ is to simplify issues of convergence: for $t \in T_{>0}$ the integral becomes a real integral.
%We first remark that we will only discuss the convergence of the integral for $t \in T_{>0} \simeq \R_{>0}^r$.  Thus, strictly speaking, $\psi_\la(t)$ will be a function on $T_{>0}$ (or an open subset of $T$ containing $T_{>0}$), rather than $T$ itself.

This formula is a rather elegant analogue of the formula for the character of an irreducible representation for $G$.  In that case, we have a summation over a crystal instead of an integral over a geometric crystal.  See Section \ref{sec:gelfand} for further discussion.

To prove Theorem \ref{thm:main}, we follow Rietsch \cite{Rie2} and Gerasimov-Kharchev-Lebedev-Oblezin \cite{GKLO} to express this integral as a matrix coefficient of dual Whittaker modules.  Theorem \ref{thm:main} was conjectured by Rietsch \cite{Rie1} without the language of geometric crystals, and she proved it in \cite{Rie2} for the case $\la = 1$.  The proof here is essentially the same as hers.  Theorem \ref{thm:main} was also established by Chhaibi \cite{Chh} using probabilistic methods, though I believe he checked only that it is an eigenfunction of the quantum Toda Hamiltonian, and not of the whole quantum Toda lattice. 

\begin{remark}
The approach of \cite{Rie2,GKLO}, and ours, remain valid for other families of integration cycles $\Gamma_t \subset X_t$ where the integrand has exponential decay in the infinite directions.  In particular, Rietsch \cite{Rie2} identifies a particular family of compact cycles.  Givental \cite{Giv} suggests taking a non-degenerate critical point of the decoration $f$ on $X_t$ and
taking the union of descending gradient trajectories of the function $\Re(f|_{X_t})$
with respect to a suitable Riemannian metric.  However, it does not seem easy to explicitly identify all the possible families of integration cycles.  Nevertheless, it seems reasonable to conjecture that as we vary the integration cycle we obtain {\it all} Whittaker functions with infinitesimal character $\xi_{\la-\rho}$.
\end{remark}

\begin{remark}
Since $\gamma(x) \in T_{> 0} \simeq \R_{> 0}^r$, instead of the factor $\la(\gamma(x))$ we could use the factor $e^{\ip{\log(\gamma(x)),h}}$ for $h \in \C^r$ to define $\psi_h(t)$.  Theorem \ref{thm:main} still holds in this setting.  Indeed, the functions $\psi_h(t)$ then become analytic in $h$.  See \cite{Chh}.
\end{remark}

\subsection{Convergence}
Let us begin by commenting on the convergence of the integral \eqref{eq:integral}, but only briefly.  By Theorem \ref{thm:posdec}, the decoration $f:X_{>0} \to \C$ is positive.  Indeed, as shown in \cite{BK2}, $f(x)$ is a positive sum of ratios of minors of $x$.  Since $x$ is totally nonnegative, it follows that $f(x) > 0$.  To show that the integral converges, in \cite{Rie2} and \cite{Chh} it is shown that the sets $\{x \in (X_t)_{> 0} \mid f(x) \leq M\}$ are bounded for any $M > 0$.  They obtain:

\begin{proposition}[\cite{Rie2,Chh}] \label{prop:convergence}
The function $e^{-\f(x)}$ has exponential decay in all directions of $(X_t)_{>0}$ for $t \in T_{>0}$ and thus the integral \eqref{eq:integral} converges for $t \in T_{>0}$.
\end{proposition}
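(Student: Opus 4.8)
The plan is to pass to logarithmic toric coordinates and reduce the statement to a single convexity fact about the Newton polytope of the decoration. Fix a reduced word $\i$ for $w_0$ and use the chart of the $GL_4$ computation above, parametrizing $(X_t)_{>0}$ by $s=(s_1,\dots,s_\ell)\in\R_{>0}^\ell$ via $x=x_\i(s)\,t\,w_0\,u'$; by the discussion of the canonical form, $\omega_t=\pm\frac{ds_1}{s_1}\cdots\frac{ds_\ell}{s_\ell}$, which in the coordinates $u_k=\log s_k$ is Lebesgue measure $\pm\,du_1\cdots du_\ell$ on $\R^\ell$. By Theorem \ref{thm:posdec} the decoration $\f$ is $\Theta$-positive, and, exactly as displayed in the $GL_4$ example, it is in fact a genuine posynomial: a finite positive sum $\f=\sum_m c_m(t)\,s^m$ of Laurent monomials whose coefficients $c_m(t)$ are monomials in the $t_i$, hence strictly positive for $t\in T_{>0}$ (this positivity of every coefficient is precisely why the statement is restricted to $t\in T_{>0}$). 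In the coordinates $u$ we thus have $\f(u)=\sum_m c_m(t)\,e^{\ip{m,u}}$, a positive combination of exponentials of the linear forms $\ip{m,\cdot}$.

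First I would record the elementary analytic reduction. Suppose $0$ lies in the interior of the Newton polytope $\mathrm{Newt}(\f)=\mathrm{conv}\{m\}$. Then $\delta:=\min_{|v|=1}\max_m\ip{m,v}$ is strictly positive, being continuous and pointwise positive on the sphere. Selecting for each $u$ an exponent $m$ with $\ip{m,u}\ge\delta\,|u|$ and discarding the other (positive) terms gives
\[
\f(u)\ \ge\ \Big(\min_m c_m(t)\Big)\,e^{\delta|u|},
\]
so that $e^{-\f}$ decays doubly-exponentially, hence certainly exponentially, in every direction of $(X_t)_{>0}$, and $\int_{\R^\ell}e^{-\f(u)}\,du$ converges. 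Conversely, if $0\notin\mathrm{int}\,\mathrm{Newt}(\f)$ a separating hyperplane yields a ray along which all $\ip{m,u}\le 0$ and hence $\f$ stays bounded; so this convexity statement is exactly the heart of the matter.

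To prove $0\in\mathrm{int}\,\mathrm{Newt}(\f)$ I would exploit the structure $\f(x)=\chi(u_1)+\chi(u_2)$ coming from the factorization $x=u_1\,(\cdot)\,w_0\,u_2$. In the chosen chart $u_1=x_\i(s)$, so $\chi(u_1)=\sum_k s_k$ contributes precisely the exponent vectors $\{+\mathbf e_k\}$, i.e.\ all positive coordinate directions. The complementary factor $u_2$ (the unipotent completion $u'$ in the example) is governed by the twist map of Proposition \ref{prop:eta}, which inverts the toric parameters (Lemma \ref{lem:monom}, and explicitly the $SL_3$ example where $\tau$ is inverse-monomial in $(a,b,c)$); consequently $\chi(u_2)$ contributes exponent vectors lying in the complementary negative directions. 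The content of the step is that these two families of exponents positively span $\R^\ell$. Equivalently — and this is the form in which I would actually verify it — the sublevel sets $\{\f\le M\}$ are bounded: if some $s_k\to\infty$ the linear term $s_k$ forces $\f\to\infty$, while if some $s_k\to 0$ the twisted factor $\chi(u_2)$ blows up, since $\eta$ carries the boundary of $X_{>0}$ to the boundary of $U^{w_0}_{>0}$. The one genuinely delicate point is checking that these two blow-up loci \emph{jointly} exhaust every boundary direction, which is where the precise combinatorics of the twist (equivalently, Berenstein--Kazhdan's minor formula for $\f$) is needed. This properness/convexity claim is the main obstacle; the analytic reduction of the second paragraph is routine once it is in hand, and I would cite the boundedness estimates of Rietsch \cite{Rie2} and Chhaibi \cite{Chh} rather than reprove the combinatorics from scratch.
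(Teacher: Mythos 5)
Your proposal is correct in the same qualified sense as the paper's own treatment: both rest ultimately on the boundedness of the sublevel sets $\{x \in (X_t)_{>0} \mid \f(x) \leq M\}$, which you decline to reprove and which the paper also only cites --- Proposition \ref{prop:convergence} is attributed outright to \cite{Rie2,Chh}, and the introduction explicitly defers convergence issues to those references. Where you differ is that the paper gives no analytic reduction at all: it merely notes that $\f$ is strictly positive on $(X_t)_{>0}$ (being, by \cite{BK2}, a positive sum of ratios of minors of a totally nonnegative element) and then quotes the sublevel-set bound. You instead pass to logarithmic coordinates and isolate an equivalent criterion: writing $\f(u) = \sum_m c_m(t)\,e^{\ip{m,u}}$, convergence --- indeed doubly-exponential decay --- holds if and only if $0$ lies in the interior of the Newton polytope, i.e.\ the exponents positively span $\R^\ell$. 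This is sharper than anything stated in the paper: it shows the boundedness result of \cite{Rie2,Chh} is not merely sufficient but necessary (your separating-hyperplane converse), and it pinpoints exactly why the hypothesis $t \in T_{>0}$ is needed (strict positivity of every coefficient $c_m(t)$).

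The one gap on your side is the posynomial claim itself. Theorem \ref{thm:posdec} gives only that $\f$ is positive in the sense of positive structures, i.e.\ a ratio of positive regular functions; your Newton-polytope argument needs the stronger statement that $\f$ is an honest Laurent polynomial in the chart coordinates whose coefficients are positive monomials in $t$, and you assert this by extrapolating from the $GL_4$ example. The statement is true in all types, but it requires justification: it follows from the Berenstein--Kazhdan minor formula for $\f$ because the denominator minors restrict to monomials on the chart (equivalently, it can be extracted from the Chamber Ansatz machinery of \cite{BZ}); note the paper itself writes the explicit Laurent-polynomial formula only in type $A$ (Section \ref{sec:gelfand}), remarking that it ``can be deduced from \cite{BZ}.'' Cite or prove that fact, and your argument becomes a legitimate --- and more informative --- substitute for the paper's bare citation.
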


In the rest of this section, there will be related integrals where parts of the integrands have been differentiated (arising from the action of $U(\g)$ on $W_\mu$).  This produces only extra rational factors in the integral and do not affect convergence.  We will thus not comment on convergence issues, and work only formally from now on.

Chhaibi's work \cite{Chh} contains a more serious treatment of analytic properties of this integral. 

\subsection{Whittaker vectors in $W_\mu$}
Let us define $\tgamma: U^{w_0} \to T$ by $\tgamma(u):= \gamma(\eta(u))$.  In the following we will regard functions on $U^{w_0}$ as elements of $W_\mu$ in the obvious way.

\begin{prop}
The function $f_+(u) = e^{-\chi(u)}$ is a $\n_+$-Whittaker vector of $W_\mu$.
% and a $T$-weight vector with weight
\end{prop}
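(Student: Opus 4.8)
The plan is to verify directly that $f_+(u) = e^{-\chi(u)}$ satisfies $e_i \cdot f_+ = -f_+$ for each $i \in I$, where $e_i \in \g$ acts on $W_\mu$ by the right-invariant vector field $(e_i \cdot f)(g) = \frac{d}{da} f(g \exp(a e_i))|_{a=0}$. Recall from the setup that $x_i(a) = \exp(a e_i)$, so $g \exp(a e_i) = g\, x_i(a)$. Thus the computation reduces to understanding how $\chi$ transforms under right multiplication by $x_i(a)$ on the open set $B_- U^{w_0}$, and the key point will be to extract the linear term in $a$.

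First I would fix $u \in U^{w_0}$ and analyze $u\, x_i(a)$. Since $W_\mu$ consists of functions on $B_- U^{w_0}$ transforming by $\mu$ on the left under $B_-$, and since $\chi$ was extended to a rational function on $G$ via $\chi(u_- t u_+) = \chi(u_+)$ (depending only on the $U$-part in a Gauss-type decomposition), the relevant quantity is the $U$-component of $u\, x_i(a)$. For $u \in U \subset U^{w_0}$ this is immediate: $u\, x_i(a)$ is again unipotent upper-triangular, and by the additivity relation $\chi(x_i(a) u) = a + \chi(u)$ noted in the sketch proof of Theorem \ref{thm:geomcrystal}, together with the fact that $\chi = \sum_i \chi_i$ is a character on $U$, we get $\chi(u\, x_i(a)) = \chi(u) + a$. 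Hence
\begin{equation*}
(e_i \cdot f_+)(u) = \left.\frac{d}{da}\, e^{-\chi(u\, x_i(a))}\right|_{a=0} = \left.\frac{d}{da}\, e^{-\chi(u) - a}\right|_{a=0} = -e^{-\chi(u)} = -f_+(u),
\end{equation*}
which is exactly the defining relation for an $\n_+$-Whittaker vector. The point is that $\chi$ restricted to $U$ is literally a Lie algebra character picking out the coefficient of $e_i$, so differentiating in the $x_i(a)$-direction yields the constant $1$ in each simple direction.

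The main obstacle, and the place requiring genuine care rather than the routine $SL_2$-style calculation above, is justifying that $f_+$ is a well-defined element of $W_\mu$ and that the right $\g$-action is computed on the correct domain: we are working on the dense open locus $B_- U^{w_0}$, and one must check that right multiplication by $x_i(a)$ for small $a$ keeps $u$ in a chart where $\chi$ is regular, and that the extension of $\chi$ to $G$ (via the $B_- U$ decomposition) is compatible with viewing functions on $U^{w_0}$ inside $W_\mu$. Once one confirms that the linear term of $\chi(u\, x_i(a))$ in $a$ is the constant $1$ independent of $u$ — which follows from the character property of $\chi$ on $U$ and the relation $\chi(x_i(a)u)=a+\chi(u)$ applied with the roles arranged by transpose/reversal symmetry if needed — the Whittaker relation drops out immediately. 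I would therefore present the short $SL_2$ reduction as the computational core and devote a sentence to the regularity/domain bookkeeping to make the identification with $W_\mu$ rigorous.
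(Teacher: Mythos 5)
Your proposal is correct and follows essentially the same route as the paper: both compute $(e_i\cdot f_+)(u)$ via the right action, use the character property $\chi(u\,x_i(a)) = \chi(u)+a$ to get $f_+(u\,x_i(a)) = e^{-a}f_+(u)$, and differentiate at $a=0$. The extra remarks on domain and regularity are fine but not needed beyond what the paper's one-line computation already contains.
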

\begin{proof}
We have 
$$
(e_i \cdot f_+)(u) = \left.\frac{d}{da} (u x_i(a))\right|_{a=0} = \left.\frac{d}{da} (e^{-a} f_+(u))\right|_{a=0} = -f_+(u).
$$
%Also 
%$$
%(t \cdot f_+)(u) = f_+(t^{-1} u) = f_+( t^{-1} u t t^{-1}) = e^{\chi(t^{-1} u t)} \mu(t^{-1})
%$$
\end{proof}

\begin{lemma}\label{lem:easy}
Suppose $u \in U$.  Let $uy_i(a) = bu'$ for $b \in B_-$ and $u' \in U$.  Then 
$$
\tau(u') = x_{i^*}(a) \tau(u) \qquad \text{and} \qquad \tgamma(u') = \gamma(b^{-1})\tgamma(u)
$$
where $i \mapsto i^*$ is the automorphism of $I$ induced by $-w_0$.
\end{lemma}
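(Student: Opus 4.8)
The plan is to deduce both identities from a single conjugation relation for the distinguished lift $\bar w_0$, combined with the uniqueness of the $B_-U$ (Gaussian) factorization on the big cell. Throughout I take $u \in U^{w_0}$ (so that $\tau(u)$, $\eta(u)$, $\tgamma(u)$ are defined) and regard the asserted equalities as identities of rational maps, valid wherever both sides make sense; in particular $u' = \pi^+(uy_i(a))$ is assumed to lie in $U^{w_0}$.

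First I would record the key input: the relation
$$
y_i(a)\,\bar w_0 = \bar w_0\, x_{i^*}(-a),
$$
equivalently $\bar w_0^{-1} y_i(a)\,\bar w_0 = x_{i^*}(-a)$. Indeed $\bar w_0^{-1}y_i(a)\bar w_0$ is a one-parameter unipotent subgroup conjugate to $y_i$, and since $y_i$ corresponds to the root $-\alpha_i$ while $w_0(-\alpha_i) = \alpha_{i^*}$, it lies in the root subgroup attached to $\alpha_{i^*}$; hence it equals $x_{i^*}(\epsilon_i a)$ for a scalar $\epsilon_i$. The value $\epsilon_i = -1$ is pinned down by the rank-one computation $\bar s^{-1} y_i(a)\bar s = x_i(-a)$ for the lift $\bar s = x_i(-1)y_i(1)x_i(-1)$, together with the compatibility of the braid-positive lifting (cf. \cite[Lemma 6.1]{BZ}). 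This sign is the one delicate point of the argument.

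Granting this, the rest is formal. From the hypothesis $uy_i(a) = bu'$ I get $u' = b^{-1}uy_i(a)$; multiplying by $\bar w_0$ on the right and applying the conjugation relation together with the defining equation $u\bar w_0 = \eta(u)\tau(u)^{-1}$ gives
$$
u'\bar w_0 = b^{-1}\,u\,y_i(a)\,\bar w_0 = b^{-1}(u\bar w_0)\,x_{i^*}(-a) = \big(b^{-1}\eta(u)\big)\big(\tau(u)^{-1} x_{i^*}(-a)\big).
$$
Here $b^{-1}\eta(u) \in B_-$ and $\tau(u)^{-1}x_{i^*}(-a) \in U$, so this is precisely the $B_-U$ factorization of $u'\bar w_0$. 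Uniqueness of that factorization then yields $\eta(u') = \pi^-(u'\bar w_0) = b^{-1}\eta(u)$ and $\tau(u')^{-1} = \pi^+(u'\bar w_0) = \tau(u)^{-1}x_{i^*}(-a)$, and inverting the second gives $\tau(u') = x_{i^*}(a)\tau(u)$, the first claim.

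For the weight identity I use that $\gamma$ is the restriction to $X$ of the quotient homomorphism $B_- \to B_-/U_- \simeq T$. Applying it to $\eta(u') = b^{-1}\eta(u)$ gives
$$
\tgamma(u') = \gamma(\eta(u')) = \gamma(b^{-1}\eta(u)) = \gamma(b^{-1})\,\gamma(\eta(u)) = \gamma(b^{-1})\,\tgamma(u),
$$
which is the second claim. I expect the only real obstacle to be the correct normalization of the sign $\epsilon_i$ in the conjugation relation, since it is exactly this sign that produces $x_{i^*}(a)$ rather than $x_{i^*}(-a)$ in the final formula; once that is fixed, both statements drop out of the uniqueness of the Gaussian factorization and the homomorphism property of $\gamma$.
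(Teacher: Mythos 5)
Your proof is correct and follows essentially the same route as the paper's: both hinge on the conjugation relation $x_{i^*}(a) = \bar w_0^{-1} y_i(-a)\bar w_0$, apply it to $u'\bar w_0 = b^{-1}u\,y_i(a)\bar w_0$, and read off $\eta(u') = b^{-1}\eta(u)$ and $\tau(u') = x_{i^*}(a)\tau(u)$ from the uniqueness of the $B_-\cdot U$ factorization, with the weight identity following from the homomorphism property of $\gamma$. The only difference is that you justify the sign $\epsilon_i=-1$ (via the rank-one computation and braid-compatibility of the lifts), which the paper simply asserts.
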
 
\begin{proof}
Since $x_{i^*}(a) = (\bar w_0)^{-1} y_i(-a) \bar w_0$, we compute that $\eta(u') = (b^{-1} u y_i(a)) w_0 (x_{i^*}(a) \tau(u)) = b^{-1}\eta(u)$.
\end{proof}

\begin{prop}
The function $f_-(u) = \nu(\tgamma(u)) e^{-\chi(\tau(u))}$ is a $\n_-$-Whittaker vector of $W_\nu$.
\end{prop}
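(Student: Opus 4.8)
The plan is to mirror the computation used above for the $\n_+$-Whittaker vector $f_+$: since the $\g$-action on $W_\nu$ is by right-differentiation, the condition $f_i \cdot f_- = -f_-$ amounts to showing that
$$
\left.\frac{d}{da} f_-(u\, y_i(a))\right|_{a=0} = -f_-(u)
$$
for every $i \in I$, where $y_i(a) = \exp(a f_i)$. So the whole proposition reduces to understanding how $f_-(u\,y_i(a))$ depends on $a$, and this is exactly what Lemma \ref{lem:easy} is designed to supply.

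First I would write the decomposition $u\, y_i(a) = b\, u'$ with $b \in B_-$ and $u' \in U$ (valid generically, with $b = e$, $u' = u$ at $a = 0$). Because $f_- \in W_\nu$ satisfies $f_-(b u') = \nu(b) f_-(u')$, this gives
$$
f_-(u\, y_i(a)) = \nu(b)\, \nu(\tgamma(u'))\, e^{-\chi(\tau(u'))}.
$$
Now I would feed in the two identities from Lemma \ref{lem:easy}, namely $\tgamma(u') = \gamma(b^{-1})\tgamma(u)$ and $\tau(u') = x_{i^*}(a)\tau(u)$. The key cancellation is in the torus factor: extending $\nu$ from $T$ to $B_-$ is done precisely through the projection $\gamma: B_- \to B_-/U_- \simeq T$, so $\nu(b) = \nu(\gamma(b))$, and combined with $\nu(\tgamma(u')) = \nu(\gamma(b))^{-1}\nu(\tgamma(u))$ the factors $\nu(b)$ and $\nu(\gamma(b^{-1}))$ annihilate, leaving just $\nu(\tgamma(u))$. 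For the exponential factor I would use that $\chi$ is additive on $U$ together with $\chi(x_{i^*}(a)) = a$, giving $\chi(\tau(u')) = a + \chi(\tau(u))$. Putting these together yields $f_-(u\, y_i(a)) = e^{-a} f_-(u)$, and differentiating at $a = 0$ produces $-f_-(u)$, as required. Since $i \mapsto i^*$ permutes $I$, running over all $i$ poses no issue.

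The only real content is Lemma \ref{lem:easy}; given it, the computation is routine. What I would flag as the main pitfall is the torus-factor bookkeeping: one must use that the character $\nu$ on $B_-$ is literally $\nu \circ \gamma$, so that the $\nu(b)$ coming from the $W_\nu$-equivariance exactly cancels the $\gamma(b^{-1})$ twist produced by Lemma \ref{lem:easy}. This matching of the $\tgamma$-factor in the definition of $f_-$ against the transformation law of $W_\nu$ is precisely what forces the definition $f_-(u) = \nu(\tgamma(u)) e^{-\chi(\tau(u))}$ to be the correct one.
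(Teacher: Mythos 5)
Your proof is correct and follows essentially the same route as the paper: both reduce the claim to the decomposition $u\,y_i(a) = b u'$ from Lemma \ref{lem:easy}, cancel the torus factors, and read off $f_-(u\,y_i(a)) = e^{-a}f_-(u)$ before differentiating at $a=0$. In fact your bookkeeping of the cancellation between $\nu(b)$ (from the $W_\nu$-equivariance, where $\nu$ on $B_-$ means $\nu\circ\gamma$) and the twist $\gamma(b^{-1})$ (from Lemma \ref{lem:easy}) is spelled out more carefully than in the paper's one-line display, which is the only genuinely delicate point of the argument.
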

\begin{proof}
By Lemma \ref{lem:easy},
$$
f_-(uy_i(a)) =  f_-(bu') = \nu(b^{-1})\nu(\tgamma(u')) e^{-\chi_i(\tau(u'))} = \nu(\tgamma(u))e^{-\chi_i(\tau(u))} e^{-a}.
$$
Differentiating, we get $f_i \cdot f_- = -f_-$.
\end{proof}

\subsection{Pairing}
Suppose $V_+$ has central character $\xi_\mu$ and $V_-$ has central character $\xi_\nu$.  Then the existence of a $\g$-invariant pairing $\ip{\cdot,\cdot}:V_- \times V_+ \to \C$ implies that $\xi_{\mu} = \xi_{-w_0 \nu}$.  Note that this is satisfied if $\mu +\nu = -2\rho$, since then $\mu= w_0 (-w_0\nu+\rho) - \rho = w_0 \cdot (-w_0 \nu)$ so that $\xi_{\mu} = \xi_{-w_0\nu}$.

\begin{prop}\label{prop:pairing}
Suppose $f \in W_\nu$ and $f' \in W_\mu$ where $\nu+\mu = -2\rho$.  Then assuming it converges, the pairing 
$$
\ip{f,f'} = \int_{U^{w_0}_{>0}} f(u)f'(u)  \rho(\tgamma(u)) \omega_U
$$
is a $\g$-invariant pairing.
\end{prop}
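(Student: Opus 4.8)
The plan is to verify the defining identity of a $\g$-invariant pairing, $\ip{X\cdot f,f'}+\ip{f,X\cdot f'}=0$, directly from the integral formula. Since this condition is linear in $X$ and is preserved under Lie brackets — if it holds for $X$ and for $Y$, a two-line manipulation shows it holds for $[X,Y]$ — and since $\g$ is generated by the Chevalley generators $e_i,f_i$, it suffices to treat $X=e_i$ and $X=f_i$ for each $i\in I$. Throughout I regard $f,f'$ as functions on $U^{w_0}$ and work formally, assuming the decay needed to discard boundary terms (consistent with the hypothesis ``assuming it converges'' and Proposition \ref{prop:convergence}).

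First I would translate the right action $(X\cdot f)(u)=\frac{d}{da}f(u\exp(aX))|_{a=0}$ into geometry on $U^{w_0}$. For $X=e_i$, right multiplication by $x_i(a)$ keeps $u$ inside the open dense $U^{w_0}\subset U$, so there is no $B_-$-factor and $e_i$ acts simply as the directional derivative $R_i$ along the right $x_i$-flow; hence $(e_i\cdot f)f'+f(e_i\cdot f')=R_i(ff')$. For $X=f_i$, right multiplication by $y_i(a)$ leaves $U^{w_0}$, and I would write $uy_i(a)=b(a)u'(a)$ with $b(a)\in B_-$, $u'(a)\in U^{w_0}$ and $b(0)=e$; this is exactly the situation of Lemma \ref{lem:easy}. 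Differentiating $f(uy_i(a))=\nu(b(a))f(u'(a))$ and its analogue for $f'$, and writing $V_i$ for the vector field $\frac{d}{da}u'(a)|_{a=0}$ and $\dot\rho_i(u)=\frac{d}{da}\log\rho(b(a))|_{a=0}$, I obtain $(f_i\cdot f)f'+f(f_i\cdot f')=V_i(ff')-2\dot\rho_i\,ff'$, where the factor $-2$ is exactly where the hypothesis $\nu+\mu=-2\rho$ enters.

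The heart of the argument is then to recognize each integrand as an exact top-form. Using Lemma \ref{lem:easy}, the relation $\tgamma(u'(a))=\gamma(b(a)^{-1})\tgamma(u)$ gives $V_i(\rho(\tgamma))=-\dot\rho_i\,\rho(\tgamma)$, while $\tgamma$ is invariant under the right $x_i$-flow (there $b(a)=e$). Combining this with the transformation law of the canonical form — namely $\mathcal{L}_{R_i}\omega_U=0$ and $\mathcal{L}_{V_i}\omega_U=-\dot\rho_i\,\omega_U$ — a short Leibniz-rule computation shows that in both cases the integrand equals $\mathcal{L}_{R_i}(ff'\rho(\tgamma)\omega_U)$, respectively $\mathcal{L}_{V_i}(ff'\rho(\tgamma)\omega_U)$. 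Since $ff'\rho(\tgamma)\omega_U$ is a top-form, Cartan's formula turns each Lie derivative into $d\iota_{R_i}(\cdots)$, respectively $d\iota_{V_i}(\cdots)$, an exact form, and Stokes' theorem finishes the proof: the boundary at infinity contributes nothing by the exponential decay, and the toric boundary contributes nothing because it has codimension $2$ in $U^{w_0}$ (Proposition \ref{prop:codim}).

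I expect the main obstacle to be the transformation law of the canonical volume form, $\mathcal{L}_{V_i}\omega_U=-\dot\rho_i\,\omega_U$ together with $\mathcal{L}_{R_i}\omega_U=0$: this says precisely that $\omega_U$ is scaled by $\rho(b(a))^{-1}$ under the right $y_i$-flow and preserved under the right $x_i$-flow, and it is what makes the $-2\rho$ coming from $ff'$ balance against the $-\rho$ from $\rho(\tgamma)$ and the $-\rho$ from $\omega_U$. I would establish it either by a direct computation in a toric chart $x_\i(a_1,\dots,a_\ell)$, tracking how $\omega_U=\frac{da_1}{a_1}\cdots\frac{da_\ell}{a_\ell}$ changes under the braid and monomial relations of Propositions \ref{prop:xx} and \ref{prop:canon}, or more conceptually from the description of $\omega_U$ as the meromorphic form on $G/B_-$ with simple poles along $\partial R_{1,w_0}$ and polar class $2\rho$ (Lemma \ref{lem:boundary}), which controls how it transforms under the $G$-action.
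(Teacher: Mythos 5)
Your overall architecture --- recognize the integrand as a Lie derivative of the top form $f(u)f'(u)\rho(\tgamma(u))\,\omega_U$, convert it to an exact form by Cartan's formula, and kill the boundary with Stokes plus exponential decay --- is exactly the skeleton of the paper's argument, and your reduction to the generators $e_i,f_i$ is legitimate (the invariance condition is closed under brackets and linear combinations). However, two of the concrete facts you assert in the $e_i$ case are false. Neither $\tgamma$ nor $\omega_U$ is separately invariant under the right $x_i$-flow: writing $u = x_{i_1}(a_1)\cdots x_{i_\ell}(a_\ell)$ with $i_\ell = i$, Proposition \ref{prop:gamma} (together with $s_{i_1}\cdots s_{i_{\ell-1}}\alpha_{i_\ell} = -w_0\alpha_i = \alpha_{i^*}$) gives
$$
\tgamma\bigl(u\,x_i(a)\bigr) = \tgamma(u)\,\alpha_{i^*}^\vee\!\left(\frac{a_\ell+a}{a_\ell}\right) \neq \tgamma(u),
\qquad
R_{x_i(a)}^*\,\omega_U = \frac{a_\ell}{a_\ell+a}\,\omega_U \neq \omega_U,
$$
so $\mathcal{L}_{R_i}\omega_U = -a_\ell^{-1}\omega_U \neq 0$ and $R_i\bigl(\rho(\tgamma)\bigr) = a_\ell^{-1}\rho(\tgamma) \neq 0$. (Already for $SL_2$: $\tgamma(x_1(a)) = \alpha_1^\vee(a)$ and $\omega_U = da/a$, neither invariant under $a \mapsto a + a'$.) The flaw in your reasoning is that Lemma \ref{lem:easy} computes $\tgamma$ through the twist map $\eta$, not through the Iwasawa-type factorization of $u\,x_i(a)$; the fact that the $B_-$-factor $b(a)$ is trivial for the $x_i$-flow does not mean the torus part of $\eta(u\,x_i(a))$ equals that of $\eta(u)$.

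What saves the argument --- and what the paper isolates as a separate lemma --- is that these two defects exactly cancel: the \emph{product} $\rho(\tgamma(u))\,\omega_U$ is invariant under the right $U$-action, because $\rho\bigl(\alpha_{i^*}^\vee((a_\ell+a)/a_\ell)\bigr) = (a_\ell+a)/a_\ell$ cancels the Jacobian factor $a_\ell/(a_\ell+a)$. With that single replacement your $e_i$-computation goes through: $\mathcal{L}_{R_i}\bigl(ff'\rho(\tgamma)\omega_U\bigr) = R_i(ff')\,\rho(\tgamma)\omega_U$, which is the integrand. In the $f_i$ case your claims are true ($V_i(\rho(\tgamma)) = -\dot\rho_i\,\rho(\tgamma)$ does follow from Lemma \ref{lem:easy}, and $\mathcal{L}_{V_i}\omega_U = -\dot\rho_i\,\omega_U$ is correct), but the latter is exactly the crux and you leave it unproved; the chart bookkeeping you propose is not routine since $V_i$ is a complicated vector field in any chart. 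The clean way to obtain all of these transformation laws at once is the paper's: identify $\rho(\tgamma)\omega_U$ with the meromorphic section of the canonical bundle $L_{-2\rho}$ of $B_-\backslash G$ corresponding to the element of $W_{-2\rho}$ that is identically $1$ on $U^{w_0}$; $G$-equivariance of this identification then yields $\mathcal{L}_X\bigl(ff'\rho(\tgamma)\omega_U\bigr) = \bigl((X\cdot f)f' + f(X\cdot f')\bigr)\rho(\tgamma)\omega_U$ for \emph{every} $X \in \g$ in one stroke, eliminating the generator-by-generator analysis entirely. So: right strategy, but as written the key invariance step rests on two false statements; replace them by the invariance of the product $\rho(\tgamma)\omega_U$ (or by the canonical-bundle identification) and the proof becomes correct.
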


There is an embedding of $U^{w_0}$ into the flag variety $\Fl = B_- \backslash G$ via $u \mapsto B_-\backslash B_-u$ (we use the left quotient here to match the choices of our principal series representations).  As a first step we establish that
\begin{lemma}
The rational $n$-form $\omega' = \omega(u)\rho(\tgamma(u))$, considered as a meromorphic $n$-form on $B_-\backslash G$, is $U$-invariant.
\end{lemma}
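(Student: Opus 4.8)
The goal is to show that the meromorphic $n$-form $\omega' = \omega_U(u)\,\rho(\tgamma(u))$ on $B_- \backslash G$ is invariant under the rational right $U$-action. The plan is to reduce this to the action of each one-parameter subgroup $x_i(a)$ and to exploit the two structural facts we already have in hand: that the canonical form $\omega_U$ is preserved by the geometric crystal action (Theorem \ref{thm:geomcrystal}, via $\eta$), and that the weight $\tgamma$ transforms by a controlled torus factor under the $U$-action (Lemma \ref{lem:Uaction}).

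First I would set up the computation on the open chart $U^{w_0}$, where $\omega_U$ is the pushforward of the logarithmic torus form along a toric chart $x_\i$. Since $U$ is generated by the $x_i(a)$, it suffices to check invariance under the action of a single $x_i(a)$ on $u \in U^{w_0}$. By Lemma \ref{lem:Uaction}, the action $u \mapsto x_i(a)(u)$ is $x_i(a)\cdot u \cdot x_i(a')$ with $a' = -a/\big((1+a\ph_i(u))\alpha_i(\gamma(u))\big)$, and it produces the weight transformation $\tgamma(x_i(a)(u)) = \alpha_i^\vee(1+a\ph_i(u))\,\tgamma(u)$, where here one must be careful to transport $\ph_i$ and $\gamma$ from $X$ back to $U^{w_0}$ along $\eta$. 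The key point is that the crystal operator $e_i^c$ is (after translating by a torus element) exactly this $U$-action, so by the volume-preserving statement of Theorem \ref{thm:geomcrystal} and the canonical equivalence of Proposition \ref{prop:canonicallyequiv}, the pullback of $\omega_U$ under $u \mapsto x_i(a)(u)$ equals $\omega_U$ up to the Jacobian coming from the extra torus factor.

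The heart of the matter is therefore bookkeeping of the two compensating factors. Applying $x_i(a)$ multiplies $\rho(\tgamma(u))$ by $\rho\big(\alpha_i^\vee(1+a\ph_i(u))\big) = (1+a\ph_i(u))^{\langle \rho,\alpha_i^\vee\rangle} = 1+a\ph_i(u)$, using $\langle \rho, \alpha_i^\vee\rangle = 1$. I expect this factor to be cancelled precisely by the Jacobian of the transformation on the form $\omega_U$: the crystal action preserves $\omega_U$ exactly on each fiber $X_t$, but moving between fibers (which is what the $U$-action does, since it changes $\tgamma$) introduces a scaling of the transverse coordinate that contributes the reciprocal factor. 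The cleanest route is to verify the whole cancellation in the rank-one $SL_2$ reduction, as in the proof of Lemma \ref{lem:Uaction}, where one writes $x = \left(\begin{smallmatrix} b & 0 \\ c & b^{-1}\end{smallmatrix}\right)$ and computes both the log-form transformation and the $\rho$-weight factor by hand; the general case then follows since $x_i(a)$ acts through the $\phi_i(SL_2)$ factor and the braid relations guarantee consistency across reduced words.

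The main obstacle, and the step requiring genuine care, is matching conventions between the two pictures related by the twist map $\eta$: the functions $\ph_i,\gamma$ are defined on $X$, whereas $\tgamma$ and the $U$-action in the lemma live on $U^{w_0}$, so one must consistently transport everything through $\eta$ and verify that the volume-preservation of Theorem \ref{thm:geomcrystal} (stated for $e_i^c$ on $X_t$) translates correctly to a statement about $\omega_U$ on $U^{w_0}$. Once the $SL_2$ Jacobian computation is pinned down and the factor $1+a\ph_i(u)$ is seen to cancel the $\rho$-weight contribution, $U$-invariance of $\omega'$ follows immediately, and hence the invariance of $\omega'$ as a form on $B_-\backslash G$.
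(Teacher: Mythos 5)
There is a genuine gap here: you have identified the wrong $U$-action. The action in the lemma is right translation on the flag variety, $R_{x_i(a)}\colon B_-u \mapsto B_-ux_i(a)$, which on the open cell $U^{w_0} \subset B_-\backslash G$ is plain group multiplication $u \mapsto u\,x_i(a)$ --- no projection and no compensating factor $x_i(a')$. This is the relevant action because the lemma feeds into Proposition \ref{prop:pairing}, where $\g$-invariance of the pairing is obtained by differentiating right translation in the principal series $W_\mu$. It is \emph{not} the rational action $u \mapsto x_i(a)\cdot u \cdot x_i(a')$ of Lemma \ref{lem:Uaction}: that action is left multiplication followed by projection back to $B_-$, and (as you correctly note) it coincides with the crystal operators $e_i^c$ after reparametrization. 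But precisely for that reason your hoped-for cancellation cannot occur: by Theorem \ref{thm:geomcrystal} and Proposition \ref{prop:canonicallyequiv}, the crystal action preserves the canonical form \emph{exactly} --- the Jacobian you expect to supply a reciprocal factor is identically $1$ --- while $\rho(\tgamma)$ gets multiplied by the nonconstant factor $1+a\ph_i$. So under the action you chose, $\omega' = \omega_U\,\rho(\tgamma)$ is demonstrably \emph{not} invariant; the invariant object for that action is $\omega_U$ alone. (Relatedly, the crystal action does not ``move between fibers'': $e_i^c$ preserves each $X_t$, changing the weight $\gamma$ but not the highest weight $t$, so the transverse-coordinate heuristic has no content.)

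The correct argument is a short computation in coordinates adapted to $i$, with no appeal to the crystal action or to $SL_2$ reduction. Choose a reduced word $\i = (i_1,\ldots,i_\ell)$ for $w_0$ with $i_\ell = i$, and write $u = x_\i(a_1,\ldots,a_\ell)$. Then $u\,x_i(a) = x_\i(a_1,\ldots,a_{\ell-1},a_\ell+a)$, so right translation is a shift of the last coordinate, and the dlog form transforms by $R_{x_i(a)}^*\,\omega_U = \frac{a_\ell}{a_\ell+a}\,\omega_U$. On the other hand, Proposition \ref{prop:gamma} gives $\tgamma(u) = \prod_{k}\beta_k^\vee(a_k)$ with $\beta_\ell = s_{i_1}\cdots s_{i_{\ell-1}}\alpha_{i_\ell} = -w_0\alpha_i = \alpha_{i^*}$, hence $\rho(\tgamma(u\,x_i(a))) = \frac{a_\ell+a}{a_\ell}\,\rho(\tgamma(u))$, using $\ip{\rho,\alpha_{i^*}^\vee}=1$. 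The two factors cancel, giving $R_{x_i(a)}^*\,\omega' = \omega'$; since the $x_i(a)$ generate $U$, the lemma follows.
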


For $g \in G$, let $R_g: \Fl \to \Fl$ denote the isomorphism given by right multiplication by $g$.  We let $R_g^*$ denote the pullback of forms.

\begin{proof}
Let $u' = ux_i(a)$.  Suppose $u = x_{i_1}(a_1) \cdots x_{i_\ell}(a_\ell)$ and $i_\ell = i$.  
Then
$$
R_{x_i(a)}^*(\omega)(u) = \omega(u) \frac{a_\ell}{a_\ell + a}.
$$
But by Proposition \ref{prop:gamma} we also have $\tgamma(u') = \tgamma(u)\alpha_{i^*}^\vee(\frac{a_\ell+a}{a})$ and $\rho(\alpha_{i^*}^\vee(\frac{a_\ell+a}{a})) = \frac{a_\ell+a}{a}$, cancelling the above factor.
%By Lemma \ref{lem:Uaction},
%$\gamma(x') = \gamma(x)\alpha_i^\vee(1+a\ph_i(x))
%$, so $\rho(\gamma(x')) = \rho(\gamma(x))(1+a\ph_i(x))$.
%Also if $x = u_1w_0u_2$ where $u = x_{i_1}(a_1) \cdots x_{i_\ell}(a_\ell)$ and $i_\ell = i$ then $x' = u_1'w_0 u_2'$ where
%$u_1' = x_{i_1}(a_1 + a) \cdots x_{i_\ell}(a_\ell)$.  Thus
%$$
%L_{x_i(a)}^*(\omega')(x) = \omega'(x) \frac{a_1}{a_1+a} (1+a/a_1)
%$$
%since $\ph_i(x) = 1/a_1$ by Lemma \ref{lem:ph}.
\end{proof}

Thus the form $\omega'$ has no poles on the open Schubert cell $B_- \backslash B_-U$ but a double pole along the Schubert divisors, the irreducible components of the complement in $\Fl$.

The space $W_\mu$ can be identified with the space of smooth sections of the line bundle $L_{\mu} = \C_\mu \times_{B_-} G \to B_- \backslash G$ over the open subset $B_- \backslash B_-U^{w_0}$.  Since $U$ acts transitively on $B_- \cdot U$, we obviously have $\dim(\Gamma_{\rm hol}(B_- \backslash B_-U, L_\mu)^U) \leq 1$.  The restriction of these $U$-invariant sections to $B_-U^{w_0}$ can be identified with the functions $f \in W_\mu$ satisfying
$f(bu) = c\mu(b)$ for some constant $c$.  The canonical bundle of the flag variety $\Fl$ is known to be $G$-equivariantly isomorphic to $L_{-2\rho}$.  It then follows that under this isomorphism $\omega'$ can be identified with the the meromorphic section of $L_{-2\rho}$ on $\Fl$ which is identified with a function $f \in W_{-2\rho}$ satisfying
$$
f(bu) = c\rho(b)^{-2}
$$
for some constant $c$.

\begin{proof}[Proof of Proposition \ref{prop:pairing}]
%First note that by Proposition \ref{prop:canonicalequiv}
%$$\int_{X_{>0}} f(x)f'(x)  \rho(\gamma(x)) \omega_X = 
%\int_{U^{w_0}_{>0}} f(u)f'(u)  \rho(\gamma(u)) \omega_U,$$
%so we can just work on $U^{w_0}$, or the image of $U^{w_0}$ in $\Fl$.

Let $X \in \g$.  We first claim that 
\begin{equation}\label{eq:pullback}
(\exp(aX) \cdot f)(\exp(aX) \cdot f') \rho(\tgamma(u)) \omega_U =  R^*_{\exp(aX)} (f(u)f'(u)  \rho(\tgamma(u)) \omega_U)
\end{equation}
where on the left hand side we have the local $G$-action in the spaces $W_\nu, W_\mu$, and on the right hand side we have pullbacks of meromorphic forms.  (The equality is to be interpreted locally: for each $u \in U$, the LHS makes sense for sufficiently small $a$.)

To see this notice that the pointwise product $f(u)f'(u)$ can be thought of as an element of $W_{\nu+\mu} = W_{-2\rho}$.  The local $G$-action on this space is identified with the action of $G$ on sections of $L_{-2\rho}$, which can in turn be identified with the action of $G$ on meromorphic $n$-forms.  Under this identification, by the previous discussion, $f(u)f'(u) \in W_{-2\rho}$ can be identified with the meromorphic $n$-form $f(u)f'(u)  \rho(\tgamma(u)) \omega_U$ on $B_-\backslash G$.  Differentiating \eqref{eq:pullback}, we get
\begin{align*}
(X \cdot f) f' + f (X \cdot f') \rho(\tgamma(u)) \omega_U &= \left.\frac{d}{da}R^*_{\exp(aX)} (f(u)f'(u)  \rho(\tgamma(u)) \omega_U)\right|_{a=0}\\ &= d \circ i_{X}  (f(u)f'(u)  \rho(\tgamma(u)) \omega_U),
\end{align*}
where $i_{X}$ denotes the contraction with respect to the vector field on $B_-\backslash G$ given by $X \in \g$.  (We have used the Cartan formula $\L_X \omega = i_{X} d\omega + d \circ i_X \omega$, and the fact that our form is a top form.)  Thus
$$
\ip{X \cdot f, f'} + \ip{f, X \cdot f'} = \int_{\Gamma} d \circ i_{X}  (f(u)f'(u)  \rho(\tgamma(u)) \omega_U).
$$
We want to apply Stoke's theorem to the right hand side even though $\Gamma = U^{w_0}_{>0}$ is not compact.  To that end, suppose $\Gamma$ lies in a compactification $\bar \Gamma = \Gamma \sqcup \partial \Gamma$.  Then by Stoke's formula the right hand side is equal to $\int_{\partial \Gamma} i_{X}  (f(u)f'(u)  \rho(\tgamma(u)) \omega_U)$.   The form $(f(u)f'(u)  \rho(\tgamma(u))$ and with it, $i_{X}  (f(u)f'(u)  \rho(\tgamma(u))$, has exponential decay in all directions on $\Gamma$ and so is identically 0 on $\partial \Gamma$ (see Proposition \ref{prop:convergence}).  This argument can also be carried out by approximating $\Gamma$ by an increasing sequence $\Gamma_1 \subset \Gamma_2 \subset \cdots$ of open submanifolds which cover $\Gamma$.  See also \cite[p.20]{Rie2}.
\end{proof}

%\begin{lemma}
%Let us consider the function $\nu(\gamma(x))$ a function on $U$.  Suppose
%$zu = u'z'$ where $u' \in U$ and $z' \in B_-$.  Then 
%$$
%\nu(\gamma(u')) = \nu(\gamma(u)) \nu(z').
%$$
%\end{lemma}
%\begin{proof}
%\end{proof}

\subsection{Proof of Theorem \ref{thm:main}}
Let us compute the function $t \mapsto \rho(t)\ip{f_-, t \cdot f_+}$:
\begin{align*}
&\rho(t) \int_{U_{\geq 0}} f_-(u) (t \cdot f_+)(u) \rho(\tgamma(u)) \omega_U\\
&= \rho(t)\mu(t) \int_{U_{\geq 0}} f_-(u) f_+(t^{-1}ut) \rho(\tgamma(u)) \omega_U \\
&=\rho(t)\mu(t) \int_{U_{\geq 0}} (\nu+\rho)(\tgamma(u))e^{-\chi(\tau(u))} e^{-\chi(t^{-1}ut)} \omega_U \\
&=(\rho+\mu)(t)  \int_{U_{\geq 0}} (\nu+\rho)(\gamma(uw_0 \tau(u)))e^{-\f(t^{-1}uw_0 \tau(u))} \omega_U \\
&=(\rho+\mu)(t) (\nu+\rho)(t)\int_{(X_{t^{-1}})_{\geq 0}} (\nu+\rho)(\gamma(x)) e^{-\f(x)} \omega_{t^{-1}}\\
&=\int_{(X_{t^{-1}})_{\geq 0}} (\nu+\rho)(\gamma(x)) e^{-\f(x)} \omega_{t^{-1}}
\end{align*}
where we have used that $\nu+\mu+2\rho= 0$, $\gamma(x = (t^{-1}ut) t^{-1} w_0\tau(u)) = t^{-1}\gamma(uw_0\tau(u))$, and that the form $\omega$ on $U$ is invariant under conjugation by $t \in T$.

But by Proposition \ref{P:Toda}, the function
$$
\psi_{\la}(t) = \int_{(X_{t})_{\geq 0}} \la(\gamma(x)) e^{-\f(x)} \omega_{t}
$$
is a solution of the quantum Toda lattice with infinitesimal character $\xi_{\la-\rho}$.

\section{Whittaker functions as geometric analogues of Schur functions}\label{sec:schur}
In this section we assume $G = GL_{n+1}$.  While in the previous sections we have supposed that $G$ is semisimple, all the results extend naturally to this case.  Using $GL_{n+1}$ instead of $SL_{n+1}$ makes the combinatorics slightly more elegant. 

\subsection{Integrals over Gelfand-Tsetlin patterns}\label{sec:gelfand}
We fix the reduced word $$\i=({n},{n-1}, \ldots, 1,{n},{n-1},\ldots,2,\ldots,{n})$$ of $w_0$.  So for $n = 4$ we would have $\i = (4,3,2,1,4,3,2,4,3,4)$.  Let 
$$
u = x_\i(a_{n,n+1},a_{n-1,n+1},\ldots,a_{1,n+1},a_{n-1,n},a_{n-2,n},\ldots,a_{1,2})
$$
and
$$
t = \diag(t_{n+1},t_n,\ldots,t_1).
$$

\begin{prop}
Let $x = utw_0u' \in X$.  Then
$$
\gamma(x) = t\prod_{1 \leq i < j \leq n+1} \alpha^\vee_{i,j}(a_{i,j}).
$$
Thus $\gamma(x) = \diag(\gamma_1(x),\ldots,\gamma_{n+1}(x))$ where
$$
\gamma_i(x) = t_i \frac{\prod_{k=i+1}^{n+1} a_{i,k} }{\prod_{k=1}^{i-1} a_{k,i}}.
$$
\end{prop}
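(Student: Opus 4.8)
This is the type-$A$ incarnation of Proposition~\ref{prop:gamma}, so the plan is to apply that result to the triangular reduced word $\i$ and then evaluate the roots $\beta_k$ explicitly. First I would reduce $\gamma(x)$ for $x = utw_0u' \in X_t$ to $\gamma(\eta(u))$ for $\eta(u) \in X$. Using the twist map of Proposition~\ref{prop:eta} and the element $\tau(u)$, one checks that $x = \eta(u)\,t^{w_0}$ with $t^{w_0} = \bar w_0^{-1}t\,\bar w_0$; the prescribed $u' = (t^{w_0})^{-1}\tau(u)\,t^{w_0}$ is exactly the one forced by $x \in B_-$, using $\bar w_0\, t^{w_0} = t\,\bar w_0$. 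Since right multiplication by an element of $T$ merely rescales the torus part of an element of $B_-$, this gives $\gamma(x) = \gamma(\eta(u))\,t^{w_0}$. In type $A$ the factor $t^{w_0}$ is $t$ with its diagonal reversed, and this reversal is what relates the index on $t_i$ to the matrix position in the final formula. By Proposition~\ref{prop:gamma}, $\gamma(\eta(u)) = \prod_{k=1}^{\ell}\beta_k^\vee(a_k)$ with $\beta_k = s_{i_1}\cdots s_{i_{k-1}}\alpha_{i_k}$.

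The crux, and the step I expect to be the main obstacle, is to show that if the $k$-th letter of $\i$ carries the parameter $a_{i,j}$, then $\beta_k = e_i - e_j = \alpha_{i,j}$, where $e_1,\ldots,e_{n+1}$ are the standard basis vectors. I would exploit the block structure of $\i$: its $b$-th block is $(s_n,s_{n-1},\ldots,s_b)$, and the letter $s_m$ in this block carries the parameter $a_{m-b+1,\,n-b+2}$. The computation splits into two telescopes. Inside a block, $(s_n s_{n-1}\cdots s_{m+1})(\alpha_m) = e_m - e_{n+1}$ directly. For the completed blocks, let $v_c$ denote the product of the first $c$ blocks; an induction on $c$, using that the $c$-th block realizes the cycle $e_c \mapsto e_{n+1}\mapsto e_n \mapsto \cdots \mapsto e_{c+1}\mapsto e_c$, proves the key lemma $v_c(e_i) = e_{i-c}$ for every $i > c$. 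Applying this to the letter $s_m$ in block $b$ (so $m \ge b$) gives $\beta_k = v_{b-1}(e_m - e_{n+1}) = e_{m-b+1} - e_{n-b+2} = \alpha_{m-b+1,\,n-b+2}$, which matches its parameter as claimed.

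Granting the identification $\beta_k = \alpha_{i,j}$, Proposition~\ref{prop:gamma} collapses the product to $\gamma(\eta(u)) = \prod_{1\le i<j\le n+1}\alpha_{i,j}^\vee(a_{i,j})$, and hence $\gamma(x) = \big(\prod_{i<j}\alpha_{i,j}^\vee(a_{i,j})\big)\,t^{w_0}$, which is the first displayed formula up to the diagonal reversal noted above. To extract components, recall that $\alpha_{i,j}^\vee(c)$ is the diagonal matrix with $c$ in position $i$ and $c^{-1}$ in position $j$; hence the position-$p$ entry of $\prod_{i<j}\alpha_{i,j}^\vee(a_{i,j})$ is $\prod_{k>p}a_{p,k}\big/\prod_{k<p}a_{k,p}$, and multiplying by the matching entry of $t^{w_0}$ yields $\gamma_i(x) = t_i\,\prod_{k=i+1}^{n+1}a_{i,k}\big/\prod_{k=1}^{i-1}a_{k,i}$, as stated. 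As a final sanity check I would verify, for $n=3$, that this reproduces the explicit matrix $x$ and weight $\gamma$ recorded in the $GL_4$ computation of Section~\ref{sec:combcrystal}.
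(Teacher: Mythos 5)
Your proposal is correct and takes essentially the same route as the paper: the paper's entire proof is ``This just follows from Proposition \ref{prop:gamma},'' and your argument is precisely that deduction carried out in full --- the twist-map reduction $x = \eta(u)\,t^{w_0}$, the block-by-block computation showing $\beta_k = \alpha_{i,j}$ for the letter carrying $a_{i,j}$, and the extraction of diagonal entries are exactly the details the paper leaves implicit. Your observation that the torus factor is really $t^{w_0}$ (which, with the paper's reversed labeling $t = \diag(t_{n+1},\ldots,t_1)$, yields the clean component formula $\gamma_i(x) = t_i\prod_{k>i}a_{i,k}/\prod_{k<i}a_{k,i}$, consistent with the $GL_4$ example of Section \ref{sec:combcrystal}) correctly resolves the one notational subtlety in the statement.
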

\begin{proof}
This just follows from Proposition \ref{prop:gamma}.
\end{proof}

We omit the proof of the following, which can be deduced from \cite{BZ}, or through a straightforward but lengthy calculation.  See Section \ref{sec:combcrystal} for an example.
\begin{prop}
Let $x = utw_0u' \in X$.  Then
$$
\f(x) = \sum_{1 \leq i < j \leq n+1} a_{i,j} + \sum_{1 \leq i < j \leq n+1} \frac{t_{j}}{t_{j-1}a_{i,j}} \frac{a_{i-1,j-1}}{a_{i-1,j}} \cdots \frac{a_{1,j-1}}{a_{1,j}}.
$$
\end{prop}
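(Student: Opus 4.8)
The plan is to unwind the definition $\f(u_1 t \bar w_0 u_2) = \chi(u_1) + \chi(u_2)$. Writing $x = u t \bar w_0 u'$ as in the statement, we have $\f(x) = \chi(u) + \chi(u')$, and the two summands will produce the two displayed sums. The first is immediate: since each $\chi_i$ factors through the abelianization $U \to U/[U,U]$, the map $\chi = \sum_i \chi_i : U \to \C$ is an additive character, so $\chi$ applied to the product $u = x_\i(a_{n,n+1},\ldots,a_{1,2})$ equals the sum of its parameters, namely $\sum_{1 \le i < j \le n+1} a_{i,j}$.

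The real work is in the second summand $\chi(u')$, so first I would pin down $u'$. By construction $u'$ is the unique element of $U$ making $u t \bar w_0 u' \in B_-$, i.e.\ $u' = \pi^+(u t \bar w_0)^{-1}$. Setting $s = \bar w_0^{-1} t \bar w_0 \in T$ and using $t \bar w_0 = \bar w_0 s$ together with the twist-map factorization $u \bar w_0 = \eta(u)\,\tau(u)^{-1}$ coming from Proposition \ref{prop:eta}, one rewrites $u t \bar w_0 u' = \eta(u)\, s\,(s^{-1}\tau(u)^{-1} s)\,u'$. Since $\eta(u)\,s \in B_-$ while the remaining factor lies in $U$, the requirement $x \in B_-$ forces $u' = s^{-1}\tau(u)\, s$. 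Because conjugation by $T$ rescales each root coordinate, $\chi_i(s^{-1} v s) = \alpha_i(s)^{-1}\chi_i(v)$, and $\alpha_i(s)^{-1} = \alpha_{i^*}(t)$ where $i \mapsto i^*$ is the involution induced by $-w_0$. Hence $\chi(u') = \sum_{i \in I}\alpha_{i^*}(t)\,\chi_i(\tau(u))$, reducing everything to the superdiagonal entries $\chi_i(\tau(u))$ of the twist.

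It then remains to compute $\chi_i(\tau(u))$ in the Lusztig coordinates $a_{i,j}$ attached to the staircase reduced word $\i$. The cleanest route is to invoke Berenstein--Zelevinsky's subtraction-free formulas for the twist map in \cite{BZ}, which express the entries of $\eta(u)$, and hence of $\tau(u)$, as ratios of generalized minors that are explicit monomials-times-polynomials in the $a_{i,j}$; reading off the superdiagonal entries and reinstating the torus factors $\alpha_{i^*}(t)$ should produce exactly the telescoping products $\frac{t_j}{t_{j-1} a_{i,j}}\frac{a_{i-1,j-1}}{a_{i-1,j}}\cdots\frac{a_{1,j-1}}{a_{1,j}}$. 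Alternatively one can proceed by induction on $n$, peeling off the final block $x_n x_{n-1}\cdots x_1$ of $\i$ and tracking how an added row and column modify $u'$; the telescoping shape of the answer is precisely what such a recursion yields.

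I expect this last step---carrying out the twist-map computation and matching it term by term to the claimed products---to be the main obstacle, which is exactly why the statement is quoted as a straightforward but lengthy calculation. Checking the general formula against the explicit $GL_4$ computation of Section \ref{sec:combcrystal} gives a useful consistency test before committing to the full induction.
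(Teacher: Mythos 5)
Your setup is correct, and it is precisely the route the paper gestures at: the paper gives no proof of this proposition, saying only that it ``can be deduced from \cite{BZ}, or through a straightforward but lengthy calculation,'' with Section \ref{sec:combcrystal} as an example. Your reductions are all valid: $\f(x)=\chi(u)+\chi(u')$ by the definition of the decoration; $\chi(u)=\sum_{i<j}a_{i,j}$ because $\chi$ is an additive character of $U$; the identification $u'=s^{-1}\tau(u)s$ with $s=\bar w_0^{-1}t\bar w_0$, which follows from $u\bar w_0=\eta(u)\tau(u)^{-1}$ together with $B_-\cap U=\{1\}$; and the resulting expression $\chi(u')=\sum_{i\in I}\alpha_{i^*}(t)\,\chi_i(\tau(u))$. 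Each of these checks out against the paper's $SL_3$ example in Section 2 and the $GL_4$ example in Section \ref{sec:combcrystal}.

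The genuine gap is that the one statement carrying all the content of the proposition is never proved. After your reduction, and using $\alpha_{(j-1)^*}(t)=t_j/t_{j-1}$ in the proposition's conventions, what remains to be shown is the identity
$$
\chi_{j-1}(\tau(u))\;=\;\sum_{i=1}^{j-1}\frac{1}{a_{i,j}}\,\prod_{k=1}^{i-1}\frac{a_{k,j-1}}{a_{k,j}},
\qquad 2\le j\le n+1,
$$
for $u=x_\i(a_{n,n+1},\ldots,a_{1,2})$ with the staircase word $\i$. Your proposal disposes of this by asserting that the generalized-minor formulas of \cite{BZ} ``should produce exactly the telescoping products,'' and that an induction on $n$ would have ``precisely'' the right shape; but no minor identity is actually quoted and evaluated on this word, and the induction is not set up --- indeed the block $x_n x_{n-1}\cdots x_1$ you propose to peel off is the \emph{first} block of $\i$, not the final one, so even the shape of the recursion is not yet in place. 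Everything before this point is formal bookkeeping; this telescoping identity \emph{is} the proposition. To close the gap you would either evaluate the superdiagonal entries of $\tau(u)$ directly (they are ratios of the path-sum polynomials visible in the examples, such as $de+ab+ae$ in Section \ref{sec:combcrystal}), or genuinely run the induction: peel the first block, reduce to the copy of $GL_n$ in the lower-right corner, and verify that the claimed products satisfy the resulting recursion. The consistency check against the $GL_4$ example that you suggest is a sensible sanity test, but it is not a substitute for either computation.
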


Define new variables
$$
z_{i,j} = \frac{t_j}{a_{1,j}a_{2,j}\cdots a_{n+1-i,n+1-i+j}}
$$
where we assume that $z_{n+1,j} = t_j$. Then 
$$
\f(x) = \sum_{1 \leq i < j \leq n+1} \frac{z_{n+2-i,j-i+1}}{z_{n+1-i,j-i}} +\frac{z_{n+1-i,j-i}}{z_{n+2-i,j-i}} = \sum_{1 \leq b \leq a \leq n} \frac{z_{a+1,b+1}}{z_{a,b}} + \frac{z_{a,b}}{z_{a+1,b}}
$$
and weight given by $\gamma_i(x) = \dfrac{\prod_{j=1}^{n+1-i} z_{n+1-i,j}}{\prod_{j=1}^{n-i}z_{n-i,j}}$.

For fixed positive parameters $t_j$ the transformation $a_{i,j} \mapsto z_{i,j}$ is an invertible monomial transformation, so our Whittaker function from Theorem \ref{thm:main} is
\begin{equation}\label{eq:tableau}
\psi_\mu(t) = \int_{\R_{>0}^{n(n+1)/2}} \left(\prod_{i=1}^{n+1} \gamma_i^{\mu_i}\right) \exp\left(-\sum_{1 \leq j \leq i \leq n} \frac{z_{i+1,j+1}}{z_{i,j}} + \frac{z_{i,j}}{z_{i+1,j}}\right) \prod_{1 \leq j \leq i \leq n}\frac{dz_{i,j}}{z_{i,j}}
\end{equation}
where $\mu = (\mu_1,\mu_2,\ldots,\mu_{n+1}) \in \Z^n$ is a character of $T$.  

If we make the substitutions $\la = (-\mu_{n+1},\ldots, -\mu_1)$, and $t_i = x_i$ then we see that one has
$$
\psi_\mu(t_1,t_2,\ldots,t_{n+1}) = \Psi^{n+1}_\la(x_1,x_2,\ldots,x_{n+1})$$ 
where $\Psi^{n}_\la(x)$ are the Whittaker functions studied by Corwin, O'Connell, Sepp\"{a}l\"{a}inen, and Zygouras \cite{COSZ}, and O'Connell, Sepp\"{a}l\"{a}inen, and Zygouras \cite[(2.11)]{OSZ}.  Essentially the same formula was studied by Givental \cite{Giv}, and Gerasimov, Kharchev, Lebedev, and Oblezin \cite{GKLO}.

The parameters $z_{i,j}$ can be put into a Gelfand-Tsetlin pattern of the form
$$
\begin{array}{ccccccccc} t_5 & &t_4 && t_3 &&t_2 &&t_1  \\
&z_{4,4} && z_{4,3} && z_{4,2} && z_{4,1}& \\
&& z_{3,3} && z_{3,2} && z_{3,1} && \\
&&&z_{2,2} && z_{2,1} &&&\\
&&&&z_{1,1} &&&&
 \end{array}
$$

Equation \eqref{eq:tableau} should thus be compared to the formula
$$
s_\lambda(x_1,x_2,\ldots,x_n) = \sum_T x^{{\rm weight}(T)}
$$
for a Schur function as a generating function of semistandard Young tableaux.  In this analogy, the summation is replaced by an integral, the shape $\lambda$ is replaced by the highest weight $t$ of a geometric crystal, the variables $x_i$ are replaced by the parameter $\mu$, and finally the condition that a tableau be semistandard is replaced by $e^{-f(x)}$ where the ``potential'' $f(x)$ discourages, rather than forbids, certain inequalities between the variables.

%The parameters $z_{i,j}$ can be put into a Gelfand-Tsetlin pattern of the form
%$$
%\begin{array}{ccccccccc} t_5 & &t_4 && t_3 &&t_2 &&t_1  \\
%&z'_{1,5} && z'_{1,4} && z'_{1,3} && z'_{1,2}& \\
%&& z'_{2,5} && z'_{2,4} && z'_{2,3} && \\
%&&&z'_{3,5} && z'_{3,4} &&&\\
%&&&&z'_{4,5} &&&&
% \end{array}
%$$
\subsection{Identities}
These Whittaker functions satisfy many integral identities reminiscient of Schur function identities.  The following analogue of the Cauchy identity is \cite[(3.21)]{COSZ}, and was first proved by Stade \cite{Sta}.
\begin{theorem}
Suppose $s > 0$ and $\lambda, \nu \in \C^n$, where $\Re(\la_i + \nu_j) > 0$ for all $i,j$.  Then
$$
\int_{\R_{>0}^n} e^{-s/x_n} \Psi_\nu^n(x) \Psi_\la^n(x) \prod_{i=1}^n \frac{dx_i}{x_i} = s^{-\sum_{i=1}^n(\nu_i+\la_i)} \prod_{i,j} \Gamma(\nu_i+\la_j).
$$
\end{theorem}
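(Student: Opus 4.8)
The plan is to prove the Cauchy-type identity by recognizing the left-hand integral as a matrix coefficient that decouples into recursively defined pieces, exploiting the Gelfand-Tsetlin (GT) structure of the Whittaker functions $\Psi^n_\la$ given in \eqref{eq:tableau}. First I would write both $\Psi^n_\nu(x)$ and $\Psi^n_\la(x)$ as integrals over their respective GT patterns with top rows $(x_1,\dots,x_n)$, so that the full left-hand side becomes a single integral over the two patterns together with the outer variables $x_1,\dots,x_n$ and the extra weight $e^{-s/x_n}$. The key structural observation is that in the GT description the variables $x_1,\dots,x_n$ appear only in the top row of each pattern, coupled to the next row down through the potential $f$; this is precisely the setup that lets one integrate out the $x_i$ one layer at a time.

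The main technical engine will be the \emph{Archimedean Mellin--Barnes / Gamma recursion}: the elementary integral
\begin{equation*}
\int_0^\infty y^{s-1} \exp\!\left(-\frac{A}{y} - By\right)\frac{dy}{y} = 2\,(A/B)^{s/2} K_s\bigl(2\sqrt{AB}\bigr),
\end{equation*}
or, in the form more directly adapted to the telescoping structure here, the beta-like identity
\begin{equation*}
\int_0^\infty y^{a} \exp\!\left(-\frac{c}{y}\right)\frac{dy}{y} \;\text{-type reductions that produce Gamma factors } \Gamma(\nu_i+\la_j).
\end{equation*}
Concretely, I expect that integrating over the top row $x_1,\dots,x_n$ against the factor $\prod_i \gamma_i^{\nu_i}$ (from one pattern) times $\prod_i \gamma_i^{\la_i}$ (from the other) times the coupling exponentials produces, after a change of variables, a product of $n$ one-dimensional integrals each yielding a single $\Gamma(\nu_i+\la_j)$, while simultaneously fusing the two remaining $(n-1)$-row patterns into the integrand of a lower-rank identity. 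The role of $e^{-s/x_n}$ is to supply the ``boundary'' term that makes the top-layer integral convergent and produces the $s^{-\sum(\nu_i+\la_i)}$ prefactor by homogeneity: rescaling all GT entries by $s$ rescales the integrand in a controlled way.

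The natural way to organize this is an induction on $n$. For the base case $n=1$ the patterns are trivial and the identity reduces to the single Gamma integral above. For the inductive step I would peel off the outermost integration, show it contributes $s^{-\sum(\nu_i+\la_i)}\prod_j \Gamma(\nu_j + \la_{\sigma(j)})$ for the appropriate pairing, and recognize the remainder as the rank-$(n-1)$ instance applied to the interlaced rows. One must track the convergence hypothesis $\Re(\nu_i+\la_j)>0$ carefully so that each Gamma integral is defined; this is exactly where the condition in the statement is used. Since we are working formally modulo convergence (as flagged after Proposition \ref{prop:convergence}), I would invoke \cite{Rie2,Chh} for the analytic justification and concentrate on the algebraic identity.

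The hard part will be making the decoupling of the top layer rigorous: the GT coupling terms $z_{i+1,j+1}/z_{i,j} + z_{i,j}/z_{i+1,j}$ do not obviously separate into independent one-dimensional integrals, so one needs a clever triangular change of variables (essentially the geometric-RSK / $q$-deformed Whittaker recursion used in \cite{COSZ,GKLO}) that diagonalizes the top-row interaction into $n$ decoupled Gamma integrals. Identifying that change of variables explicitly, and verifying its Jacobian is compatible with the canonical $d\log$ form $\omega_t$, is the crux of the argument; everything else is bookkeeping of exponents. Given the appearance of the clean product $\prod_{i,j}\Gamma(\nu_i+\la_j)$ over \emph{all} pairs $(i,j)$ rather than a single permutation, I would expect the correct reduction to involve a Mellin transform in the outer variables that symmetrizes over the pairing, consistent with the branching structure of the Whittaker functions; alternatively, one follows Stade's original computation \cite{Sta} directly.
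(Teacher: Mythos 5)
A preliminary point of comparison: the paper does not prove this theorem at all --- it is quoted from \cite[(3.21)]{COSZ} and attributed to Stade \cite{Sta} --- so there is no internal argument for your proposal to match; any proof has to be self-supplied or genuinely deferred to those references.

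Taken on its own terms, your proposal has a concrete gap in the inductive step. You claim that integrating out the top row $x_1,\dots,x_n$ yields $\prod_j \Gamma(\nu_j+\la_{\sigma(j)})$ ``for the appropriate pairing'' together with a rank-$(n-1)$ instance of the same identity. This cannot be right, by counting Gamma factors: the rank-$n$ identity produces $n^2$ of them and the rank-$(n-1)$ identity produces $(n-1)^2$, so the peeled layer must contribute $2n-1$ factors in which the $\nu$'s and $\la$'s are crossed, not $n$ factors along a permutation. Worse, what remains after integrating out the top row is not a rank-$(n-1)$ instance of the statement: the two lower patterns are coupled only through the (now integrated) top row, and the exponents appearing in the residual integral are GT-row variables (or, in the Mellin picture, continuous Mellin--Barnes parameters), not a pair of fixed vectors in $\C^{n-1}$ to which an induction hypothesis applies verbatim. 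The two known ways to repair this are exactly the two devices you defer: (a) Stade's argument, which inserts the Mellin--Barnes recursion expressing $\Psi^n$ through $\Psi^{n-1}$ and then applies Barnes' first lemma repeatedly --- this is precisely where the crossed product $\prod_{i,j}\Gamma(\nu_i+\la_j)$ arises; or (b) the geometric-RSK argument of \cite{COSZ}, where the pair of patterns with common top row is pulled back under gRSK to an $n\times n$ matrix of independent entries, the $d\log$ volume form is preserved, and the integral factors into $n^2$ independent one-dimensional integrals $\int_0^\infty w^{\nu_i+\la_j}e^{-w}\,\frac{dw}{w}=\Gamma(\nu_i+\la_j)$. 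Either of these \emph{is} the theorem; your sketch correctly identifies the decoupling change of variables as the crux but supplies neither it nor the Barnes-lemma alternative, while the scaffolding you do supply (the permutation-paired induction) is the part that fails. The only step that goes through as stated is extracting the prefactor $s^{-\sum_i(\nu_i+\la_i)}$ via the rescaling $x_i \mapsto s x_i$ and the homogeneity of $\Psi^n_\la$ visible in \eqref{eq:tableau}.
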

Note that for $\Psi^n_\la(x)$, the `shape' is $x$ and so the above formula is indeed an analogue of a summation over the shape.  Here the Gamma-function takes the place of the familiar factor $\frac{1}{1-\la_i \nu_i}$ in the Schur function identity.  In \cite{OSZ} the Whittaker analogue of the identity $\sum_\la s_\la(x) = \prod_i 1/(1-x_i) \prod_{i<j} 1/(1-x_ix_j)$ can also be found.

Define the ``Baxter operator'' \cite{OSZ, GLO} for $\la \in \C$
$$
Q_{\la}^n(x,y) = \left(\prod_{i=1}^n \frac{y_i}{x_i}\right)^\la \exp\left( -\sum_{i=1}^n \frac{y_i}{x_i} - \sum_{i=1}^{n-1} \frac{x_{i+1}}{y_i}\right).
$$
The following analogue of the Pieri rule is \cite[Corollary 2.2]{GLO}.
\begin{theorem}
For suitable $\gamma \in \C$ and $\la \in \C^n$, we have
$$
\int_{\R_{>0}^n} Q_\gamma(x,y) \Psi^n_\la(x) \prod_{i=1}^n \frac{dx_i}{x_i}= \prod_{i=1}^n \Gamma(\gamma-\la_i)  \Psi^n_\la(y).
$$
\end{theorem}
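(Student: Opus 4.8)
The plan is to prove that $\Psi^n_\la$ is an eigenfunction of the integral operator with kernel $Q^n_\gamma(x,y)$, working directly from the explicit Gelfand-Tsetlin representation \eqref{eq:tableau}. First I would substitute that representation for $\Psi^n_\la(x)$ into the left-hand side, so that the integrand becomes the product of the Baxter kernel $Q^n_\gamma(x,y)$ with the monomial weight and the exponential of the superpotential appearing in \eqref{eq:tableau}, taken over a Gelfand-Tsetlin pattern whose top row is $x$. The key structural observation is that the kernel $Q^n_\gamma(x,y) = (\prod_i y_i/x_i)^\gamma \exp(-\sum_i y_i/x_i - \sum_i x_{i+1}/y_i)$ has exactly the nearest-row ratio shape of one layer of \eqref{eq:tableau}; appending it places $y$ as a new top row of size $n$ sitting above the (now internal) row $x$, so the combined expression is again a Gelfand-Tsetlin-type integral, with one extra size-$n$ layer to be integrated out.

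Second, I would integrate out the $x$-variables. The hoped-for mechanism is that, after a monomial change of variables matching the exponents of the superpotential, the extra layer telescopes into $n$ one-dimensional integrals each of the Gamma type
$$
\int_0^\infty e^{-u}\, u^{s}\,\frac{du}{u} = \Gamma(s), \qquad s = \gamma - \la_i,
$$
producing the prefactor $\prod_{i=1}^n \Gamma(\gamma-\la_i)$, while the residual integral over the lower pattern, now carrying $y$ as its top row, reassembles precisely into \eqref{eq:tableau} for $\Psi^n_\la(y)$. Tracking the $\gamma$- and $\la$-exponents through the substitution is what simultaneously produces the Gamma factors (which carry $\gamma - \la_i$) and restores the original spectral parameter $\la$ in the weight of $\Psi^n_\la(y)$.

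The main obstacle is precisely this decoupling. Each internal variable $x_i$ is coupled both upward to $y_{i-1}, y_i$ through the Baxter layer and downward to the next row of the pattern through the superpotential, so the $n$ Gamma integrals do not separate naively. The honest way to organize the computation is an induction on $n$ (equivalently, on the rows of the pattern), removing one integration variable at a time via a single Gamma integral and checking at each stage that the remaining integrand retains the Gelfand-Tsetlin form with the spectral parameter unchanged. This bookkeeping of the monomial Jacobians (so that each $dx_i/x_i$ is preserved) and of the ratio terms is the content of \cite[Corollary 2.2]{GLO}.

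As a conceptual alternative I would note the spectral route: since Theorem \ref{thm:main} already exhibits $\Psi^n_\la$ as a solution of the quantum Toda lattice, one could instead prove that $Q^n_\gamma$ commutes with the Toda Hamiltonians (the Baxter $TQ$-relation), conclude that the function $y \mapsto \int Q^n_\gamma(x,y)\Psi^n_\la(x)\prod dx_i/x_i$ is again a Whittaker function with the same infinitesimal character, hence a scalar multiple of $\Psi^n_\la$ by a uniqueness argument, and finally pin down the scalar $\prod_i \Gamma(\gamma-\la_i)$ from the rank-one case together with the asymptotics of both sides as $y$ degenerates. The obstacles there are establishing the commutation relation at the level of kernels and proving the requisite uniqueness statement for Whittaker functions, neither of which is developed in this paper; so I would favor the direct computation above.
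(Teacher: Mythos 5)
Two things need saying here, one about the paper and one about your argument.

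First, there is no internal proof to compare against: Section \ref{sec:schur} is expository, and this theorem is quoted directly from Gerasimov--Lebedev--Oblezin \cite[Corollary 2.2]{GLO} (just as the Cauchy-type identity before it is quoted from \cite{COSZ} and Stade \cite{Sta}). So your proposal has to stand on its own, and as written it does not.

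The genuine gap is the step you call the ``hoped-for mechanism.'' After you append the Baxter layer to the Gelfand--Tsetlin integral \eqref{eq:tableau} for $\Psi^n_\la(x)$, the $x$-integration does \emph{not} telescope into Gamma integrals: each variable $x_i$ with $i \geq 2$ is coupled two-sidedly, appearing both in terms of the form $A_i/x_i$ (namely $y_i/x_i$ from the kernel and $z_{n-1,i}/x_i$ from the superpotential coupling to the row below) and in terms of the form $B_i x_i$ (namely $x_i/y_{i-1}$ and $x_i/z_{n-1,i-1}$). The resulting one-dimensional integral $\int_0^\infty x^s e^{-A/x - Bx}\,dx/x = 2(A/B)^{s/2}K_s(2\sqrt{AB})$ is a Macdonald--Bessel function, not $\Gamma(s)$; and while $x_1$ is coupled only one-sidedly and does give a Gamma integral, performing it produces an algebraic factor of the form $(y_1+z_{n-1,1})^{s}$ that destroys the Gelfand--Tsetlin shape of what remains. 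So no ordering of the variables peels off Gamma factors one at a time. The mechanism that actually works, and is what \cite{GLO} does, is structurally different: one writes $\Psi^n_\la$ via the recursion $\Psi^n_{(\la_1,\ldots,\la_n)} = R_{\la_n}\Psi^{n-1}_{(\la_1,\ldots,\la_{n-1})}$, where $R_{\la}$ is the integral operator adding a new top row to the pattern, and proves the kernel-level intertwining relation $Q^n_\gamma \circ R_{\la} = \Gamma(\gamma-\la)\, R_{\la}\circ Q^{n-1}_\gamma$, in which a single decoupled Gamma integral appears only after composing the two kernels and changing variables; induction on $n$ with the rank-one base case $\int_0^\infty (y/x)^\gamma e^{-y/x}x^{\la}\,dx/x = \Gamma(\gamma-\la)\,y^{\la}$ then yields the full product $\prod_i\Gamma(\gamma-\la_i)$. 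You invoke ``induction on $n$'' but never formulate this intertwining lemma, and your deferral of the ``bookkeeping'' to \cite[Corollary 2.2]{GLO} is circular, since that corollary is precisely the statement being proved. Your spectral alternative is a legitimate second route, but as you yourself note, both the commutation of the Baxter operator with the Toda lattice and the uniqueness statement for Whittaker functions would have to be established from scratch; neither is available in this paper.
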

This is the an analogue of the generating function of Pieri rules over all homogeneous symmetric functions.  In this case, the product $\prod_{i=1}^n \Gamma(\gamma-\la_i)$ takes the place of a generating function $\sum_{k} h_k t^k$ of homogeneous symmetric functions.  The Baxter operator is thus the geometric analogue of adding a horizontal strip to a Young tableaux.

\begin{remark}
Just as the Cauchy and Pieri identities for Schur functions, and many other properties, follow from the Robinson-Schensted-Knuth algorithm, Corwin, O'Connell, Sepp\"{a}l\"{a}inen, and Zygouras \cite{COSZ} study Whittaker functions using the geometric RSK algorithm of Kirillov \cite{Kir} and Noumi-Yamada \cite{NY}.
\end{remark}

\begin{remark}
Chhaibi \cite{Chh} establishes many interesting properties of the functions \eqref{eq:integral}, including a geometric analogue of the Littlewood-Richardson rule.
\end{remark}

\section{Mirror symmetry for flag varieties}\label{sec:mirror}
The formula \eqref{eq:tableau} for the Whittaker function was discovered by Givental \cite{Giv} in the context of mirror symmetry for flag varieties.  From this point of view, the family $\hw: \X \to T$ equipped with volume forms $\omega_t$ and decoaration/superpotential $\f: \X \to \C$ is a ``mirror family'' to the Langlands dual flag variety $G^\vee/B^\vee$.  As we have remarked, the fibers $X_t \subset \X$ are themselves the complement in $G/B_-$ of an anticanonical divisor, and so can be thought of as open Calabi-Yau varieties.

There are no proofs in this section; we merely hope to connect the previous discussion to the literature on quantum Schubert calculus and mirror symmetry.

\subsection{Toda lattice}\label{sec:classicaltoda}
The Toda lattice of $G$ is the Hamiltonian integrable system on $T$ with Hamiltonian 
$$
H(t,h^*) = \frac{1}{2}(h^*,h^*)  - \sum_{i \in I} \alpha_i(t)
$$
where $(t,h^*) \in T \times \h^*$ is identified with the cotangent space $T^*(T)$.  For example, for $G = SL_n$ we have 
$$
H = \frac{1}{2} \sum_{i=1}^n p_i^2 - \sum_{i=1}^{n-1} e^{x_{i+1}-x_i}
$$
where to be compatible with the classical defintions, here we use coordinates $x_i$ on $\h$, rather than $T$.  This models $n$ particles traveling on the line with position $x_i$, momentum $p_i$ (with 0 total momentum).  The energy of the system is given by the kinetic energy $\frac{1}{2} \sum_{i=1}^n p_i^2$ and a potential where only adjacent particles interact.   

Let $e = \sum_{i\in I} f_i^\vee \in \g^\vee$ be a principal nilpotent.  Also identify $\h^*$ with $\h^\vee$.  We consider the map 
$$
(t,h^*) \longmapsto e + h^* + \sum_{i \in I} \alpha_i(t) e_i^\vee \in e + \h^\vee + \bigoplus_{i\in I} \C \cdot e_i^\vee \subset \g^\vee 
$$
sending the cotangent space $T^*(T)$ to a tridiagonal space of matrices.  We denote by $\A^\vee \subset \g^\vee$ the image of this map.  Kostant \cite{Kos:Toda} shows that the Toda Hamiltonian is essentially the Killing form $(.,.)^\vee$ of $\g^\vee$ restricted to $\A^\vee$, and the other integrals of motion are given by the map $\A^\vee \to \h^\vee/W$ arising from $\O(\h^\vee)^W \simeq \O(\h^\vee/W) \simeq \O(\g^\vee)^G \hookrightarrow \O(\g^\vee) \rightarrow \O(\A^\vee)$, where the first map is Chevalley's restriction theorem.  The images of the generators of $\Sym(\h)^W \simeq \O(\h^\vee/W)$ in $\O(\A^\vee)$ are the integrals of motion of the Toda lattice.

Under the isomorphism $\A^\vee \simeq T^*(T)$, we get an embedding $\kappa: \Sym(\h)^W \hookrightarrow \O(T^*(T))$, whose image consists of Poisson commuting elements.  This embedding is the quasi-classical limit of the quantum Toda lattice $\kappa_q: \Sym(\h)^W \hookrightarrow \D(T)$ of Theorem \ref{thm:Toda2}.  The quasi-classical limit is obtained by $\partial/\partial h \mapsto h^\vee$, for $h \in \h$.

For example, for $G = SL_n$, we have
\begin{equation}\label{eq:toda}
\A^\vee = \left\{ \left(\begin{array}{ccccc}
p_1&e^{h_2-h_1}&&\vphantom{\ddots} \\
1&p_2&e^{h_3-h_2}&\vphantom{\ddots} \\
&1&p_3&\ddots \\
&&\ddots&\ddots&e^{h_n-h_{n-1}} \\
&&&1&p_n
\end{array}\right) \right \}
\end{equation}
where $\{p_i\}$ is the basis of $\h^\vee$ dual to $\{h_i\}$ (cf. \eqref{eq:jacobi}).  The integrals of motion of the Toda lattice are the coefficients of the characteristic polynomial of this matrix.

\subsection{Cohomology of flag varieties}
We use the same notation as in previous sections, with the caution that objects for $G$ previously are now associated to the Langlands dual $G^\vee$.  

By Borel's Theorem the cohomology and equivariant cohomology of a flag variety have the ring presentations
$$
H^*(G/B, \C) \simeq \Sym(\h^*)/\ip{\Sym(\h^*)^W_+} \qquad \text{and} \qquad H^*_G(G/B, \C) \simeq \Sym(\h^*).
$$
where $\ip{\Sym(\h^*)^W_+}$ denotes the ideal generated by the positive degree elements of the invariants $W$-invariants $\Sym(\h^*)^W$. The Schubert cell decomposition $\{BwB/B \mid w \in W\}$ of $G/B$ gives a Schubert basis
$$
H^*(G/B, \C) \simeq \oplus_{w \in W} \C \cdot \sigma^w \qquad \text{and} \qquad H^*_G(G/B, \C) \simeq \oplus_{w \in W} \Sym(\h^*)^W \cdot \sigma^w_G,
$$
where $H^*_G(\pt,\C) \simeq \Sym(\h^*)^W$.  From our point of view, it is instructive to think of $\Sym(\h^*)$ as functions on $\h$: the $H^*_G(\pt,\C)$-module structure on $H^*_G(G/B, \C)$ comes from the projection $p:\h \to \h/W:= \Spec(\Sym(\h^*)^W)$.  Then $H^*(G/B, \C)$ is identified with functions on the scheme theoretic fiber of $p^{-1}(0)$.  The Schubert classes are certain distinguished functions on these spaces.

\subsection{Quantum cohomology of flag varieties}
The quantum cohomology rings $QH^*(G/B,\C)$ are defined using Gromov-Witten invariants, arising from the enumeration of rational curves in the flag variety.  We will not present the details here, but only some formal properties.  We have vector space isomorphisms
$$
QH^*(G/B,\C) \simeq \O(T^\vee) \otimes H^*(G/B,\C) \qquad \text{and} \qquad QH^*_G(G/B,\C) \simeq \O(T^\vee) \otimes H^*_G(G/B,\C)
$$
where (simple coroot) coordinates on $T^\vee$, denoted $\{q_i \mid i \in I\}$ are called quantum parameters, so that $\O(T^\vee) \simeq \C[q_i^{\pm 1}\mid i \in I]$.  (Often, quantum cohomology rings are defined without inverting the quantum parameters $q_i$.)  It is convenient, and part of the quantum cohomology setup, to identify $\h^*$ with the Lie algebra, or tangent space, of $T^\vee$, so that $ \O(T^\vee) \otimes \Sym(\h^*) \simeq \O(T^*T^\vee)$, the coordinate ring of the cotangent bundle of $T^\vee$.  We also have quantum Schubert bases
$$
QH^*(G/B,\C) \simeq \oplus_{w \in W} \O(T^\vee)\cdot \sigma_q^w \qquad \text{and} \qquad QH^*_G(G/B,\C) \simeq \oplus_{w \in W} \O(T^\vee)  \otimes \Sym(\h^*)^W \cdot \sigma_{q,G}^w
$$

%$$
% \left(\begin{array}{ccccc}
%x_1&-1&&\vphantom{\ddots} \\
%q_1&x_2&-1&\vphantom{\ddots} \\
%&q_2&x_3&\ddots \\
%&&\ddots&\ddots&-1 \\
%&&&q_{n-1}&x_n
%\end{array}\right)
%$$

Kim \cite{Kim}, following work of Givental-Kim \cite{GK} in type $A$, relate the ring structure of $QH^*(G/B,\C)$ with Toda lattices.  
\begin{theorem}[\cite{Kim}]\label{thm:Kim}
We have ring isomorphisms
$$QH^*(G/B,\C)\simeq \O(\A \times_{\h/W} \{0\})  \qquad \text{and} \qquad QH^*_G(G/B,\C) \simeq \O(\A)$$
where the $H^*_G(\pt,\C)$-module structure on $QH^*_G(G/B,\C)$ is given by the map $\A \to \h/W$.
\end{theorem}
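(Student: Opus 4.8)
The plan is to follow Kim \cite{Kim} (after Givental--Kim \cite{GK} in type $A$): identify the equivariant quantum cohomology as an algebra over $H^*_G(\pt,\C) \simeq \Sym(\h^*)^W = \O(\h/W)$, and match this structure with the integrals of motion of the Toda lattice. First I would reduce to the equivariant statement. Non-equivariant quantum cohomology is recovered from the equivariant one by setting the equivariant parameters to zero while retaining the quantum parameters, i.e. by base change along $\{0\} \hookrightarrow \h/W$; since the $\O(\h/W)$-module structure on $QH^*_G(G/B,\C)$ is, by hypothesis, the one induced by $\A \to \h/W$, applying $-\otimes_{\O(\h/W)}\C_0$ turns the second isomorphism into $QH^*(G/B,\C) \simeq \O(\A \times_{\h/W}\{0\})$. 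So it suffices to prove $QH^*_G(G/B,\C) \simeq \O(\A)$ compatibly with the two maps to $\h/W$.

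The underlying ring isomorphism is the easy part. Because $H^*(G/B,\C)$ is generated in degree $2$ by the Schubert divisors, and the quantum product is a filtered deformation of the classical one, the divisor classes remain algebraically independent generators of $QH^*_G(G/B,\C)$ over $\O(T^\vee)$; thus $QH^*_G(G/B,\C) \simeq \O(T^\vee) \otimes \Sym(\h^*)$ as $\O(T^\vee)$-algebras, sending $h^* \in \h^*$ to the corresponding equivariant divisor class and $q_i \mapsto q_i$. On the other side, $\A$ is the locus of Lax matrices $L = e + h^* + \sum_i q_i e_i$ (with $h^* \in \h^*$, $q \in T^\vee$), so that $\A \simeq T^\vee \times \h^*$ and $\O(\A) \simeq \O(T^\vee) \otimes \Sym(\h^*)$ as well. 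The content of the theorem is therefore entirely in the $\O(\h/W)$-algebra structures: one must show that these two copies of $\O(T^\vee) \otimes \Sym(\h^*)$ carry the \emph{same} map to $\O(\h/W)$.

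Concretely, the map $\A \to \h/W$ sends a point to the integrals of motion $P_1, \ldots, P_r$ — the restrictions to $\A$ of the fundamental $G$-invariant functions on $\g$ (Chevalley/Kostant \cite{Kos:Toda}) — so that on $\O(\A)$ the equivariant parameter $\bar p_k \in \Sym(\h^*)^W$ pulls back to $P_k(h^*, q)$. I must match this with the equivariant parameter on the quantum side, i.e. prove that in $QH^*_G(G/B,\C)$ one has the relation $P_k(\text{divisors}, q) = \bar p_k$ for each fundamental invariant. At $q = 0$ this is precisely the Borel presentation together with Kostant's identification of the slice $e + \h^*$ with $\Sym(\h^*)$ and the Chevalley map, so the classical limit is automatic; granting the relation for all $q$, the $\O(T^\vee)$-algebra isomorphism above upgrades to an $\O(\h/W)$-algebra isomorphism (equivalently, it becomes a surjection of free $R$-modules of equal rank $|W|$, where $R = \O(T^\vee)\otimes\Sym(\h^*)^W$, hence an isomorphism).

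The heart of the proof, and the main obstacle, is the relation $P_k(\text{divisors}, q) = \bar p_k$. As the ring is generated by divisors, it is controlled by quantum multiplication by divisor classes, i.e. by the quantum Chevalley formula (Peterson; Fulton--Woodward), which encodes the relevant genus-zero three-point Gromov--Witten invariants; one must then assemble these products and recognize the resulting deformed relations as the conserved Hamiltonians of the Toda lattice. In the language of this paper, this is exactly the statement that the quantum $D$-module of $G/B$ is the quantum Toda lattice, with $QH^*_G(G/B,\C)$ the coordinate ring of its spectral variety $\A$. The enumerative geometry enters only here; by contrast the integrable-systems input — that the $P_k$ realize $\A$ as a degree-$|W|$ cover of $T^\vee \times \h/W$ — is Kostant's theorem and is comparatively formal.
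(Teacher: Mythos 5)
The paper itself does not prove Theorem \ref{thm:Kim}: it is quoted from Kim \cite{Kim}, and the surrounding section announces that it contains no proofs. What the paper does record, in the remark immediately following the theorem, is the architecture of Kim's argument --- heavy use of formal properties of quantum cohomology, plus a single geometric input, namely that the only rational curves of degree $h_{\alpha_i}$ in $G/B$ are the fibers of $G/B \to G/P_i$, exactly one through each point. Measured against that, your reductions are sound: recovering the non-equivariant statement by base change along $\{0\} \hookrightarrow \h/W$; the graded-deformation (Nakayama-type) argument identifying $QH^*_G(G/B,\C) \simeq \O(T^\vee) \otimes \Sym(\h^*)$ as $\O(T^\vee)$-algebras via divisor classes; and the observation that the entire content is the family of relations $P_k(\text{divisors},q) = \bar p_k$, with Kostant \cite{Kos:Toda} supplying the integrable-systems side.

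The genuine gap is that you never establish those relations, and the route you gesture at cannot be completed as described. ``Assemble the quantum Chevalley products and recognize the conserved Hamiltonians'' is a restatement of the theorem, not an argument; and your fallback --- that this ``is exactly the statement that the quantum $D$-module of $G/B$ is the quantum Toda lattice'' --- replaces the theorem by an equivalent unproven theorem (the other result of \cite{Kim} quoted in this same section). Note also that the quantum Chevalley formula for $G/B$ involves terms $q^{\alpha^\vee}$ for \emph{all} positive roots $\alpha$, so your plan requires Gromov--Witten input in all of those degrees, followed by a formidable and unexecuted symbolic verification that the resulting relations are the Toda integrals of motion. The mechanism that makes the theorem tractable, and the one Kim actually uses, is rigidity: general Gromov--Witten theory (flatness of the quantum connection, equivalently WDVV) produces a commuting family whose symbols are the fundamental $W$-invariants; only the quadratic member needs geometric computation, and for that the degree-$h_{\alpha_i}$ curve fact above suffices; then the \emph{uniqueness} assertion of Theorem \ref{thm:Toda} forces the whole family to coincide with the Toda lattice, the ring statement following in the quasi-classical limit. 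Your proposal is missing exactly this uniqueness step, which is what allows one to avoid computing any relation beyond the quadratic one.
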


The space $\A \times_{\h/W} \{0\}$ is the set of tridiagonal matrices where the integrals of motion of the Toda lattice vanish, often called the nilpotent leaf.  Thus the Schubert bases $\{\sigma^w_q\}$ should be thought of as functions on the nilpotent leaf $\A \times_{\h/W} \{0\}$ of the Toda lattice, and $\{\sigma^w_{q,G}\}$ as functions on the Toda lattice $\A$.  For example, for $G = PGL_3$, using \eqref{eq:toda} we would have that $QH^*(G/B,\C)$ is isomorphic to
$$
 \C[p_1,p_2,p_3,q_1^{\pm 1}, q_2^{\pm 1}]/\ip{p_1+p_2+p_3, q_1+q_2-p_1 p_2 - p_1 p_3 - p_2 p_3,p_1 p_2 p_3 - p_3 q_1 - p_1 q_2}
$$
where $q_1 = e^{h_2-h_1}$ and $q_2 = e^{h_3-h_2}$.

%The map $\kappa:\Sym(\h^*)^W \to \O(T^*T^\vee)$ is the {\it Toda} map. 
\begin{remark}
The proof of Theorem \ref{thm:Kim} utilizes heavily the formal properties of quantum cohomology, and remarkably few properties of rational curves in $G/B$.  The crucial fact is that $H_2(G/B,\C) \simeq \h \simeq \oplus_{i \in I} \C \cdot h_{\alpha_i}$ and the only rational curves of degree $h_{\alpha_i}$ in $G/B$ are the fibers of the projection map $G/B \to G/P_i$, where $P_i$ denotes a minimal parabolic subgroup.  In particular, there is exactly one rational curve of degree $h_{\alpha_i}$ through each point of $G/B$.
\end{remark}

\subsection{Mirror conjecture and quantum $D$-module}
The quantum cohomology ring $QH^*(M)$ is only part of the remarkable structure encoded in enumeration of rational curves in a (suitable) space $M$.  There is also a system of commuting differential equations called the quantum $D$-module.  We explain this now for the flag variety.

Just as the equivariant quantum cohomology ring of $G/B$ is a map $T^*T^\vee \to \h/W$, the quantum integrable system of the flag variety is an embedding
$$
\Sym(\h^*)^W \to \D(T^\vee)
$$
with quasi-classical limit given by the map $\Sym(\h^*)^W \to \O(T^*T^\vee)$ arising from the $H^*_G(\pt,\C)$-module structure of $QH^*_G(G/B,\C)$.  Here $\D(T^\vee)$ denotes the (non-commutative) ring of differential operators on $T^\vee$.

\begin{theorem}[\cite{Kim}]
The quantum cohomology $D$-module is given by the quantum Toda lattice of Theorem \ref{thm:Toda} for $G^\vee$.
\end{theorem}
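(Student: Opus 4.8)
The plan is to invoke the uniqueness half of Theorem~\ref{thm:Toda}, applied to $G^\vee$. By definition the quantum cohomology $D$-module is an embedding $\Sym(\h^*)^W \to \D(T^\vee)$ whose quasi-classical limit is the commutative map $\Sym(\h^*)^W \to \O(T^*T^\vee)$ induced by the $H^*_G(\pt,\C)$-module structure on $QH^*_G(G/B,\C)$. By Kim's Theorem~\ref{thm:Kim} this module structure is exactly the map $\A \to \h/W$; that is, under the identification $T^*T^\vee \simeq \A$ it is the classical Toda lattice of $G^\vee$. Hence the principal symbol of the operator attached to $z \in \Sym(\h^*)^W$ is the Toda integral of motion whose top-order part is $z$ itself, so the symbols are the fundamental $W$-invariants in the sense of condition~(3) of Theorem~\ref{thm:Toda}. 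Thus the quantum cohomology $D$-module and the quantum Toda lattice $\kappa_q$ of Theorem~\ref{thm:Toda2} for $G^\vee$ are two quantizations of one and the same classical integrable system, with identical symbols.

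Next I would record that the quantum cohomology $D$-module operators commute, which is condition~(1) of Theorem~\ref{thm:Toda}. This is the flatness of Givental's quantum connection on $T^\vee$, and follows from the associativity and commutativity of the quantum product, i.e.\ from the WDVV equations---the ``deep but general'' Gromov--Witten input mentioned in the introduction. With commutativity and the correct symbols in hand, the uniqueness statement of Theorem~\ref{thm:Toda} for $G^\vee$ reduces the entire theorem to a single normalization: condition~(2), namely identifying the degree-two operator $H_1$ of the quantum $D$-module with the Toda Hamiltonian $H = \tfrac12 \Delta - \sum_{i} \alpha_i(t)$ of $G^\vee$.

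The genuinely geometric step is this last identification. The degree-two operator of the quantum connection is $\tfrac12 \Delta$ plus a correction that is controlled by quantum multiplication by divisor classes, hence by rational curves of degree $h_{\alpha_i}$. Here one uses the rigidity highlighted in the remark following Theorem~\ref{thm:Kim}: the only rational curves of degree $h_{\alpha_i}$ in $G/B$ are the fibers of $G/B \to G/P_i$, with exactly one through each point. This forces the correction to be the undeformed linear sum $\sum_{i} q_i$, with no higher Gromov--Witten contributions to $H_1$. Under the mirror identification of the quantum parameters $q_i$ with the characters $\alpha_i(t)$ (up to the sign distinguishing the quantum product from the Toda potential), this correction is exactly $-\sum_i \alpha_i(t)$, so $H_1 = H$ after the standard $(\rho,\rho)$-normalization of Theorem~\ref{thm:Toda2}.

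Having verified conditions~(1), (2) and~(3) for the quantum $D$-module of $G/B$ regarded as a system on $T^\vee$, the uniqueness clause of Theorem~\ref{thm:Toda} identifies it with $\kappa_q$ for $G^\vee$, proving the theorem. I expect the main obstacle to be the curve-counting computation of the preceding paragraph, and in particular tracking the signs and mirror-map conventions that relate $q_i$ to $\alpha_i(t)$; the rigidity of degree-$h_{\alpha_i}$ curves makes the linear term itself transparent, but care is needed that matching only the symbols, the commutativity, and the single Hamiltonian really pins down the whole hierarchy---which is precisely the work done for us by the uniqueness in Theorem~\ref{thm:Toda}, rather than matching each higher operator $H_2,\dots,H_r$ by hand.
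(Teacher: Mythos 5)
The first thing to note is that the paper contains no proof of this statement for you to be compared against: it is a quotation of Kim's theorem \cite{Kim}, and Section \ref{sec:mirror} opens by declaring that it contains no proofs. So your proposal must be judged as a reconstruction of the argument in the literature, and as such it is essentially faithful to the Givental--Kim strategy: commutativity of the operators from the general structure of Gromov--Witten theory, the symbol condition from the quasi-classical limit, identification of the degree-two operator with $H$ via the rigidity of degree-$h_{\alpha_i}$ curves, and then the uniqueness clause of Theorem \ref{thm:Toda} (applied for $G^\vee$) to capture the whole hierarchy at once. Your closing observation --- that the uniqueness statement is precisely what spares you from matching $H_2,\dots,H_r$ individually --- is the key structural point, and you are right that the curve count behind the Hamiltonian is the only place where the geometry of $G/B$ genuinely enters.

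Two caveats, neither fatal. First, deducing the symbol condition from Theorem \ref{thm:Kim} inverts the logical order of Kim's own development: there the equivariant ring presentation is essentially the quasi-classical shadow of the $D$-module theorem, so the two are established together rather than the ring statement being available as an independent input. Inside this paper, where Theorem \ref{thm:Kim} is cited as a black box, your argument is not circular, but you could avoid even the appearance of circularity: at $q=0$ the module map $\Sym(\h^*)^W \to QH^*_G(G/B,\C) \simeq \O(T^\vee)\otimes \Sym(\h^*)$ reduces to the Borel inclusion, and every $q$-correction strictly lowers the $\Sym(\h^*)$-degree, so the symbol of the operator attached to $z$ is $z$ by grading alone, with no input from Theorem \ref{thm:Kim}. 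Second, the passage from the first-order quantum connection to a second-order operator, and the computation that the divisor quantum products contribute exactly the linear potential $\sum_{k} q_k$ (with the identification $q_k = \alpha_k(t)$ and the correct sign), is asserted rather than carried out; together with the existence of the embedding $\Sym(\h^*)^W \to \D(T^\vee)$ itself --- which the paper's framing of the quantum $D$-module quietly presupposes, but which is part of what Kim actually proves --- this is where all the real work of the theorem lives.
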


Motivated by considerations from singularity theory, Givental \cite{Giv} proposed (in the general setting of a compact symplectic manifold $M$) that the quantum $D$-module had solutions which are stationary phase integrals over a conjectural ``mirror family'' $Y \to T^\vee$:
$$
\psi(t) = \int_{\Gamma_t \subset Y_t} e^{\F_t} \omega_t
$$   
where $Y_t$ is a family of possibly non-compact complex manifolds, $\F_t:Y_t \to \C$ is a family of holomorphic functions called the ``superpotential'', and $\omega_t$ a family of non-vanishing top-dimensional holomorphic forms.  As the family of real (again, possibly non-compact) middle-dimensional cycles varies, one hopefully obtains all the solutions to the quantum $D$-module.  In the setting of equivariant quantum cohomology, one is supposed to be able to produce arbitrary eigenfunctions of the quantum $D$-module.  Givental proved this for $G = GL_n$ in the non-equivariant setting and Joe and Kim \cite{JK} extended this to the equivariant setting.  Rietsch \cite{Rie1} then proposed a conjectural mirror family for arbitrary partial flag varieties, and proves it in the non-equivariant setting for the full flag variety \cite{Rie2}.  Our Theorem \ref{thm:main} proves Rietsch's conjecture in the equivariant setting.

In Givental's mirror conjecture, the critical points of the functions $\F_t$ recover the Lagrangian variety that is the spectrum of the quantum cohomology ring.  Rietsch \cite{Rie1} verified this for $G/B$ in the following

\begin{theorem}\label{thm:critical}
The set of critical points of the superpotential/decoration $f|_{X_t}:X^\vee_t \to \C$, as $t$ varies, is given by the centralizer subvariety
$$
(\X^\vee)^{{\rm critical}} = Z^\vee_e := \{x \in \X^\vee \mid \ad(x)(e) = e\}
$$
where $e = \sum_{i=1}^r f_i \in \g$ is the principal nilpotent element.
%$e = \sum_{i=1}^r (e_i^\vee)^* \in (\g^\vee)^*$ is the principal nilpotent element. %corresponding to our choice of character $\chi: U \to \C$.
\end{theorem}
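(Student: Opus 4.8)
The plan is to compute the locus where the differential of $\f$ vanishes along the fibers $X_t = \hw^{-1}(t)$, and to recognize the resulting system as the centralizer equations $\ad(g)(e)=e$. Throughout I work in the relevant group (the Langlands dual), suppressing the superscript ${}^\vee$, and write $(\,,\,)$ for the Killing form, normalized so that for $\xi\in\n$ one has $\chi(\xi)=(\xi,e)$ with $e=\sum_i f_i$; this is exactly the differential of the additive character $\chi$ used to build the decoration. Since $X_t$ is the fiber of $\hw\colon\X\to T$, its tangent space at $g$ is $\ker(d\hw_g)\subset\b_-$, and this tangent space is spanned by the velocities of variations $g=u_1 t\bar w_0 u_2\mapsto u_1 e^{s\xi_1}\, t\bar w_0\, e^{s\xi_2}u_2$ (with $\xi_1,\xi_2\in\n$) constrained to remain in $B_-$; equivalently by the infinitesimal rational $U$-action of Lemma~\ref{lem:Uaction}, which is generically transitive.

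The key computation is the following. Writing $X=\tfrac{d}{ds}\big|_0(\delta g)\,g^{-1}\in\b_-$ for such a variation, the defining formula $\f(u_1 t\bar w_0 u_2)=\chi(u_1)+\chi(u_2)$ and the fact that $\chi$ is an additive character give
$$
d\f(\delta) = (\xi_1,e)+(\xi_2,e) = \bigl(X,\ \ad(u_1)\,e+\ad(u_1 t\bar w_0)\,e\bigr),
$$
where the second equality uses $\ad$-invariance of $(\,,\,)$ together with $X=\ad(u_1)\xi_1+\ad(u_1 t\bar w_0)\xi_2$ and the vanishing of the Killing form on $\n\times\n$ and on $\n_-\times\n_-$. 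Thus $g$ is critical on $X_{\hw(g)}$ if and only if $\ad(u_1)\bigl(e+\ad(t\bar w_0)e\bigr)$ lies in $(\ker d\hw_g)^{\perp}$. Here I would compute the annihilator explicitly: since $\b_-^{\perp}=\n_-$ and the $r$ normal directions to the fiber inside $\b_-$ are cut out by $d\hw$, one gets $(\ker d\hw_g)^{\perp}=\n_-+\ad(u_1)\h$. Hence criticality is equivalent to
$$
\ad(u_1)\bigl(e+\ad(t\bar w_0)\,e\bigr)\in\n_-+\ad(u_1)\,\h,
$$
equivalently, after applying $\ad(u_1^{-1})$, to $e+\ad(t\bar w_0)e\in\h+\ad(u_1^{-1})\n_-$.

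The hard part is to convert this last condition, which still refers to the coordinates $u_1,t,\bar w_0$, into the coordinate-free centralizer equation $\ad(g)(e)=e$ for $g=u_1 t\bar w_0 u_2\in\X$. I would do this by rewriting $\ad(t\bar w_0)e=\ad(u_1^{-1})\,\ad(g)\,\ad(u_2^{-1})e$ via $u_1 t\bar w_0=gu_2^{-1}$, and unwinding the twist so that the membership above becomes the statement that $\ad(g)e-e$ has vanishing components transverse to $\mathfrak z_{\g}(e)$. This is precisely where Kostant's description of the Toda flow \cite{Kos:Toda} and Rietsch's identification \cite{Rie1} of the critical locus with a cell-intersection of the Peterson variety enter; I expect the cleanest route is to prove the two inclusions separately, showing first that every $g$ with $\ad(g)(e)=e$ satisfies the displayed membership, and then matching the two loci by a dimension count. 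Indeed $\dim Z_e=r=\dim T$, and $Z_e\cap X_t$ is a transverse intersection of $Z_e$ with the $\ell$-dimensional fiber inside the $(r+\ell)$-dimensional $\X$, hence finite, so that as $t$ ranges over $T$ the loci $Z_e\cap X_t$ sweep out exactly the $r$-dimensional variety $Z_e$; this matches the expected nondegenerate critical set fiberwise. The main obstacle is thus not the differential computation but this final algebraic identification, in particular keeping track of the lift $\bar w_0$, the Borel annihilators, and the equivariant dependence on $t$ (the quantum parameters $q_i=\alpha_i(t)$), together with the preliminary verification that $Z_e$ actually lies inside $\X$.
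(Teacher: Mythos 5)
The first thing to note is that the paper does not prove this theorem at all: Section \ref{sec:mirror} opens by declaring that it contains no proofs, and Theorem \ref{thm:critical} is simply quoted from Rietsch \cite{Rie1} (with \cite{Kos:Toda} in the background). So your attempt must be judged on its own terms, and on those terms the part you actually carry out is correct but is the routine part. The computation
$$
d\f(\delta)=(\xi_1,e)+(\xi_2,e)=\bigl(X,\ \ad(u_1)e+\ad(u_1t\bar w_0)e\bigr),\qquad X=\ad(u_1)\xi_1+\ad(u_1t\bar w_0)\xi_2,
$$
is right (the cross terms vanish because the Killing form is zero on $\n\times\n$ and $\n_-\times\n_-$, and $\ad(t\bar w_0)$ exchanges $\n$ and $\n_-$), and so is the annihilator: generically $\ker d\hw_g=\b_-\cap\ad(u_1)(\n\oplus\n_-)$, whence $(\ker d\hw_g)^\perp=\b_-^\perp+\ad(u_1)\bigl((\n\oplus\n_-)^\perp\bigr)=\n_-+\ad(u_1)\h$. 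But everything after the sentence ``The hard part is to convert this last condition'' is a plan, not a proof. The equivalence of
$$
e+\ad(t\bar w_0)e\in\h+\ad(u_1^{-1})\n_-
$$
with the coordinate-free equation $\ad(x)(e)=e$ \emph{is} the theorem: the left-hand condition involves only $(u_1,t)$, while the right-hand one involves the full product $x=u_1t\bar w_0u_2$, and it is moreover the coadjoint action of $G^\vee$ on $\g\simeq(\g^\vee)^*$, a mismatch that your single-Lie-algebra Killing-form setup never reconciles beyond a normalization remark. You defer precisely this step to ``unwinding the twist'' and to \cite{Kos:Toda} and \cite{Rie1} --- the very references in which the theorem is established. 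That is a genuine gap, not a detail.

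The fallback dimension count cannot close the gap either. Your plan is: prove the inclusion $Z^\vee_e\subset(\X^\vee)^{{\rm critical}}$, then conclude equality because both loci are $r$-dimensional. But (i) the inclusion itself is never established, only announced; (ii) equal dimension upgrades an inclusion to an equality only if the larger locus is irreducible (or at least has no component missing the smaller one), and nothing in the proposal controls the component structure of the critical locus --- a priori the fiberwise critical sets could even be positive-dimensional, so the critical locus need not have dimension $r$ at all; (iii) the finiteness and transversality of $Z^\vee_e\cap X_t$ that you invoke are asserted, not proven, and nondegeneracy of critical points of the superpotential is genuinely nontrivial (it is tied to semisimplicity of $QH^*(G/B,\C)$ at the given quantum parameters); note that the paper deduces uniqueness of the positive critical point \emph{from} Theorem \ref{thm:critical} together with \cite{LR} and \cite{Chh}, not the other way around. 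In sum, you have correctly reduced the theorem to an unproved algebraic identity, and the auxiliary argument offered in place of that identity does not work as stated.
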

The action $\ad(x)$ is the coadjoint action of $G^\vee$ on $\g \simeq (\g^\vee)^*$.  The variety $Z^\vee_e$ is related to Kim's presentation of $QH^*(G/B)$ by the map $\eta$.  The map
\begin{equation}\label{eq:adeta}
x \longmapsto \ad(\eta^{-1}(x))^{-1} \cdot e 
\end{equation}
maps $Z^\vee_e$ isomorphically onto $\A \times_{\h/W} \{0\} \simeq \Spec(QH^*(G/B,\C))$.  This map is an important part of Kostant's solution of the Toda lattice \cite{Kos:Toda}: it provides action coordinates for the nilpotent leaf.  In other words, the twist map is a part of the solution of the Toda lattice.  

The closure of the image $Z^\vee_e w_0 B^\vee_- \subset G^\vee/B^\vee_-$ of $Z^\vee_e$ in the flag variety is called the {\it Peterson variety}.  We refer the reader to Rietsch's work \cite{RieTP,Rie1} for further discussion of this.

\subsection{Quantum equals affine and Schubert bases}
Let $\Gr_G = G(\C((t)))/G(\C[[t]])$ denote the affine Grassmannian of $G$.  Here $\C((t))$ is the field of formal Laurent series, and $\C[[t]]$ is the subring of formal power series.  The affine Grassmannian also has a Schubert decomposition and we have a Schubert basis:
$$
H_*(\Gr_G,\C) \simeq \oplus_{x \in W_\af/W} \C \cdot \xi_x
$$
where $W_\af$denotes the affine Weyl group.

The affine Grassmannian is homotopy equivalent to the based loop group $\Omega K$, where $K \subset G$ denotes a maximal compact subgroup.  Thus the group multiplication of $\Omega K$ endows the homology $H_*(\Gr_G,\C)$ with a ring structure (indeed, a Hopf algebra structure).  This homology ring was studied by Ginzburg \cite{Gin} and Peterson \cite{Pet}, who showed:
\begin{theorem}\label{thm:Gin}
We have $H_*(\Gr_G,\C)_{\xi_{t_\la}^{-1}} \simeq \O(Z^\vee_e)$.
\end{theorem}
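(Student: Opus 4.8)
The plan is to realize the statement as a composite of three ring isomorphisms, two of which are precisely the results assembled earlier in this section. First I would invoke Peterson's ``quantum equals affine'' theorem, in the form established by Lam and Shimozono: after inverting the translation classes $\xi_{t_\la}$ the localized homology $H_*(\Gr_G,\C)_{\xi_{t_\la}^{-1}}$ is isomorphic, as a ring, to the quantum cohomology $QH^*(G/B,\C)$. The point is that the based-loop-group multiplication on $\Gr_G$ makes the affine Schubert classes indexed by the coroot lattice grouplike, with $\xi_{t_\la}\cdot\xi_{t_\mu}=\xi_{t_{\la+\mu}}$, so that these classes are exactly the ones that become invertible; under Peterson's map they correspond to the quantum parameters $q_i$, whose inversion is already built into $\O(T^\vee)=\C[q_i^{\pm1}]$ on the quantum side.

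Granting this, the remaining steps are formal bookkeeping. By Theorem~\ref{thm:Kim}, $QH^*(G/B,\C)\simeq\O(\A\times_{\h/W}\{0\})$, the coordinate ring of the nilpotent leaf of the Toda lattice. By Theorem~\ref{thm:critical}, the twist map $\eta$ supplies, through the assignment~\eqref{eq:adeta}, an isomorphism $Z^\vee_e\simeq\A\times_{\h/W}\{0\}$, hence $\O(\A\times_{\h/W}\{0\})\simeq\O(Z^\vee_e)$. Composing the three isomorphisms gives $H_*(\Gr_G,\C)_{\xi_{t_\la}^{-1}}\simeq\O(Z^\vee_e)$. The one genuine check is that the two localizations match on the nose, i.e.\ that inverting the set $\{\xi_{t_\la}\}$ corresponds under Peterson's identification exactly to inverting the $q_i$ already present in the target of Theorem~\ref{thm:Kim}.

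An alternative, and historically the original, route is Ginzburg's geometric Satake argument, which avoids any appeal to Gromov--Witten theory. Here one uses the Tannakian equivalence $\mathrm{Perv}_{G(\C[[t]])}(\Gr_G)\simeq\mathrm{Rep}(G^\vee)$ and lets $H_*(\Gr_G,\C)$ act by convolution on the cohomology of the perverse sheaves. Since convolution corresponds to tensor product, this realizes the homology ring as functions on a closed subscheme of $G^\vee$; tracking the principal grading coming from cohomological degree then identifies this subscheme, after the relevant localization, with the centralizer $Z^\vee_e$ of a regular nilpotent.

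I expect the main obstacle to be the deep input underlying whichever first step one chooses: Peterson's quantum-equals-affine isomorphism packages both the enumerative geometry of rational curves in $G/B$ and the loop-group structure on $\Gr_G$, while Ginzburg's argument rests on the full strength of geometric Satake. By contrast, the two subsequent steps in the first approach, Kim's presentation and the Kostant--Rietsch twist-map identification, are exactly the theorems already proved above, so the proof reduces to citing them and reconciling the localizations.
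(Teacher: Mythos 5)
The first thing to say is that the paper contains no proof of this statement: Theorem \ref{thm:Gin} sits in Section \ref{sec:mirror}, which opens with the explicit disclaimer that there are no proofs in that section, and the result is simply quoted from Ginzburg \cite{Gin} and Peterson \cite{Pet}. So there is no internal argument to match your sketch against; the question is whether your sketch is sound on its own terms. Your second route --- Ginzburg's geometric Satake argument, realizing $H_*(\Gr_G,\C)$ via its convolution action on fiber functors as functions on a subscheme of $G^\vee$ and identifying it with the centralizer of a principal nilpotent --- is precisely the source the paper's citation points to, and is the correct self-contained attribution.

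Your primary route, however, is circular relative to the logical structure of this paper. Here the isomorphism $H_*(\Gr_G,\C)_{\xi_{t_\la}^{-1}} \simeq QH^*(G/B,\C)$ is not an independent input: the paper states that it is the isomorphism \emph{induced by} \eqref{eq:adeta}, i.e.\ it is manufactured by chaining Theorem \ref{thm:Gin} with Kostant's identification $Z^\vee_e \simeq \A \times_{\h/W} \{0\}$ and Kim's presentation (Theorem \ref{thm:Kim}); the content of the Lam--Shimozono theorem quoted afterwards is only that this composite is compatible with Schubert bases. So invoking ``quantum equals affine'' as step one and then composing with Kim and the twist map runs the paper's chain of isomorphisms backwards --- you would be using Theorem \ref{thm:Gin} to prove itself. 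The circularity is repairable: the actual proof in \cite{LS1} constructs the quantum-equals-affine isomorphism independently of geometric Satake and of $\O(Z^\vee_e)$, working inside the affine nilHecke ring via Peterson's subalgebra and a comparison of the affine and quantum Chevalley formulas. If you cite that construction (rather than the theorem as packaged in this paper), your three-step composite, together with the localization-matching check you correctly flag, becomes a legitimate alternative proof; as written, though, its key input is not independent of the conclusion.
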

Here $\xi_{t_\la}$ is a Schubert class labeled by a translation element.  We refer the reader to \cite{LS1,LS2} for further details.  Peterson \cite{Pet} noticed the remarkable fact that the isomorphism $H_*(\Gr_G,\C)_{\xi_{t_\la}^{-1}} \simeq QH^*(G/B,\C)$ induced by \eqref{eq:adeta} is compatible with Schubert bases.  The following result is established by Lam and Shimozono \cite{LS1} following Peterson's work (see also Leung and Li \cite{LL}).

\begin{theorem}
We have a ring isomorphism $H_*(\Gr_G,\C)_{\xi_{t_\la}^{-1}} \to QH^*(G/B,\C)$ sending each affine Schubert class $\xi_x$ to some product $q^d \sigma^w_q$ of a monomial in the quantum parameters, and a quantum Schubert class.
\end{theorem}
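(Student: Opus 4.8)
The ring isomorphism itself is already in hand: Theorem \ref{thm:Gin} gives $H_*(\Gr_G,\C)_{\xi_{t_\la}^{-1}} \simeq \O(Z^\vee_e)$, the map \eqref{eq:adeta} identifies $Z^\vee_e$ with $\A \times_{\h/W} \{0\}$, and Kim's Theorem \ref{thm:Kim} identifies $\O(\A \times_{\h/W} \{0\})$ with $QH^*(G/B,\C)$. Write $\Phi$ for the resulting composite ring isomorphism. The entire content of the theorem is therefore that $\Phi$ carries the affine Schubert basis $\{\xi_x\}$ onto the shifted quantum Schubert basis $\{q^d \sigma^w_q\}$, and this is what I would set out to prove. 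The plan is to realize both rings inside a single algebraic object, Kostant and Kumar's affine nilHecke ring $\mathbb{A}_\af$, following Peterson \cite{Pet}. I would first pass to the equivariant enhancements, working with $H^T_*(\Gr_G)$ over $S = \Sym(\h^*)$ and with $QH^*_G(G/B)$, since the Schubert bases are rigidly characterized equivariantly and one can specialize the parameter to $0$ at the end. The key structural input is Peterson's theorem, established in general by Lam and Shimozono \cite{LS1,LS2}, identifying $H^T_*(\Gr_G)$ with the \emph{Peterson subalgebra} $\mathbb{P} = Z_{\mathbb{A}_\af}(S)$, the centralizer of the scalars, so that the affine Schubert classes $\xi_x$ correspond to a distinguished basis $\{j_x\}$ of $\mathbb{P}$ indexed by the minimal-length representatives $x \in W_\af/W$.

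With both rings described inside $\mathbb{A}_\af$, I would match their Schubert structure constants through the Chevalley formula. On the quantum side the equivariant quantum Chevalley formula (conjectured by Peterson, proved by Fulton--Woodward and by Mihalcea) expands $\sigma^{s_i}_{q,G} \star \sigma^w_{q,G}$ as an explicit $\Sym(\h^*)^W$-linear combination of classes $\sigma^v_{q,G}$ and of $q_{\alpha^\vee}\,\sigma^{v'}_{q,G}$. On the affine side there is the corresponding homology Chevalley (Pieri) formula for $\xi_{s_i}\cdot \xi_x$ in $H^T_*(\Gr_G)$. I would verify that $\Phi$ intertwines these two expansions once translation classes are matched to quantum parameters, namely $\Phi(\xi_{t_\mu}) = q^\mu$; this is exactly why the class $\xi_{t_\la}$ must be inverted, and it is the source of the monomial $q^{d(x)}$ recording the translation part of the coset $x \in W_\af/W$.

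The remaining step is a triangularity and uniqueness argument. The decomposition $W_\af/W \leftrightarrow \{(w,d)\}$ sends each minimal coset representative $x$ to its finite Weyl-group part $w(x)$ and an effective degree $d(x)$. Because the Chevalley (divisor) classes generate each ring after localization, and because each Schubert basis is pinned down up to scalar by the Chevalley recursion together with upper-triangularity with respect to the relevant order (affine Bruhat order on one side, length-plus-degree order on the other), the matching of Chevalley formulas forces $\Phi(\xi_x) = q^{d(x)}\,\sigma^{w(x)}_q$ on the nose rather than merely up to a linear combination. Specializing the equivariant parameter to $0$ then recovers the non-equivariant statement in the form asserted (see also Leung--Li \cite{LL} for a geometric route to the same comparison).

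The main obstacle is Peterson's identification $H^T_*(\Gr_G) \simeq \mathbb{P}$ \emph{together with} the compatibility of the affine Schubert classes with the $\{j_x\}$ basis: this is the genuinely deep ingredient, requiring the full structure theory of the affine nilHecke ring \cite{LS1} (and, in type $A$, the theory of $k$-Schur functions) rather than any soft manipulation of $\Phi$. Everything downstream --- matching the two Chevalley formulas and the triangular bookkeeping of the $q$-shifts --- is comparatively formal once that identification and the quantum Chevalley formula are granted.
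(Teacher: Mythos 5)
The paper offers no proof of this theorem at all: Section \ref{sec:mirror} states explicitly that ``there are no proofs in this section,'' and the result is quoted from Lam--Shimozono \cite{LS1} (following Peterson \cite{Pet}, see also Leung--Li \cite{LL}). Your proposal correctly reconstructs the strategy of that cited proof — realizing $H^T_*(\Gr_G)$ as the Peterson subalgebra of the affine nilHecke ring with $\xi_x \mapsto j_x$, sending translation classes to quantum parameters (which is why $\xi_{t_\la}$ must be inverted), matching the affine homology Chevalley rule against Mihalcea's equivariant quantum Chevalley formula, and using the uniqueness of the basis satisfying that recursion to force $\Phi(\xi_x) = q^{d(x)}\sigma^{w(x)}_q$ before specializing the equivariant parameter to zero — so it is essentially the same argument as the one the paper relies on.
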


It is convenient to think of $H_*(\Gr_G,\C)$ as more closely related to $Z^\vee_e$ and the mirror family, while $QH^*(G/B,\C)$ is more closely related to the nilpotent leaf of the Toda lattice $\A$.  This is because the centralizer subgroup $Z^\vee_e \subset G^\vee$ naturally appears in the geometric Satake correspondence underlying Theorem \ref{thm:Gin}, while the quantum parameters are explicit in the tridiagonal form $\A$.  So in some sense the twist map $\eta$ connects quantum with affine.  

Now, let us define the totally positive part of the centralizer variety $(Z^\vee_e)_{>0}:= Z^\vee_e \cap \X^\vee_{>0}$, where $\X^\vee_{>0} = T^\vee_{>0} \cdot U^\vee_{>0}$.  Surprisingly, we can use Schubert bases (thought of as functions on $Z^\vee_e$) to pick out the totally positive part, linking quantum and affine Schubert calculus back to the positivity discussions in previous sections.  The following result was established by Rietsch and myself \cite{LR}:

\begin{theorem}\
\begin{enumerate}
\item 
$(Z^\vee_e)_{>0} = \{x \in Z^\vee_e \mid \xi_w(z) > 0 \text{ for all } w \in W_\af/W\}$;
\item
the map $Z^\vee_e \to T^\vee$ restricts to a homeomorphism $(Z^\vee_e)_{>0} \to T^\vee_{>0} \simeq \R_{>0}^r$.
\end{enumerate}
\end{theorem}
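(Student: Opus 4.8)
The plan is to prove the two parts together, using part (2) as the geometric scaffold for part (1). The decisive object is the superpotential/decoration $\f|_{X^\vee_t}\colon X^\vee_t \to \C$, whose fibrewise critical locus is exactly $Z^\vee_e$ by Theorem \ref{thm:critical}; thus $Z^\vee_e \cap X^\vee_t$ is the critical set of $\f|_{X^\vee_t}$, and the map $Z^\vee_e \to T^\vee$ of part (2) is the restriction of the highest weight map $\hw\colon \X^\vee \to T^\vee$ (equivalently, it records the quantum parameters through \eqref{eq:adeta}). Since $\hw$ is $T^\vee$-equivariant and is trivial on the $U^\vee_{>0}$ factor of $\X^\vee_{>0}=T^\vee_{>0}\cdot U^\vee_{>0}$, we have $\hw(\X^\vee_{>0}) = T^\vee_{>0}$, so $\hw$ does restrict to a map $(Z^\vee_e)_{>0} \to T^\vee_{>0}$.

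First I would establish the analytic heart of the argument: for each $t \in T^\vee_{>0}$ the function $\f|_{X^\vee_t}$ has a \emph{unique} critical point $\mathrm{crit}(t)$ in $(X^\vee_t)_{>0}$. In a toric chart $\R_{>0}^\ell \to (X^\vee_t)_{>0}$ the decoration is a positive Laurent polynomial $\f = \sum_k c_k s^{m_k}$ with all $c_k > 0$, as in the explicit formula of Section \ref{sec:gelfand}; in particular its Newton polytope contains the standard basis vectors coming from the linear terms, so the exponent vectors $m_k$ span $\R^\ell$. After the substitution $s_i = e^{u_i}$ the decoration becomes $\sum_k c_k e^{\langle m_k, u\rangle}$, a sum of convex functions, hence convex, and strictly convex since its Hessian $\sum_k c_k e^{\langle m_k, u\rangle}\, m_k m_k^{T}$ is positive definite. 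By Proposition \ref{prop:convergence} it is proper, so it is a proper strictly convex function on $\R^\ell$ and has a unique critical point, its global minimum. By Theorem \ref{thm:critical} this point lies in $Z^\vee_e$, and being totally positive it lies in $(Z^\vee_e)_{>0}$. Part (2) follows formally: $t \mapsto \mathrm{crit}(t)$ is a section of $\hw$, so $\hw\colon (Z^\vee_e)_{>0}\to T^\vee_{>0}$ is surjective, and it is injective because two totally positive points of $Z^\vee_e$ over the same $t$ are both totally positive critical points of $\f|_{X^\vee_t}$, hence equal. It is a homeomorphism because $\hw$ is algebraic and its inverse $\mathrm{crit}$ is continuous, the latter by the implicit function theorem applied to $\nabla \f = 0$ at the nondegenerate minimum.

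For part (1) I would first reduce the comparison of total positivity with Schubert positivity to a statement purely about Schubert classes at critical points. Under the Peterson/Lam--Shimozono isomorphism \cite{LS1} combined with Theorems \ref{thm:Gin} and \ref{thm:Kim}, each affine Schubert class becomes a function on $Z^\vee_e$ satisfying $\xi_x = q^{d}\,\sigma^w_q$ for a monomial $q^d$ in the quantum parameters and a quantum Schubert class $\sigma^w_q$. On $(Z^\vee_e)_{>0}$ the quantum parameters are monomials in the coordinates of $\hw(x)\in T^\vee_{>0}$, hence positive, so $q^d(x) > 0$ there; consequently $\xi_w(x) > 0$ for all $w$ is equivalent to $\sigma^w_q(x) > 0$ for all $w$. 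Thus the inclusion $(Z^\vee_e)_{>0} \subseteq \{x : \xi_w(x) > 0 \text{ for all } w\}$ becomes the assertion that every quantum (equivalently affine) Schubert class is positive at the totally positive critical point $\mathrm{crit}(t)$, while the reverse inclusion becomes the assertion that, among the finitely many complex critical points of $\f|_{X^\vee_t}$ lying over a given $t \in T^\vee_{>0}$ (positivity of the $\xi_w$ forcing the quantum parameters, and hence $t=\hw(x)$, to be totally positive), only $\mathrm{crit}(t)$ is Schubert-positive.

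The hard part, and the main obstacle, is precisely this two-way positivity of the Schubert basis, which the elementary convexity argument for part (2) does not supply. I would approach it through the geometric Satake realization $H_*(\Gr_G,\C) \simeq \O(Z^\vee_e)$ of Theorem \ref{thm:Gin}: one expresses the affine Schubert classes $\xi_w$, viewed as functions on $Z^\vee_e \subset G^\vee$, in terms of the generalized minors that detect Lusztig total positivity on $\X^\vee_{>0}$, and shows these expressions are subtraction-free. Positivity of a subtraction-free function at a totally positive point yields the inclusion $\subseteq$, while recovering the total-positivity coordinates as positive expressions in the $\xi_w$ yields $\supseteq$. A complementary route is to induct on Rietsch's cell decomposition of the totally nonnegative Peterson variety, whose open cell is $(Z^\vee_e)_{>0}\cong \R^r_{>0}$ by part (2), propagating positivity of each Schubert class across the cell structure. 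Either way, the representation-theoretic positivity of the Satake/Schubert basis is the essential ingredient.
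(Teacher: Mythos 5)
Your proposal does not close the main gap: part (1) is never actually proved. Note first that the paper itself offers no proof of this statement --- Section \ref{sec:mirror} opens with ``There are no proofs in this section,'' and the theorem is quoted from \cite{LR} --- so your argument has to stand entirely on its own. But in your treatment of part (1), every step carrying real content is phrased as something ``one shows'': that the affine Schubert classes $\xi_w$, viewed as functions on $Z^\vee_e$, admit subtraction-free expressions in total-positivity coordinates (this would give $\subseteq$), and that conversely Schubert positivity pins down the totally positive point among all complex critical points over a given $t$ (this would give $\supseteq$). These two assertions \emph{are} the theorem. In \cite{LR} the inclusion $\subseteq$ rests on the geometric Satake correspondence behind Theorem \ref{thm:Gin} together with positivity of the Mirkovi\'c--Vilonen basis (matrix coefficients of $G^\vee$-representations in that basis are positive on totally positive elements), while $\supseteq$ requires a separate Perron--Frobenius-type argument: for fixed positive quantum parameters, positivity of the quantum Littlewood--Richardson coefficients forces the Schubert-positive point of $\Spec QH^*(G/B,\C)$ to be unique. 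Neither ingredient is reconstructed, or even concretely identified, in your sketch; ``the representation-theoretic positivity of the Satake/Schubert basis is the essential ingredient'' restates the problem rather than solving it. (Your bookkeeping reduction from $\xi_w$-positivity to $\sigma^w_q$-positivity via $\xi_x = q^d \sigma^w_q$ is fine, but it is only bookkeeping.)

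Your part (2) is more substantive, and it takes a genuinely different route from the one the paper points to: it is essentially Chhaibi's convexity argument for uniqueness of the positive critical point (\cite[Theorem 5.2.11]{Chh}), run in reverse --- the paper instead \emph{deduces} the unique-critical-point statement from part (2) together with Theorem \ref{thm:critical}. But your convexity step has a gap in general type: strict convexity after the substitution $s_i = e^{u_i}$ requires $\f$ to be a Laurent polynomial with \emph{positive coefficients} in the chart. Theorem \ref{thm:posdec} only guarantees that $\f$ is positive in the subtraction-free sense, and a positive rational function need not be convex in exponential coordinates (for instance $1/(1+s) = e^0/(e^0+e^u)$ fails convexity for $u<0$). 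The Laurent property does hold and can be extracted from Berenstein--Kazhdan's formulas for the decoration, but the paper exhibits it only in type $A$ (the Gelfand--Tsetlin formula \eqref{eq:tableau} and the $GL_4$ example), so at the level of generality of the theorem you must prove it rather than appeal to ``the linear terms.'' Granting that, your injectivity/surjectivity/implicit-function-theorem argument for the homeomorphism is correct; it is part (1) that remains unproved.
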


Part (2) was conjectured by Rietsch \cite{RieTP} who established it in type $A$.  Combined with Theorem \ref{thm:critical}, it shows that $\F|_{(X_t)_{>0}}$ has a unique critical point for $t \in T_{>0}$.  Chhaibi \cite[Theorem 5.2.11]{Chh} proved the related result that each the decoration $f|_{X^\vee_t}$ has a unique minimum on $(X^\vee_t)_{>0}$, for $t \in T^\vee_{>0}$.

\end{document}